\documentclass[a4paper,11pt]{article}
\usepackage{amsmath,amssymb,amsthm,MnSymbol}
\usepackage{paralist,enumitem,hyperref,url,bm,placeins}
\usepackage{color,graphicx}
\usepackage[margin=30mm]{geometry}

\newcommand{\bv}{\bm{v}}
\newcommand{\bc}{\bm{c}}

\newcommand{\bz}{\bm{z}}
\newcommand{\bq}{\bm{q}}
\newcommand{\bxi}{\bm{\xi}}
\newcommand{\by}{\bm{y}}

\newcommand{\bmu}{\bm{\mu}}

\newcommand{\bu}{\bm{u}}

\AtBeginDocument{
  \setlength{\abovedisplayskip}{4pt}
  \setlength{\belowdisplayskip}{4pt}
  \setlength{\abovedisplayshortskip}{2pt}
  \setlength{\belowdisplayshortskip}{2pt}
}

\theoremstyle{plain}
\newtheorem{thm}{Theorem}[section]
\newtheorem{prop}[thm]{Proposition}
\newtheorem{lem}[thm]{Lemma}
\newtheorem{cor}[thm]{Corollary}
\newtheorem{remark}{Remark}[section]

\newtheorem{defn}{Definition}[section]

\numberwithin{equation}{section}

\title{Global Finite-Energy Weak Solutions and Sharp Entropy Decay
for a Poisson--Nernst--Planck System with Interspecies Drag and Steric Effects}
\author{Baoli Hao$^1$, Fanze Kong$^2$, Kei Fong Lam$^3$, Chun Liu$^1$} 
\date{ }

\begin{document}

\maketitle
\begin{center}
$^1$ Department of Applied Mathematics, Illinois Institute of Technology, 10 West 35th Street,
Chicago, IL 60616 \\ {\tt bhao2@hawk.illinoistech.edu, cliu124@illinoistech.edu}

\noindent $^2$ Department of Applied Mathematics, University of Washington, 4182 West Stevens Way Northeast, Seattle, WA 98195\\
{\tt fzkong@uw.edu}

\noindent $^3$ Department of Mathematics, Hong Kong Baptist University, 224 Waterloo Road, Kowloon Tong, Hong Kong \\
{\tt akflam@hkbu.edu.hk}
\end{center}

\renewcommand{\thefootnote}{\fnsymbol{footnote}}

\begin{abstract}
We derive and analyze a binary Poisson--Nernst--Planck system with steric interactions and interspecies drag through the energetic variational approach. The steric effects are incorporated into the free energy, while the drag mechanism enters the dissipation functional; eliminating the transport velocities yields a non-diagonal, concentration-dependent Onsager mobility and an entropy-production structure that is not coercive in the standard \(L^2(0,T;H^1)\) sense. For the resulting drag-modified steric PNP system, we prove the existence of global finite-energy weak solutions using an entropy-variable approximation, weighted gradient estimates, and a vacuum-compatible square-root formulation of the weighted entropy gradients. In the pure Neumann equal-mass setting, we establish a sublevel entropy-entropy production inequality, obtain exponential relaxation for approximation-generated weak solutions, and identify the sharp small-sublevel limit of the optimal entropy-production constant through an explicit linearized formula involving the drag mobility, steric Hessian, Poisson coupling, and Neumann spectrum. We further show that the same linearized constant governs the local nonlinear relaxation of sufficiently small strong perturbations of the homogeneous equilibrium. Finally, we discuss the rank-one steric limit and clarify the role of the positive definiteness of the steric matrix in the finite-energy compactness theory.
\end{abstract}

\noindent \textbf{Key words}:  
Poisson--Nernst--Planck systems; steric interactions; interspecies drag; energetic variational approach; finite-energy weak solutions; entropy production; entropy-entropy production inequality; long-time behavior.

\vskip .2cm
\noindent \textbf{AMS subject classification. } 
35Q92; 35K65; 35D30; 35B40; 35A01.

\section{Introduction}
\label{sec:introduction}


The Poisson--Nernst--Planck (PNP) system is a classical continuum model for
charged-particle transport driven by diffusion and electrostatic interaction.
It couples Nernst--Planck drift-diffusion equations for ionic concentrations to the Poisson equation for the electrostatic potential \cite{markowich2012semiconductor,jerome2012analysis}.  PNP-type models arise in semiconductor theory \cite{markowich2012semiconductor,jerome2012analysis},
electrochemistry \cite{rubinstein1990electro}, and biological ion channels \cite{eisenberg1998ionic,schuss2001derivation}. In its classical form, PNP treats ions as point-like species whose transport is governed by ideal mixing entropy and electrostatic interaction. It is therefore most appropriate for describing dilute regimes, where short-range particle interactions beyond electrostatics are negligible.

In concentrated electrolytes, narrow ion channels, and other crowded ionic environments, such short-range particle interactions can no longer be ignored. These steric effects are distinct from electrostatic attraction or repulsion:
they represent finite-size, excluded-volume, or short-range repulsive
interactions between ions, and are modeled here through an excess free-energy
contribution. They may affect ionic transport in two complementary ways. On the conservative
side, finite ion sizes generate steric repulsion and modify the chemical
potentials through excess free-energy contributions
\cite{borukhov1997steric,kilic2007steric,bazant2011double}. In ion-channel
models, such finite-size effects have also been represented through
Lennard--Jones type repulsive interactions and their local approximations
\cite{horng2012pnp,lin2014new}. On the dissipative side, relative motion
between different ionic species produces interspecies friction, a mechanism
closely related to multicomponent diffusion and Maxwell--Stefan type transport
\cite{taylor1993multicomponent,krishna2019diffusing,hsieh2015transport}.
Thus, crowded ionic transport involves both conservative particle interactions, which modify the free energy, and dissipative particle interactions that modify the dissipation mechanism, thereby altering the flux law.

Modified PNP models with steric or finite-size effects have been widely studied;
many of them incorporate conservative particle interactions through modified
free energies, chemical potentials, nonlinear mobilities, or volume-filling
effects
\cite{borukhov1997steric,kilic2007steric,bazant2011double,lin2013poisson,
horng2012pnp,lin2014new}. By contrast, interspecies drag acts at the level of
the dissipation and therefore changes the flux law rather than only the free
energy. The energetic variational approach (EnVarA), rooted in the Onsager
variational principle
\cite{onsager1931reciprocalI,onsager1931reciprocalII}, provides a natural
framework for combining these two mechanisms, since it starts from both a free
energy and a dissipation functional. In the present work, we use this framework
to derive and analyze a PNP-type system in which steric effects are incorporated into
the free energy, while interspecies drag is encoded in the dissipation and
leads to a non-diagonal, concentration-dependent Onsager mobility. The full
derivation is given in Section~\ref{sec:md}. We first recall the classical
variational structure in the nondimensional variables
used throughout the analysis, and then present the modified nondimensional system and its entropy-production identity.

\medskip

\noindent\textbf{Classical PNP Model. }
Let
\(
    \bc=(c_n,c_p)^\top
\)
denote the concentrations of the negatively and positively charged species.
The classical free energy is
\begin{equation}
E_{\rm cl}(\bc)
=
\int_\Omega
\left[
c_n\ln c_n+c_p\ln c_p
+\frac12|\nabla\phi|^2
\right]dx,
\qquad
-\Delta\phi=c_p-c_n .
\label{eq:intro-classical-free-energy}
\end{equation}
Its variational derivatives with respect to $c_n$ and $c_p$, denoted by $\mu_n^{\rm cl}$ and $\mu_p^{\rm cl}$, are given as
\[    \mu_n^{\rm cl}=\ln c_n+1-\phi,
    \qquad
    \mu_p^{\rm cl}=\ln c_p+1+\phi ,
\]
respectively. The classical dissipation contains only species-wise ionic friction,
\begin{equation}
    \mathcal D_{\rm cl}(\bu_n,\bu_p)
    =
    \int_\Omega
    \left(
    \frac1a c_n|\bu_n|^2
    +
    \frac1b c_p|\bu_p|^2
    \right)dx,
    \qquad a,b>0 .
\label{eq:intro-classical-velocity-dissipation}
\end{equation}
The corresponding force-balance relation is
\[
    \bu_n=-a\nabla\mu_n^{\rm cl},
    \qquad
    \bu_p=-b\nabla\mu_p^{\rm cl}.
\]
Together with the continuity equations, this yields the classical PNP system
\begin{subequations}
\begin{align}
    \partial_t\bc
    &=
    \nabla\cdot\bigl(M_{\rm cl}(\bc)\nabla\bmu^{\rm cl}\bigr),
    \\
    -\Delta\phi
    &=
    c_p-c_n ,
\end{align}
\label{eq:intro-classical-pnp}
\end{subequations}
with
\(
    \bmu^{\rm cl}=(\mu_n^{\rm cl},\mu_p^{\rm cl})^\top \) and concentration-dependent mobility matrix
    \(
    M_{\rm cl}(\bc)
    =
    \begin{pmatrix}
    a c_n & 0\\
    0 & b c_p
    \end{pmatrix}.
\)
For smooth positive solutions satisfying no-flux boundary conditions, the following energy identity holds
\[
    \frac{d}{dt}E_{\rm cl}(\bc(t))
    +
    \int_\Omega
    \left(
    a c_n|\nabla\mu_n^{\rm cl}|^2
    +
    b c_p|\nabla\mu_p^{\rm cl}|^2
    \right)dx
    =0 .
\]

\medskip

\noindent\textbf{Modified PNP model with enhanced drag and steric effects. }
The model studied in this paper modifies the preceding energy-dissipation
structure in two key aspects. First, steric interactions are incorporated into the free
energy:
\begin{equation}
E(\bc)
=
E_{\rm cl}(\bc)
+
\int_\Omega
\left[
\frac12 f_{nn}c_n^2
+f_{np}c_nc_p
+\frac12 f_{pp}c_p^2
\right]dx .
\label{eq:intro-free-energy}
\end{equation}
In the main finite-energy theory, the steric matrix
\(
    F=
    \begin{pmatrix}
    f_{nn} & f_{np}\\
    f_{np} & f_{pp}
    \end{pmatrix}
\)
is assumed to be symmetric positive definite. The corresponding
electrochemical potentials are
\[
\mu_n
=
\ln c_n+1-\phi+f_{nn}c_n+f_{np}c_p,
\qquad
\mu_p
=
\ln c_p+1+\phi+f_{np}c_n+f_{pp}c_p .
\]
Second, the dissipation is modified by adding relative drag between the two
ionic species:
\begin{equation}
    \mathcal D(\bu_n,\bu_p)
    =
    \int_\Omega
    \left[
    \frac1a c_n|\bu_n|^2
    +
    \frac1b c_p|\bu_p|^2
    +
    \frac1\delta c_nc_p|\bu_n-\bu_p|^2
    \right]dx,
    \qquad a,b,\delta>0 .
\label{eq:intro-velocity-dissipation}
\end{equation}
Eliminating the velocities from the force-balance relations gives the
non-diagonal Onsager mobility
\begin{equation}
M(\bc)
=
\frac{1}{\omega(\bc)}
\begin{pmatrix}
a c_n(\delta+b c_n)
&
ab c_nc_p
\\[1mm]
ab c_nc_p
&
b c_p(\delta+a c_p)
\end{pmatrix},
\qquad
\omega(\bc)=\delta+a c_p+b c_n .
\label{eq:intro-mobility}
\end{equation}
The resulting nondimensional system is
\begin{subequations}
\begin{align}
\partial_t \bc
&=
\nabla\cdot\bigl(M(\bc)\nabla\bmu\bigr),
\label{eq:intro-system-c}
\\
-\Delta\phi
&=
c_p-c_n .
\label{eq:intro-system-phi}
\end{align}
\label{eq:intro-system}
\end{subequations}
For smooth positive solutions satisfying no-flux boundary conditions, the
system satisfies
\begin{equation}
    \frac{d}{dt}E(\bc(t))
    +
    \mathcal D(\bc(t))
    =0 ,
\label{eq:intro-energy-dissipation}
\end{equation}
where
\begin{equation}
\mathcal D(\bc)
=
\int_\Omega
\nabla\bmu^\top M(\bc)\nabla\bmu\,dx
=
\int_\Omega
\frac{
\delta\bigl(a c_n|\nabla\mu_n|^2+b c_p|\nabla\mu_p|^2\bigr)
+
ab\bigl|c_n\nabla\mu_n+c_p\nabla\mu_p\bigr|^2
}
{\omega(\bc)}
\,dx .
\label{eq:intro-dissipation-decomposition}
\end{equation}
The identity \eqref{eq:intro-dissipation-decomposition} is the starting point
for the weak formulation and a priori estimates below. 

We now discuss the above energy-dissipation structure in the context of existing
analytical results for steric PNP and related cross-diffusion systems. Global
weak theories for steric PNP systems have first been developed for models in
which the cross-diffusion is generated by steric corrections to the chemical
potentials. For the two-species PNP system with local steric effects,
Hsieh~\cite{hsieh2019global} proved global existence by truncating the
diffusion matrix, applying Schauder's fixed-point theorem, and using the energy
inequality to obtain uniform \(L^2\)-bounds. This result is closely related to
the free-energy part of the present model, where in both cases a positive definite
steric interaction matrix provides coercive control of the concentrations. A
second line of work treats steric and ion-transport cross-diffusion systems
through entropy variables and the boundedness-by-entropy method
\cite{jungel2015boundedness,jungel2016entropy}. In this direction,
Gerstenmayer--J\"ungel~\cite{gerstenmayer2018analysis} analyzed a degenerate
cross-diffusion system arising from volume-filling effects, while
Hirvonen--J\"ungel~\cite{hirvonen2024analysis} studied a steric PNP
cross-diffusion system generated by localized Lennard--Jones interactions.
These works provide the closest analytical precedents for both the existence and long-time parts of our analysis. In those systems, however, the main coercive structure is tied directly to the entropy variables generated by the excess chemical potentials. In the present model, part of the coupling is carried instead by the drag-modified mobility \(M(\bc)\) in \eqref{eq:intro-mobility}, or equivalently by the collective dissipation mode in \eqref{eq:intro-dissipation-decomposition}. This distinction affects not only the compactness argument, but also the passage from entropy-entropy production estimates to finite-energy weak limits.

The interspecies drag mechanism itself is motivated by the modified PNP model
of Hsieh--Hyon--Lee--Lin--Liu~\cite{hsieh2015transport}, where an additional
entropy-production term depending on relative ionic velocities is introduced
through the maximum dissipation principle. That work provides the
dissipation-level modeling background for the drag term used here, but its
analysis establishes only local classical solutions by Galerkin's method and
Schauder's fixed-point theorem. Thus, it does not provide the global
finite-energy compactness theory needed for the present drag-modified steric
system. The main analytical task here is to combine the \(L^2\)-coercivity
supplied by the steric free energy with the weaker compactness information
available from the drag-modified entropy production. The first consequence is that the entropy production does not provide the
standard estimate
\(
    c_n,c_p\in L^2(0,T;H^1(\Omega)).
\)
The replacement is the weighted gradient control
\[
    \int_0^T\int_\Omega
    \frac{|\nabla c_n|^2+|\nabla c_p|^2}
         {1+c_n+c_p}
    \,dxdt
    \le C ,
\]
which is proved in Lemmas~\ref{lem:global-entropy-production}
and~\ref{lem:global-weighted-gradient}. Together with the
\(L^\infty(0,T;L^2(\Omega))\)-bound supplied by the positive definite steric
energy, this estimate is the compactness mechanism used in the finite-energy
existence theory. The second issue is the formulation of the weak limit. Entropy variables are a
standard tool in cross-diffusion analysis
\cite{jungel2015boundedness,jungel2016entropy}, and they are also used in the
approximation scheme below. However, finite-energy limits may contain vacuum,
while the electrochemical potentials exhibit a logarithmic dependence on the
concentrations. Thus, the weak formulation cannot be based directly on pointwise
entropy gradients \(\nabla\mu_n\) and \(\nabla\mu_p\). Instead, Definition~\ref{def:finite-energy-weak-solution} uses a square-root formulation of the weighted entropy gradients associated with \eqref{eq:intro-dissipation-decomposition}; the corresponding flux bounds are given in Lemma~\ref{lem:global-flux-estimate}. This formulation preserves the constitutive identification even in the presence of vacuum. The remaining issue in the long-time analysis is instead the relation between the intrinsic finite-energy dissipation and the lower-semicontinuous entropy production entering the sublevel inequality; see Remark~\ref{rem:dissipation-defect}. 
Finally, the positive definiteness of the steric matrix \(F\) is essential in
the main finite-energy theory, since it gives species-wise \(L^2\)-control of
\(c_n\) and \(c_p\). In the rank-one steric limit
\(
    F
    =
    f
    \begin{pmatrix}
    1&1\\
    1&1
    \end{pmatrix},
    \; f>0,
\)
the energy controls only the total density \(c_n+c_p\), while the charge mode
\(c_p-c_n\) still enters the Poisson equation and the individual fluxes. This
loss of species-wise control is the obstruction identified in
Proposition~\ref{prop:rank-one-intro}, and it is why the rank-one case is
treated separately as a degenerate limit.

\medskip

\noindent\textbf{Main Results. } We impose the following assumptions throughout the paper.

\begin{itemize}

\item[(A1)] \textbf{Domain.}
Let \(\Omega\subset\mathbb R^d\), \(d\le3\), be a bounded \(C^2\) domain.
For \(T>0\), we write
\(
    \Omega_T:=\Omega\times(0,T).
\)
Additional smoothness of \(\partial\Omega\) is assumed only in
Subsection~\ref{subsec:small-stability}, where strong solutions near the
homogeneous equilibrium are considered.

\item[(A2)] \textbf{Model parameters and steric matrix.}
The  mobility parameters satisfy
\(
    a>0,\; b>0,\; \delta>0,
\)
and we set
\(
    \omega(\bc):=\delta+a c_p+b c_n,
    \;
    \bc=(c_n,c_p)^\top,
    \;
    \bmu=(\mu_n,\mu_p)^\top.
\)
For the main finite-energy and long-time results, the steric interaction
matrix
\(
    F=
    \begin{pmatrix}
    f_{nn} & f_{np}\\
    f_{np} & f_{pp}
    \end{pmatrix}
\)
is symmetric positive definite. We denote its smallest eigenvalue by
\(\alpha_F>0\).

\item[(A3)] \textbf{Boundary conditions and Poisson realization.}
The ionic fluxes satisfy the no-flux boundary conditions
\(
    (M(\bc)\nabla\bmu)_i\cdot\nu=0,
    \; i=n,p
     \text{ on }\partial\Omega,
\)
with unit outward normal vector $\nu$. For the electrostatic potential, we fix a self-adjoint nonnegative Poisson
realization
\(
    \phi=\mathcal P\rho,
    \;
    \rho:=c_p-c_n,
    \;
    \rho\in\mathcal X_\Phi\subset L^2(\Omega),
\)
where \(\phi\) solves \(-\Delta\phi=\rho\) with homogeneous Dirichlet or Neumann boundary
conditions. In the homogeneous Dirichlet case we take \(\mathcal X_\Phi = L^2(\Omega)\), while in the homogeneous Neumann case we take \(\mathcal X_\Phi = L^2_0(\Omega) := \{ g \in L^2(\Omega) \, : \, \int_\Omega g \, dx = 0\}\) and supplement the additional requirement that \(\int_\Omega \mathcal P \rho \, dx = 0\). We assume
\[
    \|\mathcal P \rho\|_{H^2(\Omega)}
    +
    \|\nabla (\mathcal P \rho)\|_{L^6(\Omega)}
    \le
    C_\phi\|\rho\|_{L^2(\Omega)}
\]
holds with a positive constant $C_\phi$ independent of $\rho \in \mathcal X_\Phi$, and for all \(\rho,\eta\in\mathcal X_\Phi\),
\[
    \int_\Omega \nabla(\mathcal P\rho)\cdot\nabla(\mathcal P\eta)\,dx
    =
    \int_\Omega \rho\,\mathcal P\eta\,dx
    =
    \int_\Omega \eta\,\mathcal P\rho\,dx .
\]
In particular, \(\mathcal P\) is nonnegative. 

\item[(A4)] \textbf{Initial data.}
The initial concentrations satisfy
\(   c_{n,0},c_{p,0}\ge0,
    \;
    c_{n,0},c_{p,0}\in L^2(\Omega),
\)
and have finite entropy:
\[
    \int_\Omega c_{n,0}\ln c_{n,0}\,dx
    +
    \int_\Omega c_{p,0}\ln c_{p,0}\,dx
    <\infty .
\]
The initial charge
\(    \rho_0:=c_{p,0}-c_{n,0}
\)
belongs to the admissible charge space \(\mathcal X_\Phi\) in
\textup{(A3)}. In the homogeneous Neumann case this means
\(
    \int_\Omega c_{n,0}\,dx
    =
    \int_\Omega c_{p,0}\,dx .
\)

\end{itemize}
For the long-time behavior and sharp-rate results, we impose one additional
assumption.

\begin{itemize}

\item[(A5)] \textbf{Pure Neumann equal-mass setting.}
We specialize the Poisson realization in \textup{(A3)} to the homogeneous
Neumann case and assume that \(\Omega\) is connected. The conserved masses of
the two species are equal and positive:
\(
    \int_\Omega c_{n,0}\,dx
    =
    \int_\Omega c_{p,0}\,dx
    =
    m|\Omega|,
    \;
    m>0 .
\)
Under the no-flux boundary conditions, these masses remain conserved in time.
The corresponding homogeneous equilibrium is given by the pair
\(
    \bc^\infty=(m,m)^\top,
    \;
    \phi^\infty=0 .
\)

\end{itemize}

The rank-one steric case is not included in \textup{(A2)} and is treated
separately as a degenerate limit. There the positive definiteness condition is
replaced by
\(
    F=
    f
    \begin{pmatrix}
    1&1\\
    1&1
    \end{pmatrix},
    \; f>0.
\)

Our first main result is the existence of global finite-energy weak solutions. The
formulation uses a vacuum-compatible square-root representation of the weighted entropy gradients rather than the singular quantities $\nabla \mu_i$ themselves, which is essential because finite-energy solutions may contain
vacuum.

\begin{thm}[Global finite-energy weak solutions]
\label{thm:global-weak-intro}
Assume \textup{(A1)}--\textup{(A4)}. Then, for every \(T>0\), there exists a
finite-energy weak solution \((c_n,c_p,\phi)\) on  $(0,T)$ in the sense of
Definition~\ref{def:finite-energy-weak-solution}. In particular,
\[
c_n,c_p\ge0,
\qquad
c_n,c_p\in
L^\infty(0,T;L^2(\Omega))
\cap
L^{4/3}(0,T;W^{1,4/3}(\Omega)),
\]
and
\[
\partial_t c_n,\partial_t c_p
\in
L^{4/3}(0,T;W^{1,4}(\Omega)').
\]
The Poisson equation
\(
-\Delta\phi=c_p-c_n
\)
holds in the weak sense, with the boundary conditions specified in
\textup{(A3)}. 
Moreover, the weighted entropy-gradient fields \(\boldsymbol A_n,\boldsymbol A_p\) defined by \eqref{eq:weak-weighted-entropy-gradients}, together with the collective dissipation field \(\boldsymbol B\) defined by \eqref{eq:weak-collective-field}, satisfy \( \boldsymbol A_n,\boldsymbol A_p,\boldsymbol B \in L^2(\Omega_T;\mathbb R^d). \) The corresponding physical fluxes \(\boldsymbol J_n,\boldsymbol J_p\) in \eqref{eq:weak-physical-fluxes} belong to \(L^{4/3}(\Omega_T;\mathbb R^d)\).
The initial data are attained
in \(W^{1,4}(\Omega)'\), and the energy inequality
\[
E(\bc(t))
+
\int_0^t\int_\Omega
\bigl(
|\boldsymbol A_n|^2
+
|\boldsymbol A_p|^2
+
|\boldsymbol B|^2
\bigr)
\,dxds
\le
E(\bc_0)
\]
holds for a.e. \(t\in(0,T)\).
\end{thm}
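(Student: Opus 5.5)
The plan is an entropy-variable approximation followed by a compactness limit. The a priori estimates would be those of Lemmas~\ref{lem:global-entropy-production}, \ref{lem:global-weighted-gradient} and \ref{lem:global-flux-estimate}.

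\emph{Approximate problems.} For $\varepsilon>0$, I regularize in the entropy variable $\bmu$ rather than in $\bc$. Following the boundedness-by-entropy method, I discretize time implicitly (step $\tau$). At each step I solve for $\bmu^k\in H^m(\Omega;\mathbb R^2)$, $m>d/2$, the problem
\[
\frac1\tau\int_\Omega(\bc(\bmu^k)-\bc^{k-1})\cdot\bm\xi\,dx+\int_\Omega\nabla\bm\xi^\top M(\bc(\bmu^k))\nabla\bmu^k\,dx+\varepsilon\langle\bmu^k,\bm\xi\rangle_{H^m}=0 .
\]
Here $\bc(\bmu)$ inverts $\bmu=\ln\bc+1+F\bc+(-\phi,\phi)$ with $\phi=\mathcal P(c_p-c_n)$. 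This inversion is well defined and gives $\bc>0$, because the energy $E$ is strictly convex: the Hessian of the entropy plus $F$ is positive definite, and $\mathcal P$ is nonnegative and self-adjoint by (A3). Since $\bmu^k\in L^\infty$, the concentrations are bounded away from zero, so $M(\bc)$ is uniformly positive definite. Existence at each step then follows from Leray--Schauder, with $\varepsilon\|\bmu\|_{H^m}^2$ giving coercivity. Testing with $\bm\xi=\bmu^k$ and using convexity of $E$ yields the discrete energy inequality
\[
E(\bc^k)+\tau\int_\Omega\nabla\bmu^{k\top}M\nabla\bmu^k\,dx+\varepsilon\tau\|\bmu^k\|_{H^m}^2\le E(\bc^{k-1}).
\]
The dissipation term is exactly the decomposition \eqref{eq:intro-dissipation-decomposition}, that is $|\boldsymbol A_n|^2+|\boldsymbol A_p|^2+|\boldsymbol B|^2$ with $\boldsymbol A_i=\sqrt{\delta a_i c_i/\omega}\,\nabla\mu_i$ and $\boldsymbol B=\sqrt{ab/\omega}\,(c_n\nabla\mu_n+c_p\nabla\mu_p)$. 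Initial data with vacuum would be handled by adding a small constant and mollifying; this changes $E(\bc_0)$ only slightly and preserves the Neumann equal-mass condition.

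\emph{Uniform bounds.} Positive definiteness of $F$ with constant $\alpha_F$, together with $s\ln s\ge -1/e$ and $\mathcal P\ge0$, turns the energy bound into an $L^\infty(0,T;L^2)$ bound on $\bc$. I then invoke Lemma~\ref{lem:global-entropy-production} and Lemma~\ref{lem:global-weighted-gradient} to bound
\[
\iint\frac{|\nabla c_n|^2+|\nabla c_p|^2}{1+c_n+c_p}\,dxdt .
\]
The idea behind these lemmas is that $c_i\nabla\mu_i=\nabla c_i+c_i\nabla(F\bc)_i\pm c_i\nabla\phi$ can be recovered from the weighted fields with weights bounded below by $c/(1+|\bc|)$. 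Here $\nabla\phi\in L^\infty(L^6)$ by the $C_\phi$ estimate, and the steric terms are absorbed using positive definiteness.

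Next, by Cauchy--Schwarz, $\nabla c_i=\sqrt{1+|\bc|}\cdot\nabla c_i/\sqrt{1+|\bc|}$. The first factor lies in $L^\infty(L^4)$ and the second in $L^2(\Omega_T)$, so $\nabla c_i\in L^2(L^{4/3})$, which gives $c_i\in L^{4/3}(0,T;W^{1,4/3})$. For the fluxes, Lemma~\ref{lem:global-flux-estimate} applies: $\boldsymbol J_i$ is a bounded-coefficient combination of $\sqrt{c_i\omega}\,\boldsymbol A_i$ and $\sqrt{c_i\omega}\,\boldsymbol B$ (up to the factor $c_i/\omega\le C$), and $\sqrt{c_i\omega}\in L^\infty(L^2)$, hence $\boldsymbol J_i\in L^{4/3}$ in a suitable space--time sense. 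Duality then gives $\partial_t c_i\in L^{4/3}(W^{1,4\prime})$, and the $\varepsilon$-term contributes an error of order $\sqrt\varepsilon$ in $L^2(H^{m\prime})$.

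\emph{Limit and identification (main obstacle).} As $(\tau,\varepsilon)\to0$, an Aubin--Lions (Dubinskii) argument, using the $W^{1,4/3}$ bound and the time-derivative bound, gives strong convergence of $\bc$ in $L^p(\Omega_T)$ for $p<2$; combined with the $L^\infty L^2$ bound this also gives $\phi\to\phi$ strongly. The weighted fields satisfy $\boldsymbol A_i\rightharpoonup\boldsymbol A_i$ and $\boldsymbol B\rightharpoonup\boldsymbol B$ weakly in $L^2$. The hard step is to identify these limits \emph{without} using $\nabla\mu_i$, since vacuum may occur. I will rewrite them vacuum-compatibly, e.g.
\[
\sqrt{c_i}\,\nabla\mu_i\cdot(\text{weight})=2\nabla\sqrt{c_i}\,(\text{weight})+\sqrt{c_i}\,(\text{weight})\nabla\bigl((F\bc)_i\pm\phi\bigr).
\]
The task is then to show that products of strongly convergent weights and concentrations with weakly convergent gradients such as $\nabla\sqrt{c_i}$ and $\nabla c_i$ pass to the limit. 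For the steric term $\sqrt{c_i}\nabla c_j$ I would first try to write it as $\sqrt{c_i}\sqrt{1+|\bc|}$ (strongly convergent in $L^2$-type spaces after truncation) times $\nabla c_j/\sqrt{1+|\bc|}$ (weakly convergent). If needed, I would truncate with cutoffs $T_k(c)$ and let $k\to\infty$ using the equi-integrability provided by the $L^\infty L^2$ bound. Passing to the limit in $\boldsymbol J_i$ likewise gives the limit fluxes \eqref{eq:weak-physical-fluxes} and hence the weak equation. Finally, the energy inequality follows from weak lower semicontinuity of the $L^2$ norms of $\boldsymbol A_i$ and $\boldsymbol B$, together with convexity and strong convergence for $E(\bc(t))$ at a.e. $t$. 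Attainment of the initial data in $W^{1,4}(\Omega)'$ comes from the time-derivative bound.
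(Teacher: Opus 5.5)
Your architecture (implicit Euler in the entropy variables with a high-order regularization, Leray--Schauder, the discrete entropy inequality, the weighted gradient bound, Aubin--Lions, weak lower semicontinuity) matches the paper's, and your a priori bounds are fine. Your H\"older splitting even gives $\nabla c_i\in L^2(0,T;L^{4/3}(\Omega))$, slightly better in time than the paper's bound.

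The gap is in the step you flag as the main obstacle. You want to identify $\boldsymbol A_i$ through $w_i\bigl(2\nabla\sqrt{c_i}+\sqrt{c_i}\,\nabla((F\bc)_i\pm\phi)\bigr)$, with $w_i=\sqrt{\delta a_i/\omega(\bc)}$. You plan to pass to the limit in products of strongly convergent weights with ``weakly convergent gradients such as $\nabla\sqrt{c_i}$''. But none of the estimates you list bounds $\nabla\sqrt{c_{i,\tau}}$.
\begin{itemize}
\item The entropy production only yields $\iint|\nabla c|^2/(1+c_n+c_p)$, which contains no Fisher information near vacuum.
\item So there is no weak limit of $\nabla\sqrt{c_{i,\tau}}$ for your product argument to use. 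Any such bound would have to be extracted from $\boldsymbol A_i$ itself, against the growing factor $1/w_i\sim\sqrt{\omega(\bc)}$.
\item Your truncation $T_k(c)$ deals with large values of $c$, not with the degeneracy at $c_i=0$.
\item For the same reason, your plan never produces the regularity $\chi_i\in L^{4/3}(0,T;W^{1,4/3}(\Omega))$ that Definition~\ref{def:finite-energy-weak-solution} requires.
\end{itemize}

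The missing idea is to put the weight inside the square root and work with the bounded quantities $\chi_n=\sqrt{\delta a c_n/\omega(\bc)}\le\sqrt{\delta a/b}$ and $\chi_p=\sqrt{\delta b c_p/\omega(\bc)}\le\sqrt{\delta b/a}$. At the approximate level, where $c_i$ is bounded below, the chain rule gives
\[
2\nabla\chi_n=\boldsymbol A_n-\chi_n\nabla\bigl(\log\omega(\bc)-\phi+f_{nn}c_n+f_{np}c_p\bigr),
\]
and analogously for $\chi_p$. Since $|\nabla\log\omega(\bc)|\le(b|\nabla c_n|+a|\nabla c_p|)/\delta$, the right-hand side is bounded in $L^{4/3}(\Omega_T)$. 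This uses the dissipation bound, the $W^{1,4/3}$ bound on $\bc$, and the bound on $\nabla\phi$. Hence:
\begin{itemize}
\item $\chi_{i,\tau}$ is bounded in $L^{4/3}(0,T;W^{1,4/3}(\Omega))$ and $\nabla\chi_{i,\tau}\rightharpoonup\nabla\chi_i$.
\item Every remaining term is a bounded, a.e.\ convergent coefficient times either a weakly convergent $\nabla c_j$ or a strongly convergent $\nabla\phi$.
\item This identifies $\boldsymbol A_i$, then $\boldsymbol B$ and $\boldsymbol J_i$, including on vacuum sets.
\end{itemize}

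Two smaller points.
\begin{itemize}
\item In the pure Neumann case your regularization $\varepsilon\langle\bmu^k,\bxi\rangle_{H^m}$ acts on constants. So the discrete masses are not conserved, $\int_\Omega(c_p^k-c_n^k)\,dx=0$ may fail, and $\phi=\mathcal P(c_p-c_n)$ is then undefined. You need the paper's mass-constrained version: zero-mean test functions, with the additive constants of $\mu_i^k$ fixed by the masses.
\item The flux coefficients are bounded by $C\sqrt{c_i}$, not by $\sqrt{c_i\,\omega(\bc)}$. With the latter you would only get $\boldsymbol J_i\in L^2(0,T;L^1(\Omega))$.
\end{itemize}
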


The proof is based on an entropy-variable implicit Euler approximation. The positive definite steric energy and the weighted gradient estimate extracted from the entropy production provide compactness of the concentrations. The square-root formulation then identifies the limiting weighted entropy gradients, including on vacuum sets, and allows passage to the limit in the nonlinear physical fluxes.

The second part of the paper concerns long-time behavior in the pure Neumann
equal-mass setting \textup{(A5)}. In this case the homogeneous state
\(
    \bc^\infty=(m,m)^\top,
    \;
    \phi^\infty=0
\)
is the equilibrium. We prove an entropy-entropy production inequality on
bounded energy sublevels and use it to obtain exponential relaxation for the
finite-energy weak solutions generated by the mass-preserving approximation.


\begin{thm}[Sublevel entropy-entropy production and decay]
\label{thm:eep-decay-intro}
Assume \textup{(A1)}--\textup{(A5)}. For every \(E_0>E(\bc^\infty)\), there
exists a constant \(\lambda(E_0)>0\) such that every admissible positive state
\(\bc\) satisfying the mass constraints and
\(
    E(\bc)\le E_0
\)
obeys
\[
    \mathcal D(\bc)
    \ge
    \lambda(E_0)\bigl(E(\bc)-E(\bc^\infty)\bigr).
\]
Consequently, if \((\bc,\phi)\) is a finite-energy weak solution obtained as a
limit of the mass-preserving entropy approximation, then, for every
\(E_*>E(\bc_0)\),
\[
    E(\bc(t))-E(\bc^\infty)
    \le
    e^{-\lambda(E_*)t}
    \bigl(E(\bc_0)-E(\bc^\infty)\bigr)
\]
for a.e. \(t>0\), where \(\lambda(E_*)\) is the sublevel
entropy-entropy production constant corresponding to the energy level
\(E_*\).
\end{thm}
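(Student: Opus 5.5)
The proof has two parts: a functional inequality on a sublevel set, and a Gronwall argument along approximations that is then passed to the limit. For the inequality I would first bound the drag-modified production $\mathcal D$ from below by a classical, species-wise weighted production. From \eqref{eq:intro-dissipation-decomposition}, dropping the nonnegative collective term gives
\[
\mathcal D(\bc)\ \ge\ \int_\Omega \frac{\delta}{\omega(\bc)}\bigl(a c_n|\nabla\mu_n|^2+b c_p|\nabla\mu_p|^2\bigr)\,dx .
\]
The weight $\delta/\omega(\bc)$ degenerates only where $c_n+c_p$ is large.

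Next I would write the relative energy as a Bregman distance. Using mass conservation and $\int_\Omega\phi\,dx=0$,
\[
E(\bc)-E(\bc^\infty)=\int_\Omega\Bigl[\sum_i\bigl(c_i\ln\tfrac{c_i}{m}-c_i+m\bigr)+\tfrac12(\bc-\bc^\infty)^\top F(\bc-\bc^\infty)+\tfrac12|\nabla\phi|^2\Bigr]dx .
\]
Convexity of the integrand in $(\bc,\rho)$ gives $E(\bc)-E(\bc^\infty)\le\int_\Omega(\bmu-\bar\bmu)\cdot(\bc-\bc^\infty)\,dx$, where $\bar\bmu$ denotes the spatial means of $\mu_n,\mu_p$. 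This is legitimate because $\int_\Omega(\bc-\bc^\infty)\,dx=0$.

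To bound the right-hand side by the production I would use a weighted Poincaré-type argument. Write $c_i-m=\int_0^1 \frac{d}{ds}$ along the chemical potential, or more simply use $|c_i-m|\lesssim$ a function of $\mu_i$. My preferred route is to bound $\int_\Omega(\mu_i-\bar\mu_i)(c_i-m)\,dx$ by Cauchy--Schwarz as
\[
\Bigl\|\sqrt{\tfrac{c_i}{\omega}}\,\nabla\mu_i\Bigr\|_{L^2}\cdot\Bigl\|\sqrt{\tfrac{\omega}{c_i}}\,\nabla\Delta_N^{-1}(c_i-m)\Bigr\|_{L^2}.
\]
This needs a lower bound on $c_i$, which fails near vacuum. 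I therefore expect a direct route to fail and would fall back on a compactness/contradiction argument, which is natural given that $\lambda$ depends only on $E_0$.

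In that argument, suppose there are states $\bc^k$ with $E(\bc^k)\le E_0$ and $\mathcal D(\bc^k)<\frac1k\bigl(E(\bc^k)-E(\bc^\infty)\bigr)$. There are two cases. If $E(\bc^k)-E(\bc^\infty)\not\to0$, then $\mathcal D(\bc^k)\to0$. The sublevel bound gives $L^2$ control via $\alpha_F$, and the weighted gradient estimate of Lemma~\ref{lem:global-weighted-gradient}, in its static form $\int_\Omega |\nabla c_i|^2/(1+c_n+c_p)\,dx\lesssim \mathcal D+1$, gives compactness in $L^1$. Lower semicontinuity of the square-root fields then shows the limit has zero production. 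A zero-production state has $\sqrt{c_i}\,\nabla\mu_i=0$; the physical content of this is that $c_i$ is constant on its support, and connectedness together with the steric and Poisson structure forces $\bc=\bc^\infty$. This contradicts the energy gap being bounded away from zero, since the energy is continuous under strong $L^2$ convergence plus the $L^2$ bound.

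The small case $E(\bc^k)-E(\bc^\infty)\to0$ requires a linearization. Set $\bc^k=\bc^\infty+\varepsilon_k\bm w^k$ with $\varepsilon_k^2\sim E(\bc^k)-E(\bc^\infty)$. The quotient then converges to the Rayleigh quotient of the linearized operator $M(\bc^\infty)$ composed with the Hessian $\frac1m I+F+$ the Poisson part, on mean-zero fields. Its infimum is positive by the Neumann spectral gap, and this is the explicit constant identified later in the paper. The main obstacle is justifying this linearization under only the weak sublevel control, because of vacuum and large values. I would first use a Csiszár--Kullback plus $L^2$ estimate to show that $\bc^k\to\bc^\infty$ in $L^2$. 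I would then handle the logarithmic nonlinearity by splitting the domain into the set where $|c_i/m-1|$ is small and its complement, where the production is bounded below by a fixed multiple of the entropy contribution.

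For the decay statement I would apply the inequality to the mass-preserving approximate solutions, which are positive and satisfy the discrete energy inequality $E(\bc^{k})+\tau\mathcal D(\bc^k)\le E(\bc^{k-1})$. Since the energy decreases, all iterates remain in the sublevel $E_*$. The inequality then gives $(1+\tau\lambda)(E^k-E^\infty)\le E^{k-1}-E^\infty$, which iterates to a discrete exponential. Passing $\tau\to0$ using lower semicontinuity of $E$ yields the bound for a.e.\ $t$. This route is why the result is restricted to approximation-generated solutions.
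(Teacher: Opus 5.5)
Your overall architecture matches the paper's: a contradiction argument split into a small-entropy case and a zero-dissipation case, followed by the discrete iteration $(1+\tau\lambda)^{-k}$ along the mass-preserving scheme. The decay part is fine. The genuine gap is in the small case $\varepsilon_k^2:=\mathcal H(\bc^k|\bc^\infty)\to0$.

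\textbf{The small-entropy case.} Showing $\bc^k\to\bc^\infty$ in $L^2$ is free from \eqref{eq:relative-entropy-controls-L2}, but it is not what is needed. What you need is \emph{strong} $L^2$ compactness of the normalized perturbations $\bq^k=(\bc^k-\bc^\infty)/\varepsilon_k$ (your $\bm w^k$), which a priori are only bounded in $L^2$. Without it the limit may satisfy $\mathcal H_{\rm lin}(\bq)<1$, even $\bq=0$; the normalization is lost, and neither your Rayleigh-quotient bound nor any contradiction follows.

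The missing idea is to use the entropy-production estimate at scale $\varepsilon_k^2$ in its sharp form
\[
\int_\Omega\frac{|\nabla c_n^k|^2+|\nabla c_p^k|^2}{1+c_n^k+c_p^k}\,dx
\le C\mathcal D(\bc^k)+C\int_\Omega(1+c_n^k+c_p^k)|\nabla\phi^k|^2\,dx .
\]
The version $\lesssim\mathcal D+1$ you quote is adequate in the large case but blows up after division by $\varepsilon_k^2$. Since $\phi^k$ is linear in the perturbation, $\|\nabla\phi^k\|_{L^6}\le C\varepsilon_k$, so the electrostatic remainder is $O(\varepsilon_k^2)$. Combined with $\mathcal D(\bc^k)=O(\varepsilon_k^2)$, this gives $\int_\Omega|\nabla\bq^k|^2/(1+c_n^k+c_p^k)\,dx\le C$. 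Hence $\bq^k$ is bounded in $W^{1,4/3}$ and compact in $L^2$; this is the step in Lemmas~\ref{lem:local-eep} and~\ref{lem:linearized-dissipation-liminf}.

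Your treatment of the bad set also fails as stated. $\mathcal D$ is a gradient functional, so there is no lower bound of the production on $\Omega\setminus G_k$ by the entropy carried there; such a bound would be as hard as the theorem itself. The working mechanism is the reverse:
\begin{itemize}
\item On the dissipation side, discard $\Omega\setminus G_k$ by nonnegativity and work with $\mathbf 1_{G_k}\nabla\boldsymbol\Theta^k$. There $M(\bc^k)\to M_0$ and $H(\bc^k)\to H_0$ uniformly.
\item On the entropy side, the bad set contributes $o(1)$. This is because $s\ln(s/m)-s+m\le C(s-m)^2$ for $s\ge0$, $|\Omega\setminus G_k|\to0$, and $|\bq^k|^2$ is uniformly integrable. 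The last point again comes from the compactness above.
\end{itemize}
The paper's Lemma~\ref{lem:local-eep} even avoids the dissipation liminf. It passes to the limit in $\varepsilon_k^{-1}c_i^k\nabla\mu_i^k\to0$ in $L^1$, obtains $H_0\bq+\bz\psi=0$ and hence $\bq=0$, contradicting $\mathcal H_{\rm lin}(\bq)=1$. Your Rayleigh-quotient route is also valid once compactness is available, and it yields more.

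\textbf{The zero-dissipation case.} Here $\sqrt{c_i}\,\nabla\mu_i=0$ means that $\mu_i$, not $c_i$, is constant on $\{c_i>0\}$, and by itself this does not exclude vacuum regions. The paper closes this as follows:
\begin{itemize}
\item It inverts $I+C(\bc)F$, where $C(\bc)=\operatorname{diag}(c_n,c_p)$, with bounded coefficients. This gives the global identity $\nabla\bc=-(I+C(\bc)F)^{-1}C(\bc)\bz\nabla\phi$ in $\mathcal D'(\Omega)$.
\item Hence $\bc\in H^1$, and $c_i e^{z_i\phi+(F\bc)_i}$ is constant on the connected domain, with $z_n=-1$, $z_p=1$.
\item So the limit is a constrained critical point, and Lemma~\ref{lem:equilibrium-unique} identifies it as $\bc^\infty$.
\end{itemize}
Also, you need compactness in $L^2$, not just $L^1$, for $E(\bc^k)\to E(\bc^\infty)$. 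This follows from the compact embedding $W^{1,4/3}(\Omega)\hookrightarrow L^2(\Omega)$ for $d\le3$.
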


The approximation qualification in Theorem~\ref{thm:eep-decay-intro} is not due to a loss of constitutive information. Rather, the sublevel entropy-entropy production inequality is proved for positive states and first applied along the mass-preserving entropy approximation. Its extension to general finite-energy weak solutions requires a no-defect relation between the finite-energy dissipation associated with \(\boldsymbol A_n,\boldsymbol A_p,\boldsymbol B\) and the lower-semicontinuous extension of \(\mathcal D\); see Remark~\ref{rem:dissipation-defect}.

The next result identifies the sharp small-sublevel limit of the optimal
constants in the entropy-entropy production inequality. Let
\(\{\nu_k\}_{k\ge1}\) be the nonzero Neumann eigenvalues of \(-\Delta\),
let
\(
    M_0:=M(\bc^\infty) \) be the Onsager mobility associated to $\bc^\infty = (m,m)^{\top}$, where \(m = \frac{1}{|\Omega|} \int_\Omega c_{n,0} \, dx = \frac{1}{|\Omega|} \int_{\Omega} c_{p,0} \, dx\),
    \(
    H_0:=m^{-1}I+F\),
    and \(
    \bz:=(-1,1)^\top .
\)
For \(E_0>E(\bc^\infty)\), define
\(
    \Lambda(E_0)
    :=
    \inf
    \left\{
    \frac{\mathcal D(\bc)}
    {E(\bc)-E(\bc^\infty)}
    :
    \begin{array}{l}
    \bc \ \text{admissible},\;
    E(\bc)\le E_0, \;
    \bc\ne \bc^\infty
    \end{array}
    \right\}.
\)

\begin{thm}[Sharp small-sublevel limit]
\label{thm:sharp-constant-intro}
Under the assumptions of Theorem~\ref{thm:eep-decay-intro},
\[
    \lim_{E_0\downarrow E(\bc^\infty)}
    \Lambda(E_0)
    =
    \lambda_{\rm lin},
\]
where
\[
    \lambda_{\rm lin}
    =
    2\inf_{k\ge1} \left (
    \nu_k\,
    \lambda_{\min}
    \left(
    M_0
    \left(
    H_0+\nu_k^{-1}\bz\otimes\bz
    \right)
    \right) \right ).
\]
Here \(\lambda_{\min}(M_0A)\) denotes the smallest eigenvalue of the
symmetric matrix \(A^{1/2}M_0A^{1/2}\), with
\(A=H_0+\nu_k^{-1}\bz\otimes\bz\).
\end{thm}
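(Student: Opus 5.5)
The plan is to prove matching upper and lower bounds for \(\liminf\) and \(\limsup\) of \(\Lambda(E_0)\) as \(E_0\downarrow E(\bc^\infty)\). Both bounds rest on a second-order expansion around the equilibrium. Write \(\bc=\bc^\infty+\bm{v}\) with \(\int_\Omega \bm{v}\,dx=0\) componentwise, so that \(\rho=\bz\cdot\bm{v}\). The relative energy expands as
\[
E(\bc)-E(\bc^\infty)=\tfrac12\int_\Omega \bm{v}^\top H_0\bm{v}\,dx+\tfrac12\int_\Omega \rho\,\mathcal P\rho\,dx+R_E(\bm{v}),
\]
where the linear term vanishes by mass conservation and \(\int\mathcal P\rho=0\). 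The remainder \(R_E\) comes only from the entropy part \(s\ln s\), because the steric and Poisson parts are exactly quadratic. Likewise \(\nabla\bmu=H_0\nabla\bm{v}+\bz\nabla\mathcal P\rho+(\text{nonlinear})\) and \(M(\bc)=M_0+O(|\bm{v}|)\). In Neumann eigenmodes \(\bm{v}=\sum_k \hat{\bm v}_k e_k\) we have \(\mathcal P e_k=\nu_k^{-1}e_k\). The quadratic energy is therefore \(\tfrac12\sum_k \hat{\bm v}_k^\top A_k\hat{\bm v}_k\) with \(A_k=H_0+\nu_k^{-1}\bz\otimes\bz\). The linearized dissipation is \(\sum_k\nu_k (A_k\hat{\bm v}_k)^\top M_0 (A_k\hat{\bm v}_k)\). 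Setting \(\bm w=A_k^{1/2}\hat{\bm v}_k\) turns the \(k\)-th Rayleigh quotient into \(2\nu_k\,\bm w^\top A_k^{1/2}M_0A_k^{1/2}\bm w/|\bm w|^2\). Hence the infimum of the linearized ratio over mean-zero \(\bm v\) equals \(\lambda_{\rm lin}\). The infimum over \(k\) is positive, since \(\nu_k\ge\nu_1\) and \(A_k\ge H_0>0\), and \(M_0\) is positive definite.

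\textbf{Upper bound.} Fix \(k\) and an eigenvector \(\bm w\) attaining \(\lambda_{\min}\), and test with \(\bc_\varepsilon=\bc^\infty+\varepsilon A_k^{-1/2}\bm w\,e_k\). The eigenfunction \(e_k\) is smooth up to the boundary because \(\Omega\) is \(C^2\), so \(\bc_\varepsilon\) is positive and admissible for small \(\varepsilon\). Dominated Taylor expansion gives \(E(\bc_\varepsilon)-E(\bc^\infty)=\varepsilon^2 Q_k+O(\varepsilon^3)\) and \(\mathcal D(\bc_\varepsilon)=\varepsilon^2 D_k+O(\varepsilon^3)\). Given \(E_0\), choose \(\varepsilon\) small enough that \(E(\bc_\varepsilon)\le E_0\). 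This yields \(\limsup\Lambda(E_0)\le D_k/Q_k\), and taking the infimum over \(k\) gives \(\le\lambda_{\rm lin}\).

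\textbf{Lower bound.} This is the main obstacle. Take \(E_j\downarrow E(\bc^\infty)\) and near-minimizers \(\bc_j\) with \(\mathcal D(\bc_j)\le(\Lambda(E_j)+1/j)(E(\bc_j)-E(\bc^\infty))\). By Theorem~\ref{thm:eep-decay-intro}, \(\Lambda(E_j)\) is bounded, though we only need \(\limsup\) finite along the sequence. Since \(E(\bc_j)\to E(\bc^\infty)\), strict convexity together with a Csiszár--Kullback-type argument and the \(L^2\)-coercivity from \(\alpha_F\) give \(\bc_j\to\bc^\infty\) in \(L^2\).

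The difficulty is that \(L^2\) smallness does not control the nonlinear remainders uniformly. I would normalize \(\varepsilon_j^2:=E(\bc_j)-E(\bc^\infty)\) and work with the variables \(\bm\xi_j:=\nabla\bmu_j/\varepsilon_j\). Two approaches are available.

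\emph{Energy side.} I would write the relative energy exactly as \(\int\Psi(\bc_j|\bc^\infty)+\tfrac12\int\bm v^\top F\bm v+\tfrac12\int\rho\mathcal P\rho\). Here \(\Psi(s|m)=s\ln(s/m)-s+m\ge\) a quadratic on \(\{s\le Km\}\) and \(\ge\) linear beyond. Combined with a uniform lower bound on \(\omega\) and \(M(\bc)\ge c\,\mathrm{diag}(c_n,c_p)/(1+c_n+c_p)\), a weighted Poincaré inequality gives \(\|\bm v_j/\varepsilon_j\|\) bounded and compact in \(L^1\). This compactness uses the weighted gradient control as in Lemma~\ref{lem:global-weighted-gradient}.

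\emph{Alternative.} Bound \(\mathcal D\) below by the dissipation with mobility frozen at \(M_0\), up to a factor \(1-o(1)\) on the set where \(|\bm v_j|\le\eta\). The mass on the complementary set is \(o(\varepsilon_j^2)\) relative to the energy, by Chebyshev and superlinear growth of \(\Psi\).

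In either approach the goal is to show that the rescaled perturbations \(\bm v_j/\varepsilon_j\) converge, weakly and then strongly on the bulk, to some \(\bm v\) with unit quadratic energy. Lower semicontinuity then gives \(\liminf\mathcal D(\bc_j)/\varepsilon_j^2\ge\) the linearized dissipation of \(\bm v\), which is at least \(\lambda_{\rm lin}\) times its quadratic energy. The delicate point is ruling out concentration, i.e. energy escaping to the large-deviation set where \(\Psi\) is not quadratic. I would handle it by an entropy-truncation inequality: on \(\{c\ge Km\}\), the dissipation \(\int c|\nabla\ln c|^2\) dominates the entropy excess by a log-Sobolev-type bound with a constant that blows up as \(K\to\infty\). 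Those regions therefore contribute a ratio \(\ge\lambda_{\rm lin}\) for large \(K\), and together with the bulk analysis this gives \(\liminf\Lambda(E_j)\ge\lambda_{\rm lin}\).
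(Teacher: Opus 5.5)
Your overall architecture matches the paper's. You use the modal computation for the spectral formula and single-mode test states $\bc^\infty+\varepsilon\xi e_k$ for the upper bound. For the lower bound you use the normalized sequence $\bq^j=(\bc^j-\bc^\infty)/\varepsilon_j$, $\varepsilon_j^2=\mathcal H(\bc^j|\bc^\infty)$, together with lower semicontinuity of the dissipation on a set where $\bc^j$ is uniformly close to $\bc^\infty$; your ``frozen mobility'' alternative is essentially the paper's restriction to the sets $G_j$.

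\textbf{The gap.} It lies in the step you yourself flag as delicate: proving that the limit carries the full normalized entropy, $\mathcal H_{\rm lin}(\bq)=1$. Without this, $\liminf_j\mathcal D(\bc^j)/\varepsilon_j^2\ge\mathcal D_{\rm lin}(\bq)\ge\lambda_{\rm lin}\mathcal H_{\rm lin}(\bq)$ proves nothing. None of your mechanisms delivers it.
\begin{itemize}
\item An $L^2$ bound plus compactness in $L^1$ still lets $|\bq^j|^2$ concentrate, so $\mathcal H_{\rm lin}(\bq)<1$ remains possible.
\item Chebyshev and the growth of $\Psi(s)=s\ln\frac sm-s+m$ do not make the entropy carried by $\{|\bc^j-\bc^\infty|>\eta\}$ of size $o(\varepsilon_j^2)$. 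A perturbation of amplitude $2\eta$ on a set of measure $\sim\varepsilon_j^2/\eta^2$ carries entropy of order $\varepsilon_j^2$ there.
\item The localized log-Sobolev step on $\{c\ge Km\}$ is not an argument. It ignores intermediate deviations ($|\bc^j-\bc^\infty|>\eta$, $c<Km$), which already destroy the sharp quadratic coefficient $1/(2m)$. The electrostatic part of $\mathcal H$ is nonlocal and cannot be split among regions. Superlevel sets carry no boundary or mass condition. The drag dissipation is not $\int c|\nabla\ln c|^2$: the weight $\delta a c_n/\omega(\bc)$ degenerates where $c_p$ is large, and $\nabla\bmu$ contains field and steric terms. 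So no inequality ``regional dissipation $\ge\lambda_{\rm lin}\times$ regional entropy'' is available.
\end{itemize}

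\textbf{The fix.} Concentration never occurs, because the weighted gradient estimate already gives strong $L^2$ compactness at scale $\varepsilon_j$.
\begin{itemize}
\item In Lemma~\ref{lem:global-entropy-production}, replace the crude remainder $C(1+E)^{3/2}$ by the sharper bound $\int_\Omega(1+c_n^j+c_p^j)|\nabla\phi^j|^2\,dx\le C\varepsilon_j^2$. This holds since $\|\nabla\phi^j\|_{L^6}+\|\nabla\phi^j\|_{L^2}\le C\|\bz\cdot(\bc^j-\bc^\infty)\|_{L^2}\le C\varepsilon_j$.
\item With $\mathcal D(\bc^j)\le C\varepsilon_j^2$ this gives $\int_\Omega|\nabla\bq^j|^2/(1+c_n^j+c_p^j)\,dx\le C$. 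H\"older and the $L^2$ bound on $\bc^j$ then give $\bq^j$ bounded in $W^{1,4/3}(\Omega)^2$, which embeds compactly into $L^2$ for $d\le3$.
\item Strong $L^2$ convergence makes $|\bq^j|^2$ uniformly integrable. Combined with $\Psi(s)\le C(s-m)^2$ on $[0,\infty)$ and Taylor expansion where $|\varepsilon_jq_i^j|$ is small, this gives $\mathcal H(\bc^j|\bc^\infty)/\varepsilon_j^2\to\mathcal H_{\rm lin}(\bq)$, hence $\mathcal H_{\rm lin}(\bq)=1$.
\end{itemize}
The same $W^{1,4/3}$ bound identifies the weak limit of the restricted gradients as $\nabla(H_0\bq+\bz\psi)$. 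Your freezing argument then closes the lower bound with no truncation needed.

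\textbf{Minor points.} The bound on $\Lambda(E_j)$ comes from your upper-bound construction, or from a contradiction hypothesis $\mathcal D/\mathcal H\le\lambda_{\rm lin}-\eta$, not from Theorem~\ref{thm:eep-decay-intro}, which only gives positivity. Also, on a $C^2$ domain $e_k$ is $C^{1,\alpha}(\overline\Omega)$ rather than smooth; this still suffices for admissibility.
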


Thus, the long-time theory is not only qualitative: near equilibrium, the optimal entropy-production constant converges to an explicit linearized rate determined by the drag mobility, the steric Hessian, the Poisson coupling, and the Neumann spectrum. As a smooth dynamical consequence, the same constant governs the local nonlinear relaxation of sufficiently small strong perturbations of the homogeneous equilibrium; see Subsection~\ref{subsec:small-stability}.

Finally, we discuss the rank-one steric limit. This case is not covered by the
main finite-energy existence theorem, but it clarifies the role of the positive
definiteness assumption on \(F\).

\begin{prop}[Rank-one steric limit]
\label{prop:rank-one-intro}
Assume that the positive definiteness condition in \textup{(A2)} is replaced by
\(
    F=
    f
    \begin{pmatrix}
    1&1\\
    1&1
    \end{pmatrix},
    \; f>0.
\)
Let
\(
    u=c_n+c_p,
    \;
    \rho=c_p-c_n.
\)
For smooth positive solutions, or for smooth entropy approximations satisfying
uniform energy and dissipation bounds, the total density \(u\) satisfies the following compactness assertion:
\[
    u^k\to u
    \text{ strongly in }L^2(0,T;L^2(\Omega)),
\]
up to a subsequence. However, the available estimates do not provide the same
compactness for the charge mode \(\rho\).
\end{prop}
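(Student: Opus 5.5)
The plan is to derive, in the rank-one setting, a uniform $L^\infty(0,T;L^2)$ bound for $u$ from the energy, a weighted gradient bound for $u$ from the dissipation, and a uniform bound on $\partial_t u$ in a dual space. Compactness for $u$ then follows from an Aubin--Lions (or Dubinskii) lemma. At the end I will point to the missing species-wise controls for $\rho$.

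\emph{Energy bound.} Since $F=f\,\mathbf 1\mathbf 1^\top$, the steric energy is $\frac f2\int_\Omega u^2\,dx$. The entropy terms $c_i\ln c_i$ are bounded below, and the Poisson term is nonnegative. Hence the uniform energy bound gives $\sup_t\|u^k(t)\|_{L^2}\le C$ and $\sup_t\int_\Omega|\nabla\phi^k|^2\le C$, so $\rho^k$ is bounded in $L^\infty(0,T;H^{-1})$ only. Note that $\|\rho\|_{L^2}\le\|u\|_{L^2}$ pointwise, since $|\rho|\le u$. So $\rho^k$ is also bounded in $L^\infty L^2$. The problem for $\rho$ is the lack of gradient control, not of integrability.

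\emph{Gradient of $u$.} The collective mode in \eqref{eq:intro-dissipation-decomposition} gives
\[
c_n\nabla\mu_n+c_p\nabla\mu_p=\nabla u+\rho\nabla\phi+f\,u\nabla u ,
\]
using $c_n\nabla\ln c_n+c_p\nabla\ln c_p=\nabla u$ and $(c_p-c_n)\nabla\phi=\rho\nabla\phi$. The dissipation bounds $\int\!\!\int |c_n\nabla\mu_n+c_p\nabla\mu_p|^2/\omega$. With $\omega\le C(1+u)$ this gives
\[
\int_0^T\!\!\int_\Omega \frac{|(1+fu)\nabla u+\rho\nabla\phi|^2}{1+u}\le C .
\]
The cross term is controlled by $\rho^2|\nabla\phi|^2/(1+u)\le u|\nabla\phi|^2$. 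Since $\phi=\mathcal P\rho$ with $\|\nabla\phi\|_{L^6}\le C\|\rho\|_{L^2}$ and $u\in L^\infty L^2$, this lies in $L^\infty(0,T;L^1)$ in $d\le3$ (H\"older $L^{3/2}\cdot L^3$, and $\nabla\phi\in L^6$). Therefore $\int\!\!\int (1+u)|\nabla u|^2\le C$, which gives $\nabla(u^{3/2})\in L^2(\Omega_T)$ and $\nabla u\in L^2(\Omega_T)$. This is actually better than in the positive definite case, because the steric term produces the weight $(1+fu)^2$.

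\emph{Time derivative and compactness.} Summing the equations gives $\partial_t u=\nabla\cdot(\mathbf J_n+\mathbf J_p)$. The total flux is $\mathbf 1^\top M\nabla\mu$, which is a combination of $\sqrt{c_i/\omega}\,\boldsymbol A_i$-type and $\boldsymbol B$-type terms with weights bounded by $C\sqrt{1+u}$. So it lies in $L^{4/3}(\Omega_T)$, as in Lemma~\ref{lem:global-flux-estimate}. Hence $\partial_t u^k$ is bounded in $L^{4/3}(0,T;W^{1,4}(\Omega)')$. Aubin--Lions with $H^1\Subset L^2\subset (W^{1,4})'$ then yields $u^k\to u$ in $L^2(0,T;L^2)$.

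\emph{Failure for $\rho$.} The main obstacle is showing why the argument does not extend to $\rho$, and this is where I expect the most care to be needed. The species-wise terms $\delta a c_n|\nabla\mu_n|^2/\omega$ involve $\nabla\ln c_n-\nabla\phi+f\nabla u$. They yield at best $\int c_n|\nabla\ln c_n|^2/(1+u)$-type control, i.e.\ $\nabla\sqrt{c_n}$ with a degenerate weight. In the positive definite case this weight was compensated by $c_n\in L^\infty L^2$ through a separate term $\alpha_F c_n\nabla c_n$. Here the steric contribution is $f\nabla u$ for both species, so it feeds only into $u$. Consequently no estimate of the form $\int\!\!\int|\nabla\rho|^2/(1+u)\le C$ follows. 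In the regime where $u$ is large, the weight $c_i/\omega$ degenerates, and only $\nabla\sqrt{c_i}$-type quantities weighted by $(1+u)^{-1}$ survive. I would state this as the absence of any bound in a compactly embedded space for $\rho^k$, so Aubin--Lions cannot be applied. Possibly I would add a heuristic of oscillating charge profiles with fixed $u$ to show that the energy does not penalize them beyond $H^{-1}$.
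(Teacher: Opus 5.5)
Your proof of the positive part ($u^k\to u$) is correct and follows the paper's argument step by step:
\begin{itemize}
\item the energy bound from $\frac f2\int_\Omega u^2\,dx$;
\item the collective-mode identity $c_n\nabla\mu_n+c_p\nabla\mu_p=(1+fu)\nabla u+\rho\nabla\phi$;
\item absorption of $\rho^2|\nabla\phi|^2/(1+u)\le u|\nabla\phi|^2$ through $\|u\|_{L^{3/2}}\|\nabla\phi\|_{L^6}^2$;
\item the $L^{4/3}$ flux bound of Lemma~\ref{lem:global-flux-estimate};
\item Aubin--Lions with $H^1(\Omega)\hookrightarrow L^2(\Omega)\hookrightarrow W^{1,4}(\Omega)'$.
\end{itemize}
Your lower bound $(1+fu)^2/(1+u)\ge\min(1,f)^2(1+u)$ is correct, and slightly sharper than the constant lower bound the paper uses.

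The final paragraph, however, draws a false conclusion. Under your hypotheses (uniform energy and time-integrated dissipation bounds), $\rho^k$ is bounded in a compactly embedded space. The bound comes from exactly the species-wise term you examine. We have $|\boldsymbol A_n|^2=\delta a c_n|\nabla\mu_n|^2/\omega(\bc)$ and $\nabla\mu_n=\nabla c_n/c_n-\nabla\phi+f\nabla u$. Using $c_n/\omega(\bc)\le 1/b$, your own bound $\nabla u\in L^2(\Omega_T)$, and $\nabla\phi\in L^\infty(0,T;L^2(\Omega))$, you get
\[
\int_0^T\!\!\int_\Omega\frac{|\nabla c_n|^2}{c_n\,\omega(\bc)}\,dxdt\le C .
\]
Since $c_n\,\omega(\bc)\le C(1+u)^2$ and $u\in L^\infty(0,T;L^2(\Omega))$, Cauchy--Schwarz yields $\|\nabla c_n\|_{L^2(0,T;L^1(\Omega))}\le C$, and the same holds for $c_p$.

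The fluxes are bounded in $L^{4/3}(\Omega_T)$ species by species; the paper's proof states exactly this. Hence $\partial_t c_i$ is bounded in $L^{4/3}(0,T;W^{1,4}(\Omega)')$. Apply Aubin--Lions--Simon with the compact embedding $W^{1,1}(\Omega)\hookrightarrow L^1(\Omega)$ and $L^1(\Omega)\hookrightarrow W^{1,4}(\Omega)'$ (valid for $d\le3$). This gives $c_i^k\to c_i$ a.e. along a subsequence. Then $|\rho^k|\le u^k$ and $u^k\to u$ in $L^2(\Omega_T)$ give $\rho^k\to\rho$ strongly in $L^2(\Omega_T)$ by dominated convergence.

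So a weighted Fisher-information bound $\int\!\!\int|\nabla\sqrt{c_i}|^2/(1+u)\le C$ does not block Aubin--Lions. Your oscillation heuristic fails for the same reason: oscillations of $\rho$ at fixed $u$ are penalized by the assumed dissipation bound, not by the energy.

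The negative half of the statement can only be supported in the narrow sense that Remark~\ref{rem:rank-one-missing-charge} actually argues. First, the energy and the collective mode, which are all that is used for $u$, carry no information on $\rho$. Second, the paper asserts that the specific estimate $\int\!\!\int|\nabla\bc|^2/(1+u)$ of the positive-definite theory degenerates as $\alpha_{F_\sigma}\to0$. You should drop the claim that no compactly embedded bound for $\rho^k$ exists, or restrict it to that narrow statement.
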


In the rank-one case, the collective drag mode controls the total density
\(u=c_n+c_p\), but not the charge mode \(\rho=c_p-c_n\). Since \(\rho\) still
enters both the Poisson equation and the individual fluxes, this missing
charge-mode compactness prevents a direct extension of the full finite-energy
weak theory.

The rest of the paper is organized as follows. In Section~\ref{sec:md}, we
derive the drag-modified steric PNP system from the energetic variational
approach and carry out the nondimensionalization leading to
\eqref{eq:intro-system}. Section~\ref{sec:global-weak} is devoted to the proof
of the global finite-energy weak existence theorem. We first set up the
vacuum-compatible square-root weak formulation and the basic a priori estimates, then construct
solutions by an entropy-variable approximation and pass to the limit in the
nonlinear fluxes. The rank-one steric limit is discussed at the end of
Section~\ref{sec:global-weak}. Section~\ref{sec:long-time} establishes the sublevel entropy-entropy production inequality in the pure Neumann equal-mass setting, derives exponential relaxation for approximation-generated weak solutions, discusses the extension to general finite-energy weak solutions, identifies the sharp small-sublevel entropy-production constant, and concludes with its nonlinear stability consequence near equilibrium.


\section{Model derivation}
\label{sec:md}

\subsection{Energetic variational derivation}
\label{subsec:derivation}

We derive the drag-modified steric PNP system from the energetic variational
approach (EnVarA)
\cite{eisenberg2010energy,hyon2012energy,hsieh2015transport}. We start from a
Helmholtz free energy with ideal entropy, electrostatic energy, and local
steric interaction, and from a velocity-level dissipation functional with
solvent-ion friction and interspecies drag. The force-balance relations
obtained from these two variational principles determine the Onsager mobility
used in the analysis below.

Let \(\Omega\subset\mathbb R^d\) be a bounded domain, and let \(c_n\) and
\(c_p\) denote the concentrations of the negatively and positively charged
ionic species. The two species carry charges \(-z_0e\) and \(+z_0e\), where
\(z_0>0\) is the valence number and \(e\) is the elementary charge. Denoting
their transport velocities by \(\boldsymbol u_n\) and \(\boldsymbol u_p\), the
conservation laws are
\begin{equation}
    \partial_t c_n+\nabla\cdot(c_n\boldsymbol u_n)=0,
    \qquad
    \partial_t c_p+\nabla\cdot(c_p\boldsymbol u_p)=0.
\label{eq:dim-continuity}
\end{equation}
The electric potential \(\phi\) is determined by
\begin{equation}
    -\epsilon\Delta\phi=z_0e(c_p-c_n),
\label{eq:dim-poisson}
\end{equation}
where \(\epsilon>0\) is the dielectric permittivity.

It is useful to make explicit that the electrostatic energy is a nonlocal
interaction between the two charged species. Let \(\mathcal G_\Omega\) denote
the Green operator for \(-\Delta\) with the prescribed boundary conditions.
Then \eqref{eq:dim-poisson} can be written as
\(
    \phi
    =
    \frac{z_0e}{\epsilon}\,
    \mathcal G_\Omega(c_p-c_n).
\)
Consequently,
\begin{equation}
    \frac{\epsilon}{2}\int_\Omega |\nabla\phi|^2\,dx
    =
    \frac{(z_0e)^2}{2\epsilon}
    \int_\Omega
    (c_p-c_n)\,\mathcal G_\Omega(c_p-c_n)\,dx .
\label{eq:electrostatic-nonlocal}
\end{equation}
Equivalently, if \(\mathsf G_\Omega(x,y)\) is the Green kernel associated with
\(\mathcal G_\Omega\), then
\[
    \frac{\epsilon}{2}\int_\Omega |\nabla\phi|^2\,dx
    =
    \frac{(z_0e)^2}{2\epsilon}
    \int_\Omega\int_\Omega
    \mathsf G_\Omega(x,y)
    (c_p-c_n)(x)(c_p-c_n)(y)\,dxdy .
\]
In the pure Neumann case, the Green operator is defined  on the zero-mean
charge space, together with the normalization \(\int_\Omega\phi\,dx=0\).

We model steric effects by a local quadratic excess free-energy density, \(
    \frac12 g_{nn}c_n^2+g_{np}c_nc_p+\frac12 g_{pp}c_p^2,
\)
as in local PNP-steric approximations
\cite{horng2012pnp,lin2014new,hsieh2019global}. Here the coefficients
\(g_{ij}\) describe effective short-range repulsion between ionic species. We
assume that \(
    G:=
    \begin{pmatrix}
    g_{nn} & g_{np}\\
    g_{np} & g_{pp}
    \end{pmatrix}
\)
is symmetric positive definite. 
The Helmholtz free energy is then
\begin{equation}
\begin{aligned}
\mathcal F(c_n,c_p)
=
\int_\Omega
\bigg[
\underbrace{
K_B\theta
\left(
c_n\ln\frac{c_n}{c_{\rm ref}}
+
c_p\ln\frac{c_p}{c_{\rm ref}}
\right)
}_{\text{Ideal mixing entropy}}
+
\underbrace{
\frac{\epsilon}{2}|\nabla\phi|^2
}_{\text{Electrostatic energy}}
+
\underbrace{
\frac12 g_{nn}c_n^2
+g_{np}c_nc_p
+\frac12 g_{pp}c_p^2
}_{\text{Steric interaction energy}}
\bigg]\,dx,
\end{aligned}
\label{eq:dim-free-energy}
\end{equation}
where \(K_B\) is the Boltzmann constant, \(\theta\) is the absolute
temperature, and \(c_{\rm ref}\) is a reference concentration.


The nonlocal representation \eqref{eq:electrostatic-nonlocal} gives
\[
    \delta
    \left(
    \frac{\epsilon}{2}\int_\Omega |\nabla\phi|^2\,dx
    \right)
    =
    z_0e\int_\Omega
    \phi\,\delta(c_p-c_n)\,dx .
\]
Therefore the electrochemical potentials are
\begin{equation}
\begin{aligned}
\mu_n
:=
\frac{\delta\mathcal F}{\delta c_n}
&=
K_B\theta
\left(
\ln\frac{c_n}{c_{\rm ref}}+1
\right)
-z_0e\phi
+g_{nn}c_n+g_{np}c_p,
\\
\mu_p
:=
\frac{\delta\mathcal F}{\delta c_p}
&=
K_B\theta
\left(
\ln\frac{c_p}{c_{\rm ref}}+1
\right)
+z_0e\phi
+g_{np}c_n+g_{pp}c_p.
\end{aligned}
\label{eq:dim-chemical-potentials}
\end{equation}
The terms \(-z_0e\phi\) and \(+z_0e\phi\) are the electrostatic contributions
to the electrochemical potentials, while the linear concentration terms arise
from the local steric interaction.

We compute the conservative force densities by varying the free energy along
mass-preserving transport variations. For each species \(i\in\{n,p\}\), let
\(\boldsymbol h_i\) be an Eulerian virtual displacement such that \(\boldsymbol h_i \cdot \boldsymbol \nu = 0\) on \( \partial \Omega\) and consider the
perturbed map
\(
    T_i^\varepsilon(x)
    =
    x+\varepsilon\boldsymbol h_i(x).
\)
Let \(c_i^\varepsilon\) be the density obtained by transporting \(c_i\) through
\(T_i^\varepsilon\). The conservation of mass yields the following pointwise mass-preserving relation after a change of variables
\[  c_i^\varepsilon(T_i^\varepsilon(x))
    \det\nabla T_i^\varepsilon(x)
    =
    c_i(x).
\]
Differentiating at \(\varepsilon=0\) gives
\begin{equation}
    \delta c_i
    :=
    \left.\frac{d}{d\varepsilon}\right|_{\varepsilon=0}
    c_i^\varepsilon
    =
    -\nabla\cdot(c_i\boldsymbol h_i),
    \qquad i\in\{n,p\}.
\label{eq:dim-concentration-variation}
\end{equation}
Using \eqref{eq:dim-chemical-potentials}, and varying one species at a time,
we obtain
\begin{equation}
\begin{aligned}
\delta_i\mathcal F
=
\int_\Omega \mu_i\,\delta c_i\,dx
=
-\int_\Omega
\mu_i\,\nabla\cdot(c_i\boldsymbol h_i)\,dx
=
\int_\Omega
c_i\nabla\mu_i\cdot\boldsymbol h_i\,dx,
\qquad i\in\{n,p\}.
\end{aligned}
\label{eq:dim-energy-transport-variation}
\end{equation}
Thus the least action principle gives the conservative force densities
\begin{equation}
    \boldsymbol F_i^{\rm con}
    =
    -c_i\nabla\mu_i,
    \qquad i\in\{n,p\}.
\label{eq:dim-conservative-forces}
\end{equation}

The velocity-level dissipation contains the standard solvent-ion friction and
an additional relative-drag term between the two species. We take
\begin{equation}
\begin{aligned}
\mathcal D
=
\int_\Omega
\bigg[
\frac{K_B\theta}{D_n}c_n|\boldsymbol u_n|^2
+
\frac{K_B\theta}{D_p}c_p|\boldsymbol u_p|^2
+
\frac{K_B\theta}{D_{n,p}}
c_nc_p|\boldsymbol u_n-\boldsymbol u_p|^2
\bigg]\,dx,
\end{aligned}
\label{eq:dim-dissipation}
\end{equation}
where \(D_n,D_p>0\) are the single-species diffusion coefficients and
\(D_{n,p}>0\) is the relative-drag parameter
\cite{taylor1993multicomponent,krishna2019diffusing,hsieh2015transport}. The
factor \(c_nc_p\) reflects that the drag is generated by interactions between
the two ionic species and vanishes when either species is absent.

By the maximum dissipation principle, the dissipative force densities are
obtained by varying \(\frac12\mathcal D\) with respect to the velocities:
\begin{equation}
\begin{aligned}
\boldsymbol F_n^{\rm dis}
:=
\frac{\delta(\frac12\mathcal D)}
     {\delta\boldsymbol u_n}
&=
\frac{K_B\theta}{D_n}c_n\boldsymbol u_n
+
\frac{K_B\theta}{D_{n,p}}
c_nc_p(\boldsymbol u_n-\boldsymbol u_p),
\\
\boldsymbol F_p^{\rm dis}
:=
\frac{\delta(\frac12\mathcal D)}
     {\delta\boldsymbol u_p}
&=
\frac{K_B\theta}{D_p}c_p\boldsymbol u_p
+
\frac{K_B\theta}{D_{n,p}}
c_nc_p(\boldsymbol u_p-\boldsymbol u_n).
\end{aligned}
\label{eq:dim-dissipative-forces}
\end{equation}
The force-balance law
\[
    \boldsymbol F_i^{\rm dis}=\boldsymbol F_i^{\rm con},
    \qquad i\in\{n,p\},
\]
therefore yields
\begin{equation}
\begin{aligned}
\frac{K_B\theta}{D_n}c_n\boldsymbol u_n
+
\frac{K_B\theta}{D_{n,p}}
c_nc_p(\boldsymbol u_n-\boldsymbol u_p)
&=
-c_n\nabla\mu_n,
\\
\frac{K_B\theta}{D_p}c_p\boldsymbol u_p
+
\frac{K_B\theta}{D_{n,p}}
c_nc_p(\boldsymbol u_p-\boldsymbol u_n)
&=
-c_p\nabla\mu_p.
\end{aligned}
\label{eq:dim-force-balance}
\end{equation}

Introducing the ionic fluxes
\(
    \boldsymbol J_n:=c_n\boldsymbol u_n,
    \;
    \boldsymbol J_p:=c_p\boldsymbol u_p,
\)
and solving \eqref{eq:dim-force-balance}, we obtain
\begin{equation}
\begin{pmatrix}
\boldsymbol J_n\\[1mm]
\boldsymbol J_p
\end{pmatrix}
=
-
\mathcal M(c_n,c_p)
\begin{pmatrix}
\nabla\mu_n\\[1mm]
\nabla\mu_p
\end{pmatrix},
\label{eq:dim-mobility-form}
\end{equation}
where
\(
\mathcal M(c_n,c_p)
=
\frac{1}{K_B\theta\,\Xi(c_n,c_p)}
\begin{pmatrix}
D_nc_n(D_{n,p}+D_pc_n)
&
D_nD_pc_nc_p
\\[1mm]
D_nD_pc_nc_p
&
D_pc_p(D_{n,p}+D_nc_p)
\end{pmatrix},
\label{eq:dim-mobility-matrix}
\)
with
\(
    \Xi(c_n,c_p)
    :=
    D_{n,p}+D_nc_p+D_pc_n .
\)
The matrix \(\mathcal M(c_n,c_p)\) is symmetric and positive semidefinite for
nonnegative concentrations. Combining \eqref{eq:dim-mobility-form} with
\eqref{eq:dim-continuity} and \eqref{eq:dim-poisson} gives the
drag-modified steric PNP system.

If the relative-drag term in \eqref{eq:dim-dissipation} is absent, the
mobility reduces to the diagonal solvent-ion mobility of the classical PNP
model. Thus, the off-diagonal entries in \eqref{eq:dim-mobility-matrix} are
precisely the contribution of interspecies drag.

\subsection{Nondimensionalization and energy dissipation}
\label{subsec:nondimensionalization}

We nondimensionalize the mobility formulation derived above. Let \(D_*>0\) be
a reference diffusivity and define the Debye length
\(
    \lambda_D
    :=
    \left(
    \frac{\epsilon K_B\theta}{z_0^2e^2c_{\rm ref}}
    \right)^{1/2}.
\)
We introduce
\[
    x=\lambda_D\widehat x,
    \quad
    t=\frac{\lambda_D^2}{D_*}\widehat t,
    \quad
    c_i=c_{\rm ref}\widehat c_i,
    \quad
    \phi=\frac{K_B\theta}{z_0e}\widehat\phi,
\quad
    \mu_i=K_B\theta\,\widehat\mu_i,
    \quad
    \boldsymbol J_i
    =
    \frac{D_*c_{\rm ref}}{\lambda_D}\widehat{\boldsymbol J}_i,
    \quad i\in\{n,p\}.
\]
The dimensionless free energy \(E\) is defined by
\(
    \mathcal F
    =
    K_B\theta\,c_{\rm ref}\lambda_D^d E .
\)

We also set
\[
    f_{ij}:=\frac{c_{\rm ref}}{K_B\theta}g_{ij},
    \quad
    a:=\frac{D_n}{D_*},
    \quad
    b:=\frac{D_p}{D_*},
    \quad
    \delta:=\frac{D_{n,p}}{D_*c_{\rm ref}} .
\]
Thus, the dimensionless steric matrix
\(
    F=
    \begin{pmatrix}
    f_{nn} & f_{np}\\
    f_{np} & f_{pp}
    \end{pmatrix}
\)
is symmetric positive definite whenever the dimensional steric matrix \(G\) is
symmetric positive definite.

After dropping hats and keeping the same notation for the rescaled variables and domain, the dimensional formulation derived above becomes the nondimensional system stated in \eqref{eq:intro-system}. In particular, the rescaled free energy, mobility matrix, and entropy-production decomposition are given by \eqref{eq:intro-free-energy}, \eqref{eq:intro-mobility}, and \eqref{eq:intro-dissipation-decomposition}, respectively. These are the forms used throughout the rest of the paper.


\section{Global finite-energy weak solutions}
\label{sec:global-weak}
In this section, we prove Theorem~\ref{thm:global-weak-intro} and discuss the rank-one steric case in Proposition~\ref{prop:rank-one-intro}. The main difficulty is the degeneracy encoded in the entropy production \eqref{eq:intro-dissipation-decomposition}: finite-energy solutions do not provide species-wise Fisher-information bounds, and vacuum may prevent \(\nabla\ln c_i\), hence \(\nabla\mu_i\), from being meaningful as functions. For this reason, after collecting the necessary properties of the Poisson operator, the free energy, and the entropy-variable map in Section~\ref{subsec:global-preparations}, we formulate the problem in Definition~\ref{def:finite-energy-weak-solution} using a square-root formulation of the weighted entropy gradients naturally controlled by \eqref{eq:intro-dissipation-decomposition}. The a priori estimates in Section~\ref{subsec:apriori-estimates} show how the positive definite steric energy combines with the weighted gradient estimate to give the compactness needed for the nonlinear fluxes. We then construct approximate solutions by the entropy-variable scheme in Section~\ref{subsec:entropy-variable-approximation}, obtain uniform bounds and compactness in Section~\ref{subsec:uniform-bounds-compactness}, and pass to the limit in Section~\ref{subsec:passage-to-limit}. Finally, Section~\ref{subsec:rank-one-steric} explains why the same argument no longer applies in the rank-one steric case, where the energy controls only the total density and not the charge mode.

\subsection{Preliminaries}
\label{subsec:global-preparations}

We collect several basic facts that will be used repeatedly in the existence
proof. Throughout this section, we keep the notation introduced in
Section~\ref{sec:introduction}. In particular,
\[
    \bc=(c_n,c_p)^\top,\qquad
    \bmu=(\mu_n,\mu_p)^\top,\qquad
    \rho=c_p-c_n,\qquad
    \omega(\bc)=\delta+a c_p+b c_n .
\]
The purpose of this subsection is to isolate the analytical ingredients needed
below: the Poisson operator, the coercivity and convexity of the free energy,
and the invertibility of the entropy-variable map.

We first fix the operator notation for the Poisson equation. This allows us to
treat the electrostatic part of the free energy as a convex quadratic form in
the charge density.

\begin{lem}[Poisson operator and electrostatic energy]
\label{lem:poisson-operator}
Let \(\rho\in L^2(\Omega)\) be an admissible charge density, and let
\(
    \phi=\mathcal P\rho
\)
denote the solution of
\(
    -\Delta\phi=\rho
\)
with the boundary conditions specified in \textup{(A3)}. In the pure Neumann
case, admissibility means additionally
\(
    \int_\Omega \rho\,dx=0,
\)
and the potential is normalized by
\(
    \int_\Omega \phi\,dx=0.
\)
Then, by \textup{(A3)},
\[
    \|\mathcal P\rho\|_{H^2(\Omega)}
    +
    \|\nabla\mathcal P\rho\|_{L^6(\Omega)}
    \le
    C_\phi\|\rho\|_{L^2(\Omega)}.
\]
Moreover, \(\mathcal P\) is self-adjoint and nonnegative on the admissible
charge space, and
\[
    \frac12\int_\Omega |\nabla\phi|^2\,dx
    =
    \frac12\langle \rho,\mathcal P\rho\rangle .
\]
Consequently, the electrostatic energy is a nonnegative convex quadratic
functional of the charge density \(\rho\).
\end{lem}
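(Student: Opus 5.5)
The lemma is essentially a repackaging of assumption (A3), so the plan is to read off each assertion from (A3) and verify the energy identity and convexity directly. The elliptic estimate
\[
\|\mathcal P\rho\|_{H^2(\Omega)}+\|\nabla\mathcal P\rho\|_{L^6(\Omega)}\le C_\phi\|\rho\|_{L^2(\Omega)}
\]
is assumed verbatim in (A3) for $\rho\in\mathcal X_\Phi$. In the Neumann case, admissibility ($\int_\Omega\rho\,dx=0$) is exactly the condition $\rho\in L^2_0(\Omega)=\mathcal X_\Phi$, and the normalization $\int_\Omega\phi\,dx=0$ is the one built into (A3). For completeness I would remark why (A3) is consistent with standard theory. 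On a bounded $C^2$ domain, $H^2$-regularity for the Dirichlet or the mean-zero Neumann Laplacian holds. The $L^6$ bound on $\nabla\phi$ then follows from the Sobolev embedding $H^1\hookrightarrow L^6$ for $d\le3$, applied to $\nabla\phi\in H^1$.

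For self-adjointness I would use the symmetric identity in (A3),
\[
\int_\Omega\nabla(\mathcal P\rho)\cdot\nabla(\mathcal P\eta)\,dx=\langle\rho,\mathcal P\eta\rangle=\langle\eta,\mathcal P\rho\rangle .
\]
This identity itself comes from testing $-\Delta\mathcal P\eta=\eta$ against $\mathcal P\rho$ and integrating by parts. The boundary term vanishes because of the homogeneous Dirichlet or Neumann conditions, and $H^2$-regularity justifies the integration by parts. Setting $\eta=\rho$ gives
\[
\langle\rho,\mathcal P\rho\rangle=\|\nabla\mathcal P\rho\|_{L^2}^2\ge0,
\]
which yields both nonnegativity and the energy identity $\tfrac12\int_\Omega|\nabla\phi|^2\,dx=\tfrac12\langle\rho,\mathcal P\rho\rangle$.

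For convexity, define $Q(\rho):=\tfrac12\langle\rho,\mathcal P\rho\rangle$. Linearity and symmetry of $\mathcal P$ give, for $\theta\in[0,1]$,
\[
\theta Q(\rho_1)+(1-\theta)Q(\rho_2)-Q(\theta\rho_1+(1-\theta)\rho_2)=\tfrac12\theta(1-\theta)\,Q(\rho_1-\rho_2)\cdot 2\ge0 .
\]
This uses that $\mathcal X_\Phi$ is a linear space, so $\rho_1-\rho_2$ is again admissible. The same expansion shows that $Q$ is continuous on $L^2$, since $|Q(\rho)|\le\tfrac12 C_\phi\|\rho\|_{L^2}^2$.

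There is no serious obstacle. The only point needing care is the Neumann case: every charge used must lie in $L^2_0$, so that $\mathcal P$ is well defined and the mean-zero normalization is preserved under linear combinations. Note also that $\mathcal P$ is only nonnegative, not positive definite, in general, although it is in fact injective here.
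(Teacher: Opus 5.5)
Your proposal is correct and follows the paper's route. The paper gives no separate proof and reads the lemma directly off (A3), which already contains the estimate, the symmetric identity, and nonnegativity. Your choice $\eta=\rho$ in that identity and your quadratic-form identity $\theta Q(\rho_1)+(1-\theta)Q(\rho_2)-Q(\theta\rho_1+(1-\theta)\rho_2)=\theta(1-\theta)Q(\rho_1-\rho_2)$ make the energy identity and the convexity explicit.

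One small correction concerns your closing remark. Since $\rho=-\Delta\mathcal P\rho$, the condition $\langle\rho,\mathcal P\rho\rangle=\|\nabla\mathcal P\rho\|_{L^2}^2=0$ already forces $\rho=0$. So $\mathcal P$ is in fact strictly positive on $\mathcal X_\Phi$; it is only non-coercive on $L^2$, being compact. This plays no role in the lemma.
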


The next estimate establishes the key  species-wise \(L^2\)-control. The positive definiteness of the steric matrix is essential for its proof.

\begin{lem}[Coercivity of the free energy]
\label{lem:energy-coercivity}
There exists a constant \(C>0\), depending only on $\Omega$, such that every admissible nonnegative state
\(\bc=(c_n,c_p)^\top\)  of (\ref{eq:intro-system-c})-(\ref{eq:intro-system-phi}) satisfies
\[
    E(\bc)
    \ge
    \frac{\alpha_F}{2}
    \int_\Omega
    \bigl(c_n^2+c_p^2\bigr)\,dx
    -
    C ,
\]
where $E$ is given in (\ref{eq:intro-free-energy}) and $\alpha_F$ is the constant specified in \textup{(A2)}.  Consequently, bounded energy sublevels are bounded in \(L^2(\Omega)^2\).
\end{lem}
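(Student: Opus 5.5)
I will bound the three groups of terms in $E(\bc)$ from \eqref{eq:intro-free-energy} from below separately.

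\emph{Electrostatic and steric parts.} By Lemma~\ref{lem:poisson-operator}, the electrostatic contribution $\frac12\int_\Omega|\nabla\phi|^2\,dx=\frac12\langle\rho,\mathcal P\rho\rangle$ is nonnegative, so I discard it. For the steric part, the definition of $\alpha_F$ in \textup{(A2)} as the smallest eigenvalue of the symmetric matrix $F$ gives, pointwise,
\[
\tfrac12 f_{nn}c_n^2+f_{np}c_nc_p+\tfrac12 f_{pp}c_p^2=\tfrac12\,\bc^\top F\bc\ge\tfrac{\alpha_F}{2}\bigl(c_n^2+c_p^2\bigr).
\]

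\emph{Entropy part.} The only term that can be negative is the entropy. The elementary inequality $s\ln s\ge -e^{-1}$ for $s\ge0$ (with $0\ln0=0$) gives
\[
\int_\Omega(c_n\ln c_n+c_p\ln c_p)\,dx\ge-2e^{-1}|\Omega|.
\]
This is a constant depending only on $\Omega$, so we may take $C=2|\Omega|/e$.

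\emph{Conclusion.} Summing the three lower bounds yields the stated inequality. If one wants to spend a fraction of the quadratic term to absorb the entropy instead, one could use $s\ln s\ge -C_\varepsilon-\varepsilon s^2$. However, that would make the constant depend on $\alpha_F$, so the uniform bound $-1/e$ is preferable and matches the claimed dependence on $\Omega$ alone.

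The sublevel statement then follows directly: if $E(\bc)\le E_0$, then $\int_\Omega(c_n^2+c_p^2)\,dx\le 2(E_0+C)/\alpha_F$. There is no real obstacle. The only point to check is that the entropy integral is well defined, possibly as $+\infty$, for nonnegative $L^2$ states. This holds because $s\ln s$ is bounded below and $s\ln s\le s^2$ for $s\ge0$, so its integral is in fact finite for $L^2$ states.
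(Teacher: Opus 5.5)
Your proof is correct and follows the paper's argument exactly: the pointwise eigenvalue bound $\tfrac12\bc^\top F\bc\ge\tfrac{\alpha_F}{2}|\bc|^2$, the nonnegativity of $\tfrac12\int_\Omega|\nabla\phi|^2\,dx$, and $s\ln s\ge -e^{-1}$, which together give $C=2|\Omega|/e$. Your extra remark that $s\ln s\le s^2$ makes the entropy integral finite for nonnegative $L^2$ states is correct, and the paper leaves this point implicit.
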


\begin{proof}
For every \(\xi\in\mathbb R^2\),
\(
    \xi^\top F\xi\ge \alpha_F|\xi|^2 .
\)
Therefore, the steric part of the energy satisfies
\(
    \frac12
    \int_\Omega \bc^\top F\bc\,dx
    \ge
    \frac{\alpha_F}{2}
    \int_\Omega |\bc|^2\,dx .
\)
The electrostatic contribution is nonnegative in its original form
\(\frac12\int_\Omega|\nabla\phi|^2\,dx\). Finally,
\(s\ln s\ge -e^{-1}\) for all \(s\ge0\). Combining these estimates gives the
claim.
\end{proof}

Besides coercivity, we shall use the convexity of the free energy in the
discrete entropy estimate. The electrostatic contribution is convex since it
is a nonnegative quadratic form in the charge density, possibly plus fixed
affine and constant terms, and the charge density depends linearly on \(\bc\).

\begin{lem}[Convexity inequality]
\label{lem:global-convexity}
Let \(\bc=(c_n,c_p)^\top\) and
\(\widetilde{\bc}=(\widetilde c_n,\widetilde c_p)^\top\) be admissible
nonnegative states with finite energy. Assume that \(\bc\) is strictly
positive and sufficiently smooth so that the corresponding electrochemical
potential \(\bmu(\bc)\) is well-defined and the right-hand side below is
finite. Then
\begin{align}\label{state_estimate}
    E(\bc)-E(\widetilde{\bc})
    \le
    \int_\Omega
    \bmu(\bc)\cdot(\bc-\widetilde{\bc})\,dx.
\end{align}
\end{lem}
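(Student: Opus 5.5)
The inequality is the supporting-hyperplane property of a convex functional at \(\bc\). We split \(E\) into its three parts and prove the first-order convexity inequality separately for each, with the pointwise gradient of each part being the corresponding piece of \(\bmu(\bc)\).

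\emph{Entropy part.} For \(s>0\) and \(t\ge0\), convexity of \(g(s)=s\ln s\) gives \(g(t)\ge g(s)+(\ln s+1)(t-s)\); at \(t=0\) this holds with \(g(0)=0\). Integrating yields
\[
\int_\Omega (c_i\ln c_i-\widetilde c_i\ln\widetilde c_i)\,dx\le\int_\Omega(\ln c_i+1)(c_i-\widetilde c_i)\,dx,\qquad i\in\{n,p\}.
\]
Strict positivity of \(c_i\) keeps the logarithms finite, and the finite-energy assumption on \(\widetilde{\bc}\) makes every term integrable.

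\emph{Steric part.} Since \(F\) is symmetric and positive definite, \(Q(\bc)=\tfrac12\bc^\top F\bc\) satisfies
\[
Q(\bc)-Q(\widetilde{\bc})=F\bc\cdot(\bc-\widetilde{\bc})-\tfrac12(\bc-\widetilde{\bc})^\top F(\bc-\widetilde{\bc})\le F\bc\cdot(\bc-\widetilde{\bc}).
\]
This is integrable because both states lie in \(L^2\), which follows from Lemma~\ref{lem:energy-coercivity}. Only positive semidefiniteness is used here.

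\emph{Electrostatic part.} Set \(\rho=c_p-c_n\) and \(\widetilde\rho=\widetilde c_p-\widetilde c_n\); both belong to \(\mathcal X_\Phi\) by admissibility. Lemma~\ref{lem:poisson-operator} gives self-adjointness and nonnegativity of \(\mathcal P\), so
\[
\tfrac12\langle\rho,\mathcal P\rho\rangle-\tfrac12\langle\widetilde\rho,\mathcal P\widetilde\rho\rangle=\langle\mathcal P\rho,\rho-\widetilde\rho\rangle-\tfrac12\langle\rho-\widetilde\rho,\mathcal P(\rho-\widetilde\rho)\rangle\le\int_\Omega\phi\,(\rho-\widetilde\rho)\,dx,
\]
with \(\phi=\mathcal P\rho\). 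Writing \(\rho-\widetilde\rho=(c_p-\widetilde c_p)-(c_n-\widetilde c_n)\), the right-hand side becomes the pairing of \((-\phi,\phi)\) with \(\bc-\widetilde{\bc}\).

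Adding the three inequalities reproduces exactly \(\mu_n=\ln c_n+1-\phi+f_{nn}c_n+f_{np}c_p\) and the analogous expression for \(\mu_p\), which gives \eqref{state_estimate}.

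The main obstacle is the entropy term when \(\widetilde{\bc}\) vanishes on a set or when \(\ln c_i\) is unbounded. Integrability of \(\ln c_i\,(c_i-\widetilde c_i)\) is exactly what the hypothesis ``the right-hand side is finite'' supplies. Because the pointwise inequality holds everywhere, including at \(\widetilde c_i=0\), integrating it is legitimate once the entropies of both states are finite.
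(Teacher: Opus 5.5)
Your proposal is correct and follows the same route as the paper: the entropy, steric, and electrostatic parts of \(E\) are each convex, and \(\bmu(\bc)\) is the subgradient at \(\bc\), so the stated inequality is the supporting-hyperplane inequality. You make each piece explicit, namely the tangent-line bound for \(s\ln s\) (including at \(\widetilde c_i=0\)) and the exact quadratic remainders for the steric and Poisson parts, and you track integrability; the paper's one-line convexity argument leaves these steps implicit.
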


\begin{proof}
The functions \(s\mapsto s\ln s\) are convex on \([0,\infty)\), and the
steric energy is convex since \(F\) is symmetric positive definite. By using
Lemma~\ref{lem:poisson-operator}, the electrostatic energy is convex in
\(c_p-c_n\). Since \(c_p-c_n\) depends linearly on \(\bc\), the electrostatic
energy is convex as a function of \(\bc\). Therefore, \(E\) is convex on the
admissible set, and estimate (\ref{state_estimate}) is the subgradient inequality
evaluated at \(\bc\). The subgradient is precisely the electrochemical
potential \(\bmu(\bc)\).
\end{proof}

The approximation scheme will be formulated in entropy variables. In the
present notation, the entropy variables are precisely the electrochemical
potentials \(\bmu=(\mu_n,\mu_p)^\top\). The inverse map from entropy variables
to concentrations takes values in the positive cone, where the logarithmic
entropy is well-defined.

\begin{lem}[Invertibility of entropy variables]
\label{lem:global-entropy-invertibility}
For every \(\bmu=(\mu_n,\mu_p)^\top\in\mathbb R^2\) and every
\(\phi\in\mathbb R\), there exists a unique
\(\bc(\bmu,\phi)\in(0,\infty)^2\) such that
\[
    \mu_n
    =
    \ln c_n+1-\phi+f_{nn}c_n+f_{np}c_p,
\qquad
    \mu_p
    =
    \ln c_p+1+\phi+f_{np}c_n+f_{pp}c_p.
\]
Moreover, the map \((\bmu,\phi)\mapsto \bc(\bmu,\phi)\) is continuous.
\end{lem}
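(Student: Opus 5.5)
The plan is to view the map as the gradient of a strictly convex coercive function on the positive cone and invert it by convex duality. For fixed \(\phi\in\mathbb R\), absorb the constants by setting \(\bq:=(\mu_n-1+\phi,\;\mu_p-1-\phi)^\top\). The system then reads
\[
    \nabla g(\bc)=\bq,
    \qquad
    g(\bc):=c_n\ln c_n+c_p\ln c_p+\tfrac12\bc^\top F\bc .
\]
On \((0,\infty)^2\) the Hessian of \(g\) is \(\mathrm{diag}(c_n^{-1},c_p^{-1})+F\). Since \(F\) is positive definite, this Hessian is bounded below by \(\alpha_F I\). So \(g\) is strictly convex and \(\nabla g\) is strictly monotone: \((\nabla g(\bc)-\nabla g(\widetilde\bc))\cdot(\bc-\widetilde\bc)\ge\alpha_F|\bc-\widetilde\bc|^2\). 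Uniqueness follows at once from this inequality.

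For existence, minimize \(h(\bc):=g(\bc)-\bq\cdot\bc\) over the closed quadrant \([0,\infty)^2\), with \(0\ln0=0\). The function \(h\) is continuous there. It is also coercive, since \(h(\bc)\ge \tfrac{\alpha_F}{2}|\bc|^2-|\bq||\bc|-2e^{-1}\). Hence a minimizer \(\bc^*\) exists. The key step, and the only delicate one, is to show the minimizer lies in the open quadrant, so that the Euler–Lagrange equation \(\nabla h(\bc^*)=0\) holds.

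Suppose, say, \(c_n^*=0\). Consider the perturbation \(\bc^*+t e_n\) for small \(t>0\). Its one-sided derivative is
\[
    \frac{d}{dt}h(\bc^*+te_n)=\ln t+1+f_{nn}t+f_{np}c_p^*-q_n,
\]
which tends to \(-\infty\) as \(t\downarrow0\). So \(h\) strictly decreases along \(e_n\), contradicting minimality. The same argument applies to \(c_p\), including the corner \(\bc^*=0\). Thus \(\bc^*\in(0,\infty)^2\) and \(\nabla g(\bc^*)=\bq\). This is exactly the system with \(\bc(\bmu,\phi):=\bc^*\).

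For continuity, take \((\bmu^k,\phi^k)\to(\bmu,\phi)\), so \(\bq^k\to\bq\). By the coercivity bound, the minimizers \(\bc^k\) are bounded, uniformly in \(k\). Any cluster point \(\bar\bc\) minimizes the limiting \(h\), since \(h_k\to h\) locally uniformly on the closed quadrant. By uniqueness of that minimizer, \(\bar\bc=\bc(\bmu,\phi)\), so the whole sequence converges.

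Alternatively, continuity follows directly from the monotonicity estimate applied to \(\bc^k\) and \(\bc\), which gives \(\alpha_F|\bc^k-\bc|\le|\bq^k-\bq|\). This in fact shows the map is Lipschitz with constant \(\alpha_F^{-1}\) in \(\bq\), hence in \((\bmu,\phi)\). I expect no real obstacle beyond ruling out boundary minimizers, which the logarithmic singularity handles.
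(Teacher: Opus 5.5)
Your proof is correct and follows essentially the paper's route: minimize the strictly convex, coercive function over the closed quadrant, exclude boundary minimizers via the logarithmic singularity, and read off the Euler--Lagrange equations; where the paper relies on strict convexity and the implicit function theorem for uniqueness and continuity, you use the monotonicity bound $\alpha_F|\bc-\widetilde\bc|\le|\bq-\widetilde\bq|$, which gives both and even a global Lipschitz estimate. One bookkeeping slip: with $g$ built from $c\ln c$ the correct shift is $\bq=(\mu_n+\phi,\,\mu_p-\phi)^\top$ (your $\bq$ corresponds to the paper's $c(\ln c-1)$), which is harmless since your argument works for every $\bq\in\mathbb R^2$.
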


\begin{proof}
For fixed \((\bmu,\phi)\), consider the function
\[
\begin{aligned}
h(\bc)
=
c_n(\ln c_n-1)+c_p(\ln c_p-1)
+\frac12\bc^\top F\bc
+
(1-\phi-\mu_n)c_n
+
(1+\phi-\mu_p)c_p
\end{aligned}
\]
on \((0,\infty)^2\). Its Hessian is
\(
    D^2h(\bc)
    =
    \begin{pmatrix}
        1/c_n & 0\\
        0 & 1/c_p
    \end{pmatrix}
    +
    F,
\)
which is positive definite. Hence \(h\) is strictly convex. 
We extend \(h\) continuously to the closed cone \([0,\infty)^2\), using
\(c\ln c=0\) at \(c=0\). Since \(F\) is positive definite, the quadratic term dominates the linear
terms, and hence \(h(\bc)\to+\infty\) as \(|\bc|\to\infty\). Thus \(h\)
attains a minimum on \([0,\infty)^2\). We claim that this minimum cannot occur
on the boundary. Indeed, if a minimizer had \(c_n=0\), then for fixed \(c_p\)
and small \(\varepsilon>0\),
\[
h(\varepsilon,c_p)-h(0,c_p)
=
\varepsilon\ln\varepsilon
+
\varepsilon
\bigl(f_{np}c_p-\phi-\mu_n\bigr)
+
\frac12 f_{nn}\varepsilon^2 .
\]
Since \(\varepsilon\ln\varepsilon+C\varepsilon+O(\varepsilon^2)<0\) for all
sufficiently small \(\varepsilon>0\), we have
\(h(\varepsilon,c_p)<h(0,c_p)\), which is a contradiction. The same argument excludes the possibility of 
\(c_p=0\). Therefore the minimizer lies in \((0,\infty)^2\). The Euler--Lagrange equations
of this minimization problem are exactly the entropy-variable relations above.
Continuity follows from the implicit function theorem, since
\(D^2h(\bc)\) is positive definite.
\end{proof}

\medskip
\noindent\textbf{Finite-energy weak formulation.}  We now define the weak formulation used in the proof of
Theorem~\ref{thm:global-weak-intro}. The entropy-variable formulation is
natural at the approximation level, where the concentrations are positive.
For finite-energy limits, however, vacuum may occur, and the quantities
\(\nabla\ln c_i\), hence \(\nabla\mu_i\), are not suitable as primitive weak
objects. We therefore formulate the limiting equations in terms of a
square-root formulation of the weighted entropy gradients associated with
\eqref{eq:intro-dissipation-decomposition}.

\begin{defn}[Finite-energy weak solution]
\label{def:finite-energy-weak-solution}
Assume \textup{(A1)}--\textup{(A4)} and let \(T>0\). A triple
\(
    (c_n,c_p,\phi)
\)
is called a finite-energy weak solution on \((0,T)\) if
\[
    c_n,c_p\ge0,
    \qquad
    c_n,c_p
    \in
    L^\infty(0,T;L^2(\Omega))
    \cap
    L^{4/3}(0,T;W^{1,4/3}(\Omega)),
\]
\[
    \partial_t c_n,\partial_t c_p
    \in
    L^{4/3}\bigl(0,T;W^{1,4}(\Omega)'\bigr),
\]
and
\(
    \phi=\mathcal P(c_p-c_n)
\)
with the boundary conditions specified in \textup{(A3)}. Define
\(
    \chi_n
    :=
    \sqrt{\frac{\delta a c_n}{\omega(\bc)}},
    \;
    \chi_p
    :=
    \sqrt{\frac{\delta b c_p}{\omega(\bc)}}.
\)
We require
\(
    \chi_n,\chi_p
    \in
    L^{4/3}\bigl(0,T;W^{1,4/3}(\Omega)\bigr).
\)
With all gradients understood in the weak sense, define
\begin{equation}
\begin{aligned}
    \boldsymbol A_n
    &:=
    2\nabla\chi_n
    +
    \chi_n
    \nabla
    \bigl(
        \log\omega(\bc)
        -\phi
        +f_{nn}c_n
        +f_{np}c_p
    \bigr),
    \\
    \boldsymbol A_p
    &:=
    2\nabla\chi_p
    +
    \chi_p
    \nabla
    \bigl(
        \log\omega(\bc)
        +\phi
        +f_{np}c_n
        +f_{pp}c_p
    \bigr),
\end{aligned}
\label{eq:weak-weighted-entropy-gradients}
\end{equation}
and require
\(
    \boldsymbol A_n,\boldsymbol A_p
    \in L^2(\Omega_T;\mathbb R^d).
\)
The collective dissipation field is defined by
\begin{equation}
    \boldsymbol B
    :=
    \sqrt{\frac{b c_n}{\delta}}\,
    \boldsymbol A_n
    +
    \sqrt{\frac{a c_p}{\delta}}\,
    \boldsymbol A_p,
\label{eq:weak-collective-field}
\end{equation}
and is required to satisfy
\(
    \boldsymbol B
    \in L^2(\Omega_T;\mathbb R^d).
\)

The physical fluxes are defined by
\begin{equation}
\begin{aligned}
    \boldsymbol J_n
    :=
    -\chi_n\boldsymbol A_n
    -
    c_n\sqrt{\frac{ab}{\omega(\bc)}}\,
    \boldsymbol B,
    \quad
    \boldsymbol J_p
    :=
    -\chi_p\boldsymbol A_p
    -
    c_p\sqrt{\frac{ab}{\omega(\bc)}}\,
    \boldsymbol B.
\end{aligned}
\label{eq:weak-physical-fluxes}
\end{equation}
Then
\(
    \boldsymbol J_n,\boldsymbol J_p
    \in L^{4/3}(\Omega_T;\mathbb R^d),
\)
and, for every
\(
    \varphi_i\in L^4(0,T;W^{1,4}(\Omega)),
    \; i=n,p,
\)
one has
\[
    \int_0^T
    \langle\partial_t c_i,\varphi_i\rangle\,dt
    -
    \int_{\Omega_T}
    \boldsymbol J_i\cdot\nabla\varphi_i\,dxdt
    =
    0.
\]
The initial data are attained in the sense that
\[
    c_i(t)\to c_{i,0}
    \quad\text{in }W^{1,4}(\Omega)'
    \quad\text{as }t\downarrow0,
    \qquad i=n,p.
\]
Finally, for a.e. \(t\in(0,T)\),
\[
    E(\bc(t))
    +
    \int_0^t\int_\Omega
    \left(
        |\boldsymbol A_n|^2
        +
        |\boldsymbol A_p|^2
        +
        |\boldsymbol B|^2
    \right)\,dxds
    \le
    E(\bc_0).
\]
\end{defn}

\begin{remark}[Consistency and vacuum compatibility]
\label{rem:weak-dissipation-fields}
For smooth strictly positive solutions, the chain rule gives
\(
    2\nabla\chi_n
    =
    \chi_n
    \bigl(
        \nabla\log c_n
        -
        \nabla\log\omega(\bc)
    \bigr), \)
and \(
    2\nabla\chi_p
    =
    \chi_p
    \bigl(
        \nabla\log c_p
        -
        \nabla\log\omega(\bc)
    \bigr).
\)
Hence, by \eqref{eq:weak-weighted-entropy-gradients},
\(
    \boldsymbol A_n
    =
    \sqrt{\frac{\delta a c_n}{\omega(\bc)}}\,
    \nabla\mu_n,
    \;
    \boldsymbol A_p
    =
    \sqrt{\frac{\delta b c_p}{\omega(\bc)}}\,
    \nabla\mu_p.
\)
It follows that \(\boldsymbol B\) and the fluxes in
\eqref{eq:weak-collective-field}--\eqref{eq:weak-physical-fluxes} reduce to
their smooth constitutive expressions,
\[
    \boldsymbol J=-M(\bc)\nabla\bmu,
\]
and
\[
    \mathcal D(\bc)
    =
    \int_\Omega
    \left(
        |\boldsymbol A_n|^2
        +
        |\boldsymbol A_p|^2
        +
        |\boldsymbol B|^2
    \right)\,dx.
\]

The formulation remains meaningful on vacuum sets. Indeed, since
\(    \{\chi_n=0\}=\{c_n=0\},
    \;
    \{\chi_p=0\}=\{c_p=0\},
\)
the Sobolev gradients satisfy
\(
    \nabla\chi_i=0
    \;\;\text{a.e. on }\{c_i=0\}.
\)
Therefore,
\(
    \boldsymbol A_i=0
    \;\;\text{a.e. on }\{c_i=0\},
    \text{ for } i=n,p.
\)
Thus, the fields
\(\boldsymbol A_n,\boldsymbol A_p,\boldsymbol B\) are intrinsically determined
by \((c_n,c_p,\phi)\), rather than being additional independent solution
variables.
\end{remark}

\subsection{A priori estimates}
\label{subsec:apriori-estimates}

The following estimates form the compactness mechanism of the existence
proof. The entropy production gives a weighted gradient bound for the
concentrations; combined with the \(L^\infty(0,T;L^2(\Omega))\)-control from
Lemma~\ref{lem:energy-coercivity}, this yields the Sobolev regularity and flux
bounds required in Definition~\ref{def:finite-energy-weak-solution}.

We first extract from the entropy production the weighted gradient control
that replaces the unavailable species-wise \(H^1\)-bound.

\begin{lem}[Entropy-production estimate]
\label{lem:global-entropy-production}
Let \(\bc=(c_n,c_p)^\top\) be a smooth strictly positive admissible state, and
let \(\phi=\mathcal P(c_p-c_n)\). Then there exist constants
\(\kappa_{\rm w}>0\) and \(C>0\), depending only on the fixed data of the problem,
such that
\[
\mathcal D(\bc)
\ge
\kappa_{\rm w}
\int_\Omega
\frac{
|\nabla c_n|^2+|\nabla c_p|^2
}
{1+c_n+c_p}
\,dx
-
C\bigl(1+E(\bc)\bigr)^{3/2}.
\]
\end{lem}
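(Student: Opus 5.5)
Since \(\mathcal D(\bc)\) is a sum of nonnegative terms, I will drop the collective term \(ab|c_n\nabla\mu_n+c_p\nabla\mu_p|^2/\omega\) and work only with the diagonal part
\[
\mathcal D(\bc)\ge \delta\int_\Omega \frac{a c_n|\nabla\mu_n|^2+b c_p|\nabla\mu_p|^2}{\omega(\bc)}\,dx .
\]
Since \(\omega\le C_0(1+c_n+c_p)\) with \(C_0=\max(\delta,a,b)\), this bounds \(\mathcal D\) from below by a multiple of
\[
\int_\Omega \frac{c_n|\nabla\mu_n|^2+c_p|\nabla\mu_p|^2}{1+c_n+c_p}\,dx .
\]
Next I split each potential as \(\nabla\mu_i=\nabla g_i(\bc)\pm\nabla\phi\). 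Here \(g_n=\ln c_n+f_{nn}c_n+f_{np}c_p\) and \(g_p=\ln c_p+f_{np}c_n+f_{pp}c_p\). I then use \(|x+y|^2\ge\frac12|x|^2-|y|^2\), which gives
\[
\sum_i c_i|\nabla\mu_i|^2\ \ge\ \tfrac12\sum_i c_i|\nabla g_i|^2-(c_n+c_p)|\nabla\phi|^2 .
\]

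The core algebraic step is the pointwise lower bound on \(\sum_i c_i|\nabla g_i|^2\). Writing \(C=\mathrm{diag}(c_n,c_p)\), we have \(\nabla g=(C^{-1}+F)\nabla\bc\). Hence
\[
\sum_i c_i|\nabla g_i|^2=\nabla\bc^\top (C^{-1}+F)\,C\,(C^{-1}+F)\nabla\bc=\nabla\bc^\top\bigl(C^{-1}+2F+FCF\bigr)\nabla\bc .
\]
Each of the three matrices is positive semidefinite, and \(2F\ge 2\alpha_F I\). So the quadratic form is at least \(2\alpha_F(|\nabla c_n|^2+|\nabla c_p|^2)\). Dividing by \(1+c_n+c_p\) yields the weighted gradient term with \(\kappa_{\rm w}\) proportional to \(\delta\min(a,b)\alpha_F/C_0\). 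This step is where positive definiteness of \(F\) enters. The \(C^{-1}\) part alone would only give \(|\nabla c_i|^2/c_i\), which does not dominate the target weight for large \(c_i\).

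It remains to control the electric error
\[
\int_\Omega \frac{c_n+c_p}{1+c_n+c_p}|\nabla\phi|^2\,dx\le \|\nabla\phi\|_{L^2}^2 .
\]
By Lemma~\ref{lem:poisson-operator}, \(\frac12\|\nabla\phi\|_{L^2}^2\) is part of the energy. Lemma~\ref{lem:energy-coercivity} makes each part of \(E\) bounded by \(E+C\): \(\|\rho\|_{L^2}^2\le C(1+E)\), and the entropy and steric parts are bounded below. Therefore \(\|\nabla\phi\|_{L^2}^2\le C(1+E)\le C(1+E)^{3/2}\), using \(1+E\ge c>0\) after adjusting constants. The exponent \(3/2\) is thus not sharp here. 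If the authors had in mind a cruder route, for instance \(\|c\|_{L^2}\|\nabla\phi\|_{L^4}^2\) via the \(L^6\) bound in (A3), it also gives a power at most \(3/2\).

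The only delicate point I anticipate is normalizing so that \(1+E\) is positive, given that \(E\) may be negative down to \(-C\). This is handled by replacing \(1+E\) with \(1+E+C\), or by absorbing constants. Otherwise the argument is a direct pointwise computation.
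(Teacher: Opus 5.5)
Your argument is correct, and its algebraic core matches the paper's. The paper asserts, as a ``direct computation'', that $H(\bc)M(\bc)H(\bc)\ge\kappa_{\rm w}(1+c_n+c_p)^{-1}I$ with $H(\bc)=\operatorname{diag}(1/c_n,1/c_p)+F$. Your identity $(C^{-1}+F)C(C^{-1}+F)=C^{-1}+2F+FCF\ge 2\alpha_F I$, combined with $M(\bc)\ge\delta\,\omega(\bc)^{-1}\operatorname{diag}(ac_n,bc_p)$, is exactly what proves that claim.

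You genuinely differ in how you treat the electrostatic coupling. The paper applies Young's inequality in the full $M(\bc)$-inner product, so its error term carries the weight $\bz^\top M(\bc)\bz$. This weight grows linearly in $c_n+c_p$ because of the collective contribution $ab(c_n-c_p)^2/\omega(\bc)$. That growth is what forces the H\"older step with $\|\nabla\phi\|_{L^6}\le C_\phi\|c_p-c_n\|_{L^2}$ from (A3), and it produces the power $3/2$.

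You discard the collective mode before splitting $\nabla\mu_i$, so your error has the bounded weight $(c_n+c_p)/(1+c_n+c_p)$. It is then controlled by the electrostatic part of the energy itself, $\|\nabla\phi\|_{L^2}^2\le 2(E(\bc)+C)$. Your route is therefore more elementary, since it needs no Poisson regularity, and it gives the sharper remainder $C(1+E)$.

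Nothing used later in the paper is lost. The uniform bounds and the local and sublevel entropy--entropy production arguments only need the diagonal part together with bounded or small energy. The collective mode is used for gradient control only in the rank-one discussion, where $F$ is not positive definite.

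Your caveat about the sign of $1+E$ is well founded. It applies equally to the paper's step $\|c_n\|_{L^2}^2+\|c_p\|_{L^2}^2\le C(1+E(\bc))$, and reading $1+E$ as $1+E-\inf E$ repairs both versions.
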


\begin{proof}
Set
\(
    H(\bc)
    :=
    \begin{pmatrix}
        1/c_n & 0\\
        0 & 1/c_p
    \end{pmatrix}
    +
    F,
    \;
    \bz:=(-1,1)^\top .
\)
Then
\(    \nabla\bmu
    =
    H(\bc)\nabla\bc+\bz\,\nabla\phi .
\)
All pointwise matrix inequalities below are interpreted in \(\mathbb R^2\) and
then summed over the spatial directions.

We first establish the algebraic coercivity of the mobility combined with the
entropy Hessian. From \eqref{eq:intro-mobility}, for \(c_n,c_p>0\), it holds that
\[
M(\bc)^{-1}
=
\begin{pmatrix}
\dfrac{\delta+a c_p}{a\delta c_n}
&
-\dfrac1\delta
\\[1mm]
-\dfrac1\delta
&
\dfrac{\delta+b c_n}{b\delta c_p}
\end{pmatrix}.
\]

Since \(F\) is positive definite, a direct computation from the explicit forms of \(H(\bc)\) and \(M(\bc)\) gives 
\[ H(\bc)^{-1}M(\bc)^{-1}H(\bc)^{-1} \le C(1+c_n+c_p)I , \] with \(C=C(a,b,\delta,F)\).
Equivalently, after decreasing \(\kappa_{\rm w}>0\) if necessary,
\[
    H(\bc)M(\bc)H(\bc)
    \ge
    \frac{\kappa_{\rm w}}{1+c_n+c_p}I.
\]
Using Young's inequality in the \(M(\bc)\)-inner product, we obtain
\[
\begin{aligned}
\nabla\bmu^\top M(\bc)\nabla\bmu
&=
\bigl(H(\bc)\nabla\bc+\bz\nabla\phi\bigr)^\top
M(\bc)
\bigl(H(\bc)\nabla\bc+\bz\nabla\phi\bigr)
\\
&\ge
\frac12
\nabla\bc^\top H(\bc)M(\bc)H(\bc)\nabla\bc
-
(\bz\nabla\phi)^\top M(\bc)(\bz\nabla\phi)
\\
&\ge
\kappa_{\rm w}
\frac{
|\nabla c_n|^2+|\nabla c_p|^2
}
{1+c_n+c_p}
-
C(1+c_n+c_p)|\nabla\phi|^2 ,
\end{aligned}
\]
where we have used
\[
    \bz^\top M(\bc)\bz
    =
    \frac{
    \delta(a c_n+b c_p)+ab(c_n-c_p)^2
    }
    {\omega(\bc)}
    \le
    C(1+c_n+c_p).
\]
Integrating over \(\Omega\) gives
\[
\mathcal D(\bc)
\ge
\kappa_{\rm w}
\int_\Omega
\frac{
|\nabla c_n|^2+|\nabla c_p|^2
}
{1+c_n+c_p}
\,dx
-
C
\int_\Omega
(1+c_n+c_p)|\nabla\phi|^2\,dx .
\]

It remains to estimate the electrostatic term. By using \textup{(A3)}, one finds
\[
    \|\nabla\phi\|_{L^6(\Omega)}
    \le
    C_\phi\|c_p-c_n\|_{L^2(\Omega)}.
\]
Using H\"older's inequality, the boundedness of \(\Omega\), and \(d\le3\), we have
\[
\begin{aligned}
\int_\Omega(1+c_n+c_p)|\nabla\phi|^2\,dx
&\le
\|\nabla\phi\|_{L^2(\Omega)}^2
+
\|c_n+c_p\|_{L^{3/2}(\Omega)}
\|\nabla\phi\|_{L^6(\Omega)}^2
\\
&\le
C
\bigl(1+\|c_n\|_{L^2(\Omega)}+\|c_p\|_{L^2(\Omega)}\bigr)
\|c_p-c_n\|_{L^2(\Omega)}^2 .
\end{aligned}
\]
By Lemma~\ref{lem:energy-coercivity}, one finds
\[
    \|c_n\|_{L^2(\Omega)}^2+\|c_p\|_{L^2(\Omega)}^2
    \le
    C\bigl(1+E(\bc)\bigr).
\]
Hence
\[
    \int_\Omega(1+c_n+c_p)|\nabla\phi|^2\,dx
    \le
    C\bigl(1+E(\bc)\bigr)^{3/2}.
\]
The claim follows.
\end{proof}

This weighted estimate is weaker than an \(L^2(0,T;H^1(\Omega))\)-bound, but
it is sufficient to obtain the Sobolev regularity used in the weak
formulation.

\begin{lem}[Weighted gradient estimate]
\label{lem:global-weighted-gradient}
Assume that
\(
    c_n,c_p\ge0,
    \;
    c_n,c_p\in L^\infty(0,T;L^2(\Omega)),
\)
and
\(
    \int_0^T\int_\Omega
    \frac{|\nabla c_n|^2+|\nabla c_p|^2}
    {1+c_n+c_p}
    \,dxdt
    <\infty .
\)
Then
\(
    c_n,c_p\in L^{4/3}(0,T;W^{1,4/3}(\Omega)).
\)
\end{lem}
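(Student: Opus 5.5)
The proof is a weighted Hölder inequality applied pointwise in time, followed by integration in time. Set $w:=1+c_n+c_p$ and, for $i=n,p$, write
\[
|\nabla c_i| = \frac{|\nabla c_i|}{w^{1/2}}\, w^{1/2}.
\]
Hölder's inequality in space with exponents $3/2$ and $3$ (so that $\frac{4}{3}\cdot\frac{3}{2}=2$ and $\frac{4}{3}\cdot 3=4$) gives, for a.e. $t$,
\[
\int_\Omega |\nabla c_i|^{4/3}\,dx
\le
\Bigl(\int_\Omega \frac{|\nabla c_i|^2}{w}\,dx\Bigr)^{2/3}
\Bigl(\int_\Omega w^{2}\,dx\Bigr)^{1/3}.
\]
The second factor is $\|w(t)\|_{L^2(\Omega)}^{2/3}$. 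It is bounded uniformly in $t$ because $c_n,c_p\in L^\infty(0,T;L^2(\Omega))$ and $\Omega$ is bounded. Integrating over $(0,T)$ then yields
\[
\int_0^T\!\!\int_\Omega |\nabla c_i|^{4/3}\,dx\,dt
\le
\sup_{t}\|w(t)\|_{L^2}^{2/3}
\int_0^T\Bigl(\int_\Omega \frac{|\nabla c_i|^2}{w}\,dx\Bigr)^{2/3}dt.
\]
By Hölder in time with exponents $3/2$ and $3$, the last integral is at most $T^{1/3}$ times the $2/3$ power of the weighted integral, which is finite by hypothesis. Note that the time integrability is actually $L^{4/3}$ in time with room to spare, since the spatial second factor is bounded uniformly rather than merely integrable.

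For the zeroth-order part, $\|c_i\|_{L^{4/3}(\Omega)}\le C\|c_i\|_{L^2(\Omega)}$ on the bounded domain, so $c_i\in L^\infty(0,T;L^{4/3})\subset L^{4/3}(0,T;L^{4/3})$. Together with the gradient bound this gives $c_i\in L^{4/3}(0,T;W^{1,4/3}(\Omega))$.

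The one point requiring care is the meaning of $\nabla c_i$ in the hypothesis. I would read the hypothesis as asserting that $c_i(t)$ has a locally integrable weak gradient for a.e. $t$, which is the sense in which it is applied to the approximate (smooth, positive) solutions. The Hölder argument then shows this weak gradient lies in $L^{4/3}$, so $c_i(t)\in W^{1,4/3}$. Measurability in time of $t\mapsto c_i(t)\in W^{1,4/3}$ follows from the joint measurability of $c_i$ and $\nabla c_i$ on $\Omega_T$.

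If only the limit is considered, the statement is applied after passing to limits. In that case I would apply the bound uniformly to the approximations $c_i^k$ and conclude by weak lower semicontinuity of the $L^{4/3}(0,T;W^{1,4/3})$ norm. This avoids having to define the weighted integral for non-smooth functions.
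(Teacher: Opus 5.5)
Your proof is correct and is essentially the paper's argument. The paper writes $|\nabla c_i|^{4/3}=\bigl(|\nabla c_i|^2/(1+c_n+c_p)\bigr)^{2/3}(1+c_n+c_p)^{2/3}$ and applies H\"older with exponents $3/2$ and $3$ once on all of $\Omega_T$, whereas you split this into a spatial H\"older step followed by a temporal one. As a side benefit, your two-step version yields the pointwise-in-time bound $\|\nabla c_i(t)\|_{L^{4/3}(\Omega)}\le\|1+c_n+c_p\|_{L^\infty(0,T;L^2(\Omega))}^{1/2}\bigl(\int_\Omega\frac{|\nabla c_i|^2}{1+c_n+c_p}\,dx\bigr)^{1/2}$, which gives the stronger conclusion $c_i\in L^2(0,T;W^{1,4/3}(\Omega))$; the paper does not need this.
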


\begin{proof}
For \(i=n,p\),
\[
|\nabla c_i|^{4/3}
=
\left(
\frac{|\nabla c_i|^2}{1+c_n+c_p}
\right)^{2/3}
(1+c_n+c_p)^{2/3}.
\]
Applying H\"older's inequality with exponents \(3/2\) and \(3\), we get
\[
\begin{aligned}
\int_0^T\int_\Omega |\nabla c_i|^{4/3}\,dxdt
&\le
\left(
\int_0^T\int_\Omega
\frac{|\nabla c_i|^2}{1+c_n+c_p}
\,dxdt
\right)^{2/3}
\left(
\int_0^T\int_\Omega
(1+c_n+c_p)^2
\,dxdt
\right)^{1/3}.
\end{aligned}
\]
The second factor is finite because
\(c_n,c_p\in L^\infty(0,T;L^2(\Omega))\). Together with
\(c_n,c_p\in L^\infty(0,T;L^2(\Omega))\), this gives the desired
\(L^{4/3}(0,T;W^{1,4/3}(\Omega))\)-bound.
\end{proof}

The same \(L^\infty(0,T;L^2(\Omega))\)-control also  guarantees sufficient integrability of the  physical fluxes so that the continuity equations are well defined in the weak formulation.

\begin{lem}[Flux estimate]
\label{lem:global-flux-estimate}
Assume that
\(    c_n,c_p\ge0,
    \;
    c_n,c_p\in L^\infty(0,T;L^2(\Omega)),
\)
and let
\(
    \boldsymbol A_n,\boldsymbol A_p,\boldsymbol B
    \in L^2(\Omega_T;\mathbb R^d).
\)
Define the physical fluxes by
\(
    \boldsymbol J_n
    =
    -
    \sqrt{\frac{\delta a c_n}{\omega(\bc)}}\,\boldsymbol A_n
    -
    c_n\sqrt{\frac{ab}{\omega(\bc)}}\,\boldsymbol B,
\)
and
\(
    \boldsymbol J_p
    =
    -
    \sqrt{\frac{\delta b c_p}{\omega(\bc)}}\,\boldsymbol A_p
    -
    c_p\sqrt{\frac{ab}{\omega(\bc)}}\,\boldsymbol B.
\)
Then
\(
    \boldsymbol J_n,\boldsymbol J_p
    \in L^{4/3}(\Omega_T;\mathbb R^d).
\)
More precisely,
\[
\begin{aligned}
\|\boldsymbol J_n\|_{L^{4/3}(\Omega_T)}
+
\|\boldsymbol J_p\|_{L^{4/3}(\Omega_T)}
 \le
C
\bigl(
1+\|c_n\|_{L^\infty(0,T;L^2)}^{1/2}
&+\|c_p\|_{L^\infty(0,T;L^2)}^{1/2}
\bigr)
\\
&
\bigl(
\|\boldsymbol A_n\|_{L^2(\Omega_T)}
+
\|\boldsymbol A_p\|_{L^2(\Omega_T)}
+
\|\boldsymbol B\|_{L^2(\Omega_T)}
\bigr).
\end{aligned}
\]
\end{lem}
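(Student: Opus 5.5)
The plan is to estimate the two terms in each flux separately with pointwise bounds on the coefficient functions, and then apply Hölder's inequality in space–time with exponents chosen to land in \(L^{4/3}\). We treat \(\boldsymbol J_n\); the argument for \(\boldsymbol J_p\) is identical, with \(a\) and \(b\) exchanged.

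\emph{Pointwise coefficient bounds.} Since \(\omega(\bc)=\delta+ac_p+bc_n\ge\delta\), we have
\[
\frac{\delta a c_n}{\omega(\bc)}\le a c_n .
\]
Also, because \(\omega(\bc)\ge b c_n\),
\[
c_n^2\,\frac{ab}{\omega(\bc)}\le \frac{ab\,c_n^2}{b c_n}=a c_n .
\]
So both coefficients are bounded pointwise by \(\sqrt{a c_n}\):
\[
\sqrt{\frac{\delta a c_n}{\omega(\bc)}}\le \sqrt{a}\,c_n^{1/2},
\qquad
c_n\sqrt{\frac{ab}{\omega(\bc)}}\le \sqrt{a}\,c_n^{1/2}.
\]
This is the key point: the drag coefficient grows only like \(c_n^{1/2}\), not \(c_n\). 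The reason is that \(\omega\) itself grows linearly, and this cancellation is what keeps the exponent \(4/3\) rather than something worse.

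\emph{Hölder in space–time.} With these bounds,
\[
|\boldsymbol J_n|\le \sqrt{a}\,c_n^{1/2}\bigl(|\boldsymbol A_n|+|\boldsymbol B|\bigr).
\]
Apply Hölder with \(\tfrac34=\tfrac14+\tfrac12\): put \(c_n^{1/2}\) in \(L^4(\Omega_T)\) and \(\boldsymbol A_n\), \(\boldsymbol B\) in \(L^2(\Omega_T)\). Then
\[
\|c_n^{1/2}\|_{L^4(\Omega_T)}=\|c_n\|_{L^2(\Omega_T)}^{1/2}\le T^{1/4}\|c_n\|_{L^\infty(0,T;L^2)}^{1/2},
\]
which gives
\[
\|\boldsymbol J_n\|_{L^{4/3}(\Omega_T)}\le C\,T^{1/4}\|c_n\|_{L^\infty(0,T;L^2)}^{1/2}\bigl(\|\boldsymbol A_n\|_{L^2(\Omega_T)}+\|\boldsymbol B\|_{L^2(\Omega_T)}\bigr).
\]

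\emph{Conclusion.} Summing over \(i=n,p\) and absorbing \(T^{1/4}\) and \(\sqrt a,\sqrt b\) into \(C\) yields the stated estimate; the additive \(1\) in the prefactor is harmless. Measurability and vacuum cause no trouble, since all coefficients are continuous, nonnegative functions of \(\bc\) that vanish where the relevant concentration vanishes.

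There is no real obstacle here. The only nontrivial step is noticing the cancellation \(c_n^2/\omega\le c_n/b\) in the drag term; a cruder bound such as \(\omega\ge\delta\) alone would give a coefficient of order \(c_n\) and only an \(L^1\)-type flux bound.
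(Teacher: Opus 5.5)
Your proposal is correct and follows essentially the same route as the paper: the pointwise bounds $\delta a c_n/\omega(\bc)\le a c_n$ and $ab\,c_n^2/\omega(\bc)\le a c_n$ (the latter using $\omega(\bc)\ge b c_n$) make every flux coefficient a constant multiple of $\sqrt{c_n}$ or $\sqrt{c_p}$. H\"older's inequality $L^4\times L^2\to L^{4/3}$, together with $\|\sqrt{c_n}\|_{L^4(\Omega_T)}\le T^{1/4}\|c_n\|_{L^\infty(0,T;L^2)}^{1/2}$, then closes the estimate exactly as in the paper, where you simply make explicit the constant that the paper writes as $ab\,c_n^2/\omega(\bc)\le C c_n$.
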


\begin{proof}
It is sufficient to estimate the two terms in each flux. Since
\(\omega(\bc)\ge\delta\), it holds
\(
    \frac{\delta a c_n}{\omega(\bc)}
    \le
    a c_n,
    \;
    \frac{\delta b c_p}{\omega(\bc)}
    \le
    b c_p .
\)
Moreover, since
\(\omega(\bc)=\delta+a c_p+b c_n\), we have
\(
    \frac{ab c_n^2}{\omega(\bc)}
    \le
    C c_n,
    \;
    \frac{ab c_p^2}{\omega(\bc)}
    \le
    C c_p .
\)
Thus, each coefficient in the flux representation is bounded by a constant
multiple of \(\sqrt{c_n}\) or \(\sqrt{c_p}\). For example,
\[
\left\|
\sqrt{\frac{\delta a c_n}{\omega(\bc)}}\,\boldsymbol A_n
\right\|_{L^{4/3}(\Omega_T)}
\le
C
\|\sqrt{c_n}\|_{L^4(\Omega_T)}
\|\boldsymbol A_n\|_{L^2(\Omega_T)}.
\]
Since
\[
    \|\sqrt{c_n}\|_{L^4(\Omega_T)}
    =
    \left(
    \int_0^T\int_\Omega c_n^2\,dxdt
    \right)^{1/4}
    \le
    T^{1/4}
    \|c_n\|_{L^\infty(0,T;L^2(\Omega))}^{1/2},
\]
this term belongs to \(L^{4/3}(\Omega_T)\). The remaining terms are treated in
the same way. This proves the estimate.
\end{proof}

These estimates will be applied to the entropy-variable approximations
constructed in the next subsection.


\subsection{Entropy-variable approximation}
\label{subsec:entropy-variable-approximation}

We now construct approximate solutions by an implicit Euler scheme in entropy
variables. The scheme is based on the operator form of
\eqref{eq:intro-system}, with the physical flux convention
\(\boldsymbol J=-M(\bc)\nabla\bmu\). By
Lemma~\ref{lem:global-entropy-invertibility}, the inverse entropy-variable map
\((\bmu,\phi)\mapsto\bc(\bmu,\phi)\) takes values in the positive cone; hence
the logarithmic entropy is well-defined at the approximate level.

Let \(\ell>d/2+1\) be an integer. For scalar functions
\(\xi,\eta\in H^\ell(\Omega)\), define
\begin{align}\label{bilinearform_b}
    b(\xi,\eta)
    :=
    \sum_{|\beta|\le \ell}
    \int_\Omega D^\beta\xi\,D^\beta\eta\,dx.
\end{align}
For vector-valued functions
\(\bxi=(\xi_n,\xi_p)^\top\) and
\(\boldsymbol\eta=(\eta_n,\eta_p)^\top\), we set
\[
    b(\bxi,\boldsymbol\eta)
    :=
    b(\xi_n,\eta_n)+b(\xi_p,\eta_p).
\]
Since \(\ell>d/2+1\), we have the continuous embedding
\(
    H^\ell(\Omega)\hookrightarrow W^{1,\infty}(\Omega),
\)
and the compact embedding
\(
    H^\ell(\Omega)\hookrightarrow L^\infty(\Omega).
\)

Let \(\tau>0\) and \(\varepsilon>0\). Given an admissible nonnegative state
\(
    \bc^{k-1}=(c_n^{k-1},c_p^{k-1})^\top,
\)
we seek
\[
    \bmu^k=(\mu_n^k,\mu_p^k)^\top\in H^\ell(\Omega)^2,
    \qquad
    \phi^k,
    \qquad
    \bc^k=\bc(\bmu^k,\phi^k)\in(0,\infty)^2,
\]
such that
\(
    -\Delta\phi^k=c_p^k-c_n^k
\)
with the boundary conditions specified in \textup{(A3)}, and
\begin{equation}
\frac1\tau
\int_\Omega
(c_i^k-c_i^{k-1})\eta_i\,dx
+
\int_\Omega
\bigl(M(\bc^k)\nabla\bmu^k\bigr)_i\cdot\nabla\eta_i\,dx
+
\varepsilon b(\mu_i^k,\eta_i)
=
0
\label{eq:global-regularized-discrete}
\end{equation}
for all \(\eta_i\in H^\ell(\Omega)\), \(i=n,p\).

In the pure Neumann case, the construction is performed in the mass-constrained setting. We write
\(
    \mu_i^k=\widetilde\mu_i^k+\zeta_i^k,
    \;
    \int_\Omega \widetilde\mu_i^k\,dx=0,
\)
and impose \eqref{eq:global-regularized-discrete} on the zero-mean test
space. The constants \(\zeta_i^k\) are chosen so that the species masses are
preserved:
\begin{equation}
    \int_\Omega c_i^k\,dx
    =
    \int_\Omega c_i^{k-1}\,dx,
    \qquad i=n,p .
\label{eq:global-discrete-mass-conservation}
\end{equation}
Consequently,
\[
    \int_\Omega(c_p^k-c_n^k)\,dx
    =
    \int_\Omega(c_p^{k-1}-c_n^{k-1})\,dx,
\]
so the compatibility condition for the Neumann Poisson problem is preserved by the evolution, provided it holds initially. The potential is normalized by
\(
    \int_\Omega\phi^k\,dx=0.
\)
In the entropy estimates below, the constant parts of the entropy variables
do not contribute because of the mass constraints
\eqref{eq:global-discrete-mass-conservation}.

\begin{lem}[Solvability of the regularized problem]
\label{lem:global-regularized-solvability}
For every \(\tau,\varepsilon>0\) and every admissible
\(\bc^{k-1}\), the regularized problem
\eqref{eq:global-regularized-discrete}, with the mass-constrained
interpretation above in the pure Neumann case, admits a solution.
\end{lem}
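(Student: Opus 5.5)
The plan is the standard Leray--Schauder fixed-point argument from the boundedness-by-entropy method, applied to a linearized version of \eqref{eq:global-regularized-discrete} in the entropy variables.

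\textbf{The map.} Fix \(\sigma\in[0,1]\) and \(\bar\bmu\in L^\infty(\Omega)^2\), using the compact embedding \(H^\ell(\Omega)\hookrightarrow L^\infty(\Omega)\). The potential \(\phi\) depends on \(\bc\), so I define \(\bar\bc\) and \(\bar\phi\) jointly as follows. In a first step, \(\bar\bc\) is the unique minimizer of the strictly convex functional
\[
\int_\Omega\Bigl[\textstyle\sum_i c_i(\ln c_i-1)+\tfrac12\bc^\top F\bc-(\bar\bmu-1)\cdot\bc\Bigr]dx+\tfrac12\langle\rho,\mathcal P\rho\rangle ,
\]
in the Neumann case under the mass constraints, with the Lagrange multipliers furnishing the constants \(\zeta_i\). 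Its Euler--Lagrange equations are the pointwise relations of Lemma~\ref{lem:global-entropy-invertibility} with \(\bar\phi=\mathcal P(\bar c_p-\bar c_n)\). Positivity and pointwise bounds for \(\bar\bc\) follow from \(\bar\bmu\in L^\infty\), \(\bar\phi\in L^\infty\) (by \((A3)\)), and the argument in that lemma; continuity in \(\bar\bmu\) follows from strict convexity. Then \(M(\bar\bc)\) is bounded and positive semidefinite. In a second step, I solve the linear problem
\[
a(\bmu,\boldsymbol\eta):=\int_\Omega \nabla\boldsymbol\eta^\top M(\bar\bc)\nabla\bmu\,dx+\varepsilon b(\bmu,\boldsymbol\eta)=-\frac{\sigma}{\tau}\int_\Omega(\bar\bc-\bc^{k-1})\cdot\boldsymbol\eta\,dx
\]
for \(\bmu\in H^\ell(\Omega)^2\), in the Neumann case on the zero-mean space. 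Since \(b\) is coercive on \(H^\ell\) and the \(M\)-term is nonnegative, Lax--Milgram gives a unique solution. This defines \(S(\bar\bmu,\sigma)=\bmu\). The map \(S\) is continuous by the continuity of \(\bar\bmu\mapsto\bar\bc\) and dominated convergence. It is compact because its image is bounded in \(H^\ell\), which embeds compactly into \(L^\infty\). Finally, \(S(\cdot,0)=0\).

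\textbf{Uniform bound on fixed points.} Let \(\bmu=S(\bmu,\sigma)\). Test with \(\boldsymbol\eta=\bmu\) and use the convexity inequality Lemma~\ref{lem:global-convexity}, in the form \(\int(\bc-\bc^{k-1})\cdot\bmu\ge E(\bc)-E(\bc^{k-1})\). In the Neumann case the constants \(\zeta_i\) drop out because of the mass constraint. This gives
\[
\sigma E(\bc)+\tau\int_\Omega\nabla\bmu^\top M\nabla\bmu\,dx+\varepsilon\tau\, b(\bmu,\bmu)\le\sigma E(\bc^{k-1}),
\]
so \(\|\bmu\|_{H^\ell}\le C(\tau,\varepsilon,E(\bc^{k-1}))\) uniformly in \(\sigma\). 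In the Neumann case, the constant part of \(\bmu\) must also be bounded. I expect this to follow from the mass constraint together with the \(L^\infty\)-bound on \(\widetilde\mu_i\) and \(\phi\): the constraint fixes \(\int c_i\), and \(c_i\) is monotone in \(\zeta_i\). This shows that \(\zeta_i\) is bounded.

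\textbf{Conclusion and main obstacle.} The Leray--Schauder theorem now gives a fixed point at \(\sigma=1\), which solves \eqref{eq:global-regularized-discrete}. The main obstacle is the coupling through \(\phi\) in the Neumann mass-constrained case. One has to verify that the joint constrained minimization produces a continuous map \(\bar\bmu\mapsto(\bar\bc,\bar\phi,\boldsymbol\zeta)\) with \(\bar\bc\) strictly positive and bounded. The first thing I would try is to obtain existence and uniqueness from strict convexity and coercivity, using Lemma~\ref{lem:energy-coercivity}. Pointwise bounds would then come from the Euler--Lagrange equations via \(\ln c_i\le \bar\mu_i+|\bar\phi|+|\zeta_i|\) and \(c_i\ge\exp(-C)\), after \(L^\infty\)-bounds on \(\bar\phi\) are obtained from the \(L^2\)-bound through \(H^2\hookrightarrow L^\infty\).
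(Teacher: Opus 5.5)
Your proposal is correct and is essentially the paper's proof. Both use Leray--Schauder on $L^\infty(\Omega)^2$ with the entropy-variable reconstruction, a Lax--Milgram regularized linear problem, compactness from $H^\ell(\Omega)\hookrightarrow L^\infty(\Omega)$, and a uniform bound on fixed points from testing with $\bmu$ and Lemma~\ref{lem:global-convexity}, with the Neumann constants dropping out by the mass constraints. The only variation is that you obtain $(\bar\bc,\bar\phi)$ and the constants $\zeta_i$ from one constrained strictly convex minimization, rather than the paper's pointwise inversion followed by a monotone semilinear Poisson problem; this is an equivalent and slightly cleaner way to get uniqueness and continuity. One small slip: your bound $\ln c_i\le\bar\mu_i+|\bar\phi|+|\zeta_i|$ drops $-(F\bc)_i$, which is not allowed when $f_{np}<0$. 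It does no harm, since pointwise bounds still follow from the quadratic coercivity of the pointwise problem, and an $L^2$ bound on $\bar\bc$ already suffices for the Lax--Milgram step because $H^\ell(\Omega)\hookrightarrow W^{1,\infty}(\Omega)$.
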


\begin{proof}
We use Leray--Schauder's fixed point theorem; see, for instance,
\cite{zeidler2013nonlinear}. The proof is written in the unconstrained
notation. In the pure Neumann case, the same argument is applied to the
zero-mean parts of the entropy variables, with the constants determined by
the mass constraints.

Let
\(
    \by=(y_n,y_p)^\top\in L^\infty(\Omega)^2,
    \;
    \theta\in[0,1].
\)
For fixed \(\by\), we first determine the electrostatic potential and the
corresponding concentrations. By
Lemma~\ref{lem:global-entropy-invertibility}, for each value of \(\phi\) there
is a unique
\(
    \bc=\bc(\by,\phi)\in(0,\infty)^2
\)
solving the entropy-variable relation
\[
    \by
    =
    \ln\bc+\mathbf 1+\bz\phi+F\bc,
    \qquad
    \bz:=(-1,1)^\top .
\]
We then solve the semilinear Poisson equation
\begin{equation}
    -\Delta\phi
    =
    c_p(\by,\phi)-c_n(\by,\phi)
\label{eq:global-semilinear-poisson}
\end{equation}
with the boundary conditions in \textup{(A3)}.

The solvability of \eqref{eq:global-semilinear-poisson} follows from
monotonicity. Differentiating the entropy-variable relation with respect to
\(\phi\), we obtain
\[
    \partial_\phi\bc
    =
    -
    \left(
    \operatorname{diag}(1/c_n,1/c_p)+F
    \right)^{-1}\bz .
\]
Therefore
\[
\begin{aligned}
    \partial_\phi(c_p-c_n)
    =
    \bz^\top\partial_\phi\bc
   =
    -
    \bz^\top
    \left(
    \operatorname{diag}(1/c_n,1/c_p)+F
    \right)^{-1}
    \bz
    \le 0 .
\end{aligned}
\]
Thus the operator
\[
    \phi\mapsto
    -\Delta\phi-\bigl(c_p(\by,\phi)-c_n(\by,\phi)\bigr)
\]
is monotone. The standard monotone-operator argument gives a solution
\(\phi=\phi(\by)\) of \eqref{eq:global-semilinear-poisson}; see, for example,
\cite{brezis2011functional}. We set
\(
    \bc(\by):=\bc(\by,\phi(\by)).
\)

In the pure Neumann case, the same construction is performed together with
the mass constraints. More precisely, the additive constants in the entropy
variables are chosen so that
\[
    \int_\Omega c_i(\by,\phi)\,dx
    =
    \int_\Omega c_i^{k-1}\,dx,
    \qquad i=n,p .
\]
This gives the zero-mean compatibility condition for the right-hand side of
\eqref{eq:global-semilinear-poisson}, and the potential is normalized by zero
mean. The strict monotonicity of the entropy-variable map with respect to the
additive constants gives the required mass adjustment.

Next, for fixed \((\by,\theta)\), define
\(
    \bv=S(\by,\theta)\in H^\ell(\Omega)^2
\)
as the solution of the linear regularized problem
\begin{equation}
\theta
\int_\Omega
M(\bc(\by))\nabla\bv:\nabla\boldsymbol\eta\,dx
+
\varepsilon b(\bv,\boldsymbol\eta)
=
-\frac{\theta}{\tau}
\int_\Omega
\bigl(\bc(\by)-\bc^{k-1}\bigr)\cdot\boldsymbol\eta\,dx
\label{eq:global-fixed-point-map}
\end{equation}
for all
\(
    \boldsymbol\eta\in H^\ell(\Omega)^2 .
\)
In the pure Neumann case, \eqref{eq:global-fixed-point-map} is imposed on the
zero-mean test space. Since \(M(\bc(\by))\) is positive semidefinite and
\(\varepsilon b\) is coercive on \(H^\ell(\Omega)^2\), the bilinear form on
the left-hand side is coercive. Hence, the Lax--Milgram theorem gives a unique
solution \(\bv=S(\by,\theta)\).

We now verify that \(S\) is compact and continuous as a map into
\(L^\infty(\Omega)^2\). Let \(\by\) remain in a bounded subset of
\(L^\infty(\Omega)^2\). The construction above gives a corresponding bound
for \(\bc(\by)\) in \(L^2(\Omega)^2\), depending on the \(L^\infty\)-bound for
\(\by\), the previous state \(\bc^{k-1}\), and the fixed data of the problem.
Testing \eqref{eq:global-fixed-point-map} with
\(\boldsymbol\eta=\bv\), we obtain
\[
    \theta
    \int_\Omega
    M(\bc(\by))\nabla\bv:\nabla\bv\,dx
    +
    \varepsilon b(\bv,\bv)
    =
    -\frac{\theta}{\tau}
    \int_\Omega
    \bigl(\bc(\by)-\bc^{k-1}\bigr)\cdot\bv\,dx .
\]
Dropping the nonnegative mobility term and using Cauchy's inequality gives
\[
    \varepsilon b(\bv,\bv)
    \le
    \frac{1}{\tau}
    \|\bc(\by)-\bc^{k-1}\|_{L^2(\Omega)}
    \|\bv\|_{L^2(\Omega)} ,
\]
where $b$ is given in (\ref{bilinearform_b}).
Since \(b(\bv,\bv)\) controls
\(
    \|\bv\|_{H^\ell(\Omega)^2}^2,
\)
it follows that
\[
    \|\bv\|_{H^\ell(\Omega)^2}
    \le
    C(\tau,\varepsilon,\bc^{k-1},\|\by\|_{L^\infty}).
\]
The compact embedding
\(
    H^\ell(\Omega)\hookrightarrow L^\infty(\Omega)
\)
shows that \(S\) maps bounded subsets of \(L^\infty(\Omega)^2\) into
relatively compact subsets of \(L^\infty(\Omega)^2\).

The continuity of \(S\) follows from the continuity of the entropy-variable
map, the continuous dependence in the monotone Poisson problem
\eqref{eq:global-semilinear-poisson}, and the continuous dependence of the
solution of \eqref{eq:global-fixed-point-map} on its coefficients and
right-hand side. Thus, \(S\) is a compact continuous fixed point map.

It remains to obtain an a priori bound for all fixed points of \(S\). Let
\(
    \bmu=S(\bmu,\theta),
    \;
    \bc=\bc(\bmu,\phi).
\)
Then \((\bmu,\bc,\phi)\) satisfies
\[
\frac{\theta}{\tau}
\int_\Omega
(\bc-\bc^{k-1})\cdot\boldsymbol\eta\,dx
+
\theta
\int_\Omega
M(\bc)\nabla\bmu:\nabla\boldsymbol\eta\,dx
+
\varepsilon b(\bmu,\boldsymbol\eta)
=
0
\]
for all \(\boldsymbol\eta\in H^\ell(\Omega)^2\). In the pure Neumann case,
we test with the zero-mean part of \(\bmu\); the constant part drops out of
the first term because of the mass constraints
\eqref{eq:global-discrete-mass-conservation}, and it has zero gradient in the
second term.

Taking \(\boldsymbol\eta=\bmu\), we get
\[
\frac{\theta}{\tau}
\int_\Omega
(\bc-\bc^{k-1})\cdot\bmu\,dx
+
\theta
\int_\Omega
\nabla\bmu^\top M(\bc)\nabla\bmu\,dx
+
\varepsilon b(\bmu,\bmu)
=
0 .
\]
By the definition of \(\mathcal D\),
\[
    \int_\Omega
    \nabla\bmu^\top M(\bc)\nabla\bmu\,dx
    =
    \mathcal D(\bc).
\]
Using Lemma~\ref{lem:global-convexity},
\[
    E(\bc)-E(\bc^{k-1})
    \le
    \int_\Omega
    \bmu\cdot(\bc-\bc^{k-1})\,dx .
\]
Therefore,
\[
    \frac{\theta}{\tau}
    \bigl(E(\bc)-E(\bc^{k-1})\bigr)
    +
    \theta\mathcal D(\bc)
    +
    \varepsilon b(\bmu,\bmu)
    \le
    0 .
\]
Since \(\mathcal D(\bc)\ge0\) and \(E\) is bounded from below on admissible
states,
\[
    \varepsilon b(\bmu,\bmu)
    \le
    \frac1\tau
    \bigl(E(\bc^{k-1})-\inf E\bigr).
\]
Thus, all fixed points are bounded in \(H^\ell(\Omega)^2\), uniformly for
\(\theta\in[0,1]\). Leray--Schauder's theorem yields a fixed point for
\(\theta=1\), and this fixed point solves
\eqref{eq:global-regularized-discrete}.
\end{proof}

\begin{lem}[Discrete entropy inequality]
\label{lem:global-discrete-entropy}
Every solution of the regularized problem
\eqref{eq:global-regularized-discrete}, with the mass-constrained
interpretation above in the pure Neumann case, satisfies
\[
    E(\bc^k)
    +
    \tau\mathcal D(\bc^k)
    +
    \varepsilon\tau b(\bmu^k,\bmu^k)
    \le
    E(\bc^{k-1}).
\]
\end{lem}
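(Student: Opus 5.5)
We test the discrete equation \eqref{eq:global-regularized-discrete} with the entropy variable itself, \(\eta_i=\mu_i^k\), and sum over \(i=n,p\). This test function is admissible because \(\bmu^k\in H^\ell(\Omega)^2\). We then multiply by \(\tau\), which gives
\[
\int_\Omega(\bc^k-\bc^{k-1})\cdot\bmu^k\,dx
+\tau\int_\Omega\nabla(\bmu^k)^\top M(\bc^k)\nabla\bmu^k\,dx
+\varepsilon\tau\, b(\bmu^k,\bmu^k)=0 .
\]
The middle integral is exactly \(\tau\mathcal D(\bc^k)\) by the definition \eqref{eq:intro-dissipation-decomposition}. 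Since \(\bc^k\) is strictly positive and \(\bmu^k\in H^\ell\hookrightarrow W^{1,\infty}\), all terms are finite.

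Next we bound the first term from below using Lemma~\ref{lem:global-convexity} with \(\bc=\bc^k\) and \(\widetilde\bc=\bc^{k-1}\), which yields \(E(\bc^k)-E(\bc^{k-1})\le\int_\Omega\bmu(\bc^k)\cdot(\bc^k-\bc^{k-1})\,dx\). Here we must check that \(\bmu^k\), the solution variable, coincides with the electrochemical potential \(\bmu(\bc^k)\) of Lemma~\ref{lem:global-convexity}. This holds because \(\bc^k=\bc(\bmu^k,\phi^k)\) with \(\phi^k=\mathcal P(c_p^k-c_n^k)\), so Lemma~\ref{lem:global-entropy-invertibility} gives pointwise \(\mu_n^k=\ln c_n^k+1-\phi^k+f_{nn}c_n^k+f_{np}c_p^k\), and similarly for \(\mu_p^k\). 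The regularity needed in the convexity lemma also holds: \(\bc^k\) is bounded above and away from zero, since \(\bmu^k,\phi^k\in L^\infty\) and the inverse map is continuous on compact sets. Substituting this bound into the identity gives the claimed inequality.

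The main care point is the pure Neumann case. There \eqref{eq:global-regularized-discrete} holds only for zero-mean test functions, so we test with \(\widetilde\mu_i^k=\mu_i^k-\zeta_i^k\) instead. The gradient and the \(b\)-term then need to be handled as follows.
\begin{itemize}
\item The gradient is unchanged, since \(\nabla\widetilde\mu_i^k=\nabla\mu_i^k\).
\item In the time-difference term, the constants \(\zeta_i^k\) contribute \(\zeta_i^k\int_\Omega(c_i^k-c_i^{k-1})\,dx=0\) by the mass constraint \eqref{eq:global-discrete-mass-conservation}, so the full \(\bmu^k\) may be reinstated there before applying convexity.
\item For the regularization, the computation actually yields \(\varepsilon\tau b(\widetilde\bmu^k,\widetilde\bmu^k)\). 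This is consistent with the stated inequality if the regularization \(b\) is understood to act on the zero-mean parts, which is the construction used in Lemma~\ref{lem:global-regularized-solvability}. Otherwise we simply keep the nonnegative term \(\varepsilon\tau b(\widetilde\bmu^k,\widetilde\bmu^k)\), which is what is used later.
\end{itemize}
The admissibility of \(\bc^{k-1}\) as a finite-energy comparison state also follows inductively from the inequality itself.
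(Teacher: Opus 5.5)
Your proposal is correct and follows essentially the same route as the paper. You test \eqref{eq:global-regularized-discrete} with $\eta_i=\mu_i^k$ (with its zero-mean part in the pure Neumann case, where the constants $\zeta_i^k$ drop out of the time-difference term by \eqref{eq:global-discrete-mass-conservation} and have zero gradient in the mobility term), recognize the mobility term as $\mathcal D(\bc^k)$, and close with the convexity inequality of Lemma~\ref{lem:global-convexity}; your extra checks (that $\bmu^k=\bmu(\bc^k)$ via Lemma~\ref{lem:global-entropy-invertibility}, and that in the Neumann case the regularization term is $b$ evaluated on the zero-mean part) match the paper's conventions and make explicit what the paper leaves implicit.
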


\begin{proof}
Choose
\(
    \eta_i=\mu_i^k,
    \; i=n,p,
\)
in \eqref{eq:global-regularized-discrete}, and sum over the two species. In
the pure Neumann constrained formulation, we test with the zero-mean parts of
\(\mu_i^k\). The constant parts do not contribute to the time-discrete term
because of \eqref{eq:global-discrete-mass-conservation}, and they have zero
gradient in the mobility term. Hence
\[
\frac1\tau
\int_\Omega
(\bc^k-\bc^{k-1})\cdot\bmu^k\,dx
+
\int_\Omega
\nabla\bmu^k{}^\top M(\bc^k)\nabla\bmu^k\,dx
+
\varepsilon b(\bmu^k,\bmu^k)
=
0 .
\]
The middle term is precisely \(\mathcal D(\bc^k)\). Therefore
\[
\frac1\tau
\int_\Omega
(\bc^k-\bc^{k-1})\cdot\bmu^k\,dx
+
\mathcal D(\bc^k)
+
\varepsilon b(\bmu^k,\bmu^k)
=
0 .
\]
By Lemma~\ref{lem:global-convexity},
\[
    E(\bc^k)-E(\bc^{k-1})
    \le
    \int_\Omega
    \bmu^k\cdot(\bc^k-\bc^{k-1})\,dx .
\]
Combining the last two relations gives
\[
    \frac1\tau
    \bigl(E(\bc^k)-E(\bc^{k-1})\bigr)
    +
    \mathcal D(\bc^k)
    +
    \varepsilon b(\bmu^k,\bmu^k)
    \le
    0 .
\]
Multiplying by \(\tau\) yields the desired inequality.
\end{proof}


\subsection{Uniform bounds and compactness}
\label{subsec:uniform-bounds-compactness}

We now pass from the discrete entropy inequality to estimates that are uniform
in the approximation parameters. These estimates are written in terms of the dissipation fields introduced in
Definition~\ref{def:finite-energy-weak-solution}, but now at the approximate
level.

Let \(N\tau=T\). For \(t\in((k-1)\tau,k\tau]\), define the piecewise constant
interpolants
\[
    \bc_{\tau,\varepsilon}(t)=\bc^k,
    \qquad
    \bmu_{\tau,\varepsilon}(t)=\bmu^k,
    \qquad
    \phi_{\tau,\varepsilon}(t)=\phi^k .
\]
We also define the left time-shift and the discrete time derivative by
\[
    \sigma_\tau\bc_{\tau,\varepsilon}(t)=\bc^{k-1},
    \qquad
    \partial_t^\tau\bc_{\tau,\varepsilon}(t)
    =
    \frac{\bc^k-\bc^{k-1}}{\tau},
    \qquad
    t\in((k-1)\tau,k\tau].
\]

For the approximate solutions, set
\(
    \chi_{n,\tau,\varepsilon}
    :=
    \sqrt{
        \frac{\delta a c_{n,\tau,\varepsilon}}
        {\omega(\bc_{\tau,\varepsilon})}
    }
\)
and
\(
    \chi_{p,\tau,\varepsilon}
    :=
    \sqrt{
        \frac{\delta b c_{p,\tau,\varepsilon}}
        {\omega(\bc_{\tau,\varepsilon})}
    }.
\)
We define
\(
    \boldsymbol A_{n,\tau,\varepsilon}
    :=
    \chi_{n,\tau,\varepsilon}
    \nabla\mu_{n,\tau,\varepsilon}
\)
and
\(
    \boldsymbol A_{p,\tau,\varepsilon}
    :=
    \chi_{p,\tau,\varepsilon}
    \nabla\mu_{p,\tau,\varepsilon}.
\)
At each discrete time step,
\(\bmu^k\in H^\ell(\Omega)^2\hookrightarrow W^{1,\infty}(\Omega)^2\),
and the entropy-variable reconstruction gives continuous strictly positive
concentrations on \(\overline\Omega\). Hence each \(c_i^k\) has a positive
lower bound at the fixed approximation level, and the Sobolev chain rule is
applicable. It follows that, a.e. in \(\Omega_T\),
\begin{equation}
\begin{aligned}
    \boldsymbol A_{n,\tau,\varepsilon}
    &=
    2\nabla\chi_{n,\tau,\varepsilon}
    +
    \chi_{n,\tau,\varepsilon}
    \nabla\Bigl(
        \log\omega(\bc_{\tau,\varepsilon})
        -\phi_{\tau,\varepsilon}
        +f_{nn}c_{n,\tau,\varepsilon}
        +f_{np}c_{p,\tau,\varepsilon}
    \Bigr),
    \\
    \boldsymbol A_{p,\tau,\varepsilon}
    &=
    2\nabla\chi_{p,\tau,\varepsilon}
    +
    \chi_{p,\tau,\varepsilon}
    \nabla\Bigl(
        \log\omega(\bc_{\tau,\varepsilon})
        +\phi_{\tau,\varepsilon}
        +f_{np}c_{n,\tau,\varepsilon}
        +f_{pp}c_{p,\tau,\varepsilon}
    \Bigr).
\end{aligned}
\label{eq:approximate-square-root-identities}
\end{equation}
We further define
\(
    \boldsymbol B_{\tau,\varepsilon}
    :=
    \sqrt{
        \frac{ab}{\omega(\bc_{\tau,\varepsilon})}
    }
    \bigl(
        c_{n,\tau,\varepsilon}\nabla\mu_{n,\tau,\varepsilon}
        +
        c_{p,\tau,\varepsilon}\nabla\mu_{p,\tau,\varepsilon}
    \bigr).
\)
By the definitions above, the pointwise compatibility identity
\begin{equation}
    \boldsymbol B_{\tau,\varepsilon}
    =
    \sqrt{
        \frac{b c_{n,\tau,\varepsilon}}{\delta}
    }\,
    \boldsymbol A_{n,\tau,\varepsilon}
    +
    \sqrt{
        \frac{a c_{p,\tau,\varepsilon}}{\delta}
    }\,
    \boldsymbol A_{p,\tau,\varepsilon}
\label{eq:approximate-compatibility}
\end{equation}
holds a.e. in \(\Omega_T\).


Consistently with the physical flux convention
\(\boldsymbol J=-M(\bc)\nabla\bmu\), define
\[
\boldsymbol J_{n,\tau,\varepsilon}
=
-
\sqrt{\frac{\delta a c_{n,\tau,\varepsilon}}
{\omega(\bc_{\tau,\varepsilon})}}\,
\boldsymbol A_{n,\tau,\varepsilon}
-
c_{n,\tau,\varepsilon}
\sqrt{\frac{ab}{\omega(\bc_{\tau,\varepsilon})}}\,
\boldsymbol B_{\tau,\varepsilon},
\]
and
\[
\boldsymbol J_{p,\tau,\varepsilon}
=
-
\sqrt{\frac{\delta b c_{p,\tau,\varepsilon}}
{\omega(\bc_{\tau,\varepsilon})}}\,
\boldsymbol A_{p,\tau,\varepsilon}
-
c_{p,\tau,\varepsilon}
\sqrt{\frac{ab}{\omega(\bc_{\tau,\varepsilon})}}\,
\boldsymbol B_{\tau,\varepsilon}.
\]
With these definitions, the approximate continuity equations take the form
\[
    \partial_t^\tau c_{i,\tau,\varepsilon}
    +
    \nabla\cdot\boldsymbol J_{i,\tau,\varepsilon}
    +
    \varepsilon\,\mathcal R_{i,\tau,\varepsilon}
    =
    0
    \quad\text{in }(H^\ell(\Omega))',
\]
where
\(
    \langle \mathcal R_{i,\tau,\varepsilon},\eta_i\rangle
    :=
    b(\mu_{i,\tau,\varepsilon},\eta_i),
    \;
    \eta_i\in H^\ell(\Omega).
\)

\begin{lem}[Uniform bounds]
\label{lem:global-uniform-bounds}
Assume that the approximate initial data are smooth and strictly positive, and
that
\[
    E(\bc^0)\le C .
\]
Let
\(
    (\bc_{\tau,\varepsilon},\bmu_{\tau,\varepsilon},
    \phi_{\tau,\varepsilon})
\)
be the approximate solutions constructed in
Subsection~\ref{subsec:entropy-variable-approximation}. Then there exists a
constant \(C>0\), independent of \(\tau\) and \(\varepsilon\), such that

\[
    \bc_{\tau,\varepsilon}
    \quad\text{is bounded in }
    L^\infty(0,T;L^2(\Omega)^2),
\]
\[
    \phi_{\tau,\varepsilon}
    \quad\text{is bounded in }
    L^\infty(0,T;H^1(\Omega)),
\]
\[
    \boldsymbol A_{n,\tau,\varepsilon},
    \boldsymbol A_{p,\tau,\varepsilon},
    \boldsymbol B_{\tau,\varepsilon}
    \quad\text{are bounded in }
    L^2(\Omega_T;\mathbb R^d),
\]
\[
    \int_0^T\int_\Omega
    \frac{
    |\nabla c_{n,\tau,\varepsilon}|^2
    +
    |\nabla c_{p,\tau,\varepsilon}|^2
    }
    {1+c_{n,\tau,\varepsilon}+c_{p,\tau,\varepsilon}}
    \,dxdt
    \le C,
\]
\[
    \bc_{\tau,\varepsilon}
    \quad\text{is bounded in }
    L^{4/3}(0,T;W^{1,4/3}(\Omega)^2),
\]
\[\chi_{n,\tau,\varepsilon}, \chi_{p,\tau,\varepsilon}  \quad\text{are bounded in }
L^{4/3}(0,T;W^{1,4/3}(\Omega)),\]
and
\[
    \boldsymbol J_{n,\tau,\varepsilon},
    \boldsymbol J_{p,\tau,\varepsilon}
    \quad\text{are bounded in }
    L^{4/3}(\Omega_T;\mathbb R^d).
\]
Moreover,
\[
    \partial_t^\tau c_{n,\tau,\varepsilon},
    \partial_t^\tau c_{p,\tau,\varepsilon}
    \quad\text{are bounded in }
    L^{4/3}(0,T;H^\ell(\Omega)'),
\]
and the regularization terms satisfy
\[
    \varepsilon\mathcal R_{n,\tau,\varepsilon},
    \varepsilon\mathcal R_{p,\tau,\varepsilon}
    \to0
    \quad\text{strongly in }
    L^2(0,T;H^\ell(\Omega)')
\]
as \(\varepsilon\to0\).
\end{lem}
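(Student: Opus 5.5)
We start by summing the discrete entropy inequality of Lemma~\ref{lem:global-discrete-entropy} over \(k=1,\dots,j\). This gives, for every \(j\le N\),
\[
    E(\bc^j)+\tau\sum_{k=1}^j\mathcal D(\bc^k)+\varepsilon\tau\sum_{k=1}^j b(\bmu^k,\bmu^k)\le E(\bc^0)\le C .
\]
Lemma~\ref{lem:energy-coercivity} then yields the \(L^\infty(0,T;L^2(\Omega)^2)\)-bound for \(\bc_{\tau,\varepsilon}\). The bound \(\tfrac12\|\nabla\phi\|_{L^2}^2\le E+C\) together with (A3) gives the \(L^\infty(0,T;H^1)\)-bound for \(\phi_{\tau,\varepsilon}\). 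Since the approximate states are positive, Remark~\ref{rem:weak-dissipation-fields} applies pointwise and gives
\[
    \mathcal D(\bc^k)=\int_\Omega\bigl(|\boldsymbol A_n|^2+|\boldsymbol A_p|^2+|\boldsymbol B|^2\bigr)dx
\]
at each step, using the decomposition \eqref{eq:intro-dissipation-decomposition}. Hence \(\boldsymbol A_{n},\boldsymbol A_p,\boldsymbol B\) are bounded in \(L^2(\Omega_T)\), and \(\varepsilon\int_0^T b(\bmu,\bmu)\,dt\le C\).

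Next, Lemma~\ref{lem:global-entropy-production} applies to each positive \(\bc^k\), which is continuous and bounded below. Integrating in time and using \(E(\bc^k)\le C\) gives
\[
    \kappa_{\rm w}\int_0^T\!\!\int_\Omega\frac{|\nabla \bc|^2}{1+c_n+c_p}\le \int_0^T\mathcal D\,dt+CT(1+C)^{3/2}\le C .
\]
Lemma~\ref{lem:global-weighted-gradient}, applied with constants tracked quantitatively, then gives the \(L^{4/3}(0,T;W^{1,4/3})\)-bound for \(\bc_{\tau,\varepsilon}\). For \(\chi_n\) we use the chain rule with \(\chi_n=g(\bc)\), where \(g(\bc)=\sqrt{\delta a c_n/\omega(\bc)}\). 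A direct computation gives \(|\nabla g|\lesssim |\nabla\bc|/\sqrt{c_n\,\omega}\), which is singular as \(c_n\to0\), so this estimate is the delicate one. We avoid it by using the identity \eqref{eq:approximate-square-root-identities}:
\[
    2\nabla\chi_n=\boldsymbol A_n-\chi_n\nabla\bigl(\log\omega-\phi+f_{nn}c_n+f_{np}c_p\bigr).
\]
Here \(\chi_n\le C\) with \(\chi_n^2\le a c_n\), and \(|\nabla\log\omega|\le C|\nabla\bc|/(1+c_n+c_p)\le C|\nabla\bc|/\sqrt{1+c_n+c_p}\), which is bounded in \(L^2(\Omega_T)\). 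The term \(\chi_n\nabla\phi\) is bounded in \(L^\infty(0,T;L^2)\), and \(\chi_n\nabla c_i\) is bounded in \(L^{4/3}\) by the previous step. Together with \(\chi_n\in L^\infty\), this gives \(\chi_n\in L^{4/3}(0,T;W^{1,4/3})\). The flux bounds follow directly from Lemma~\ref{lem:global-flux-estimate}.

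For the time derivative, we test the discrete equation with \(\eta\in H^\ell(\Omega)\hookrightarrow W^{1,\infty}\). This gives
\[
    |\langle\partial_t^\tau c_i,\eta\rangle|\le \|\boldsymbol J_i\|_{L^{4/3}}\|\nabla\eta\|_{L^4}+\varepsilon\, b(\mu_i,\mu_i)^{1/2}\|\eta\|_{H^\ell}.
\]
The first part lies in \(L^{4/3}(0,T)\). For the second, \(\|\varepsilon\mathcal R_i\|_{L^2(0,T;(H^\ell)')}^2\le\varepsilon^2\int_0^T b(\mu_i,\mu_i)\,dt\le C\varepsilon\). This tends to zero and is in particular bounded, which yields both remaining claims. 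In the Neumann case all tests use zero-mean parts, and the constants drop out by mass conservation as before.

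We expect the main obstacle to be the \(\chi_i\) gradient bound, because \(\nabla\chi_i\) is not controlled by the weighted gradient alone near vacuum. The resolution is to use the square-root identity rather than the chain rule, so that the singular part is absorbed into \(\boldsymbol A_i\).
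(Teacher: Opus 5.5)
Your proposal is correct and follows essentially the same route as the paper. It sums the discrete entropy inequality, then uses coercivity for the $L^\infty(0,T;L^2)$ bounds and the dissipation decomposition for $\boldsymbol A_n,\boldsymbol A_p,\boldsymbol B$. It combines Lemma~\ref{lem:global-entropy-production} with Lemma~\ref{lem:global-weighted-gradient} for the Sobolev bound, uses the square-root identity \eqref{eq:approximate-square-root-identities} rather than the chain rule for $\nabla\chi_i$, and closes with the $\varepsilon^{1/2}$ Cauchy--Schwarz bound on the regularization term. The only minor difference is that you bound $\chi_n\nabla\log\omega$ in $L^2(\Omega_T)$ using $\omega\ge c(1+c_n+c_p)$ and the weighted estimate, whereas the paper uses the cruder pointwise bound $|\nabla\log\omega|\le(b|\nabla c_n|+a|\nabla c_p|)/\delta$ in $L^{4/3}$; either suffices.
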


\begin{proof}
Summing the discrete entropy inequality from
Lemma~\ref{lem:global-discrete-entropy} over \(k=1,\ldots,N\), we obtain
\[
    \sup_{1\le k\le N}E(\bc^k)
    +
    \tau\sum_{k=1}^N \mathcal D(\bc^k)
    +
    \varepsilon\tau
    \sum_{k=1}^N b(\bmu^k,\bmu^k)
    \le
    E(\bc^0).
\]
Equivalently,
\begin{equation}
    \sup_{0<t\le T}E(\bc_{\tau,\varepsilon}(t))
    +
    \int_0^T
    \mathcal D(\bc_{\tau,\varepsilon}(t))\,dt
    +
    \varepsilon
    \int_0^T
    b(\bmu_{\tau,\varepsilon},\bmu_{\tau,\varepsilon})\,dt
    \le C .
\label{eq:uniform-discrete-entropy-bound}
\end{equation}
In the pure Neumann constrained formulation, \(b\) is applied to the
zero-mean part of \(\bmu_{\tau,\varepsilon}\); we keep the same notation.

By Lemma~\ref{lem:energy-coercivity},
\[
    \|c_{n,\tau,\varepsilon}\|_{L^\infty(0,T;L^2(\Omega))}
    +
    \|c_{p,\tau,\varepsilon}\|_{L^\infty(0,T;L^2(\Omega))}
    \le C .
\]
The estimate for \(\phi_{\tau,\varepsilon}\) follows from the Poisson equation,
assumption \textup{(A3)}, and the \(L^\infty(0,T;L^2(\Omega))\) bound for
\(c_{p,\tau,\varepsilon}-c_{n,\tau,\varepsilon}\).

Next, by the definitions of
\(\boldsymbol A_{n,\tau,\varepsilon}\),
\(\boldsymbol A_{p,\tau,\varepsilon}\), and
\(\boldsymbol B_{\tau,\varepsilon}\), and by the dissipation decomposition
\eqref{eq:intro-dissipation-decomposition},
\[
    \mathcal D(\bc_{\tau,\varepsilon})
    =
    \int_\Omega
    \bigl(
    |\boldsymbol A_{n,\tau,\varepsilon}|^2
    +
    |\boldsymbol A_{p,\tau,\varepsilon}|^2
    +
    |\boldsymbol B_{\tau,\varepsilon}|^2
    \bigr)\,dx .
\]
Together with \eqref{eq:uniform-discrete-entropy-bound}, this gives
\[
    \boldsymbol A_{n,\tau,\varepsilon},
    \boldsymbol A_{p,\tau,\varepsilon},
    \boldsymbol B_{\tau,\varepsilon}
    \quad\text{bounded in }L^2(\Omega_T;\mathbb R^d).
\]

We now obtain the weighted gradient estimate. Define
\(
    \mathcal I_{\rm w}(\bc)
    :=
    \int_\Omega
    \frac{
    |\nabla c_n|^2+|\nabla c_p|^2
    }
    {1+c_n+c_p}
    \,dx .
\)
By Lemma~\ref{lem:global-entropy-production},
\[
    \mathcal D(\bc^k)
    \ge
    \kappa_{\rm w} \mathcal I_{\rm w}(\bc^k)
    -
    C\bigl(1+E(\bc^k)\bigr)^{3/2}.
\]
Since \(E(\bc^k)\) is uniformly bounded from
\eqref{eq:uniform-discrete-entropy-bound}, we have
\[
   \tau\sum_{k=1}^N \mathcal I_{\rm w}(\bc^k)
    \le
    \tau\sum_{k=1}^N \mathcal D(\bc^k)
    +
    C\tau\sum_{k=1}^N 1
    \le C .
\]
Hence
\[
    \int_0^T\int_\Omega
    \frac{
    |\nabla c_{n,\tau,\varepsilon}|^2
    +
    |\nabla c_{p,\tau,\varepsilon}|^2
    }
    {1+c_{n,\tau,\varepsilon}+c_{p,\tau,\varepsilon}}
    \,dxdt
    \le C .
\]
Combining this estimate with the
\(L^\infty(0,T;L^2(\Omega))\)-bound and
Lemma~\ref{lem:global-weighted-gradient}, we get
\[
    \bc_{\tau,\varepsilon}
    \quad\text{bounded in }
    L^{4/3}(0,T;W^{1,4/3}(\Omega)^2).
\]

We next obtain the corresponding bounds for the square-root weights. By
\eqref{eq:approximate-square-root-identities},
\begin{align*}
    2\nabla\chi_{n,\tau,\varepsilon}
    &=
    \boldsymbol A_{n,\tau,\varepsilon}
    -
    \chi_{n,\tau,\varepsilon}
    \nabla\Bigl(
        \log\omega(\bc_{\tau,\varepsilon})
        -\phi_{\tau,\varepsilon}
        +f_{nn}c_{n,\tau,\varepsilon}
        +f_{np}c_{p,\tau,\varepsilon}
    \Bigr),                                                \\
    2\nabla\chi_{p,\tau,\varepsilon}
    &=
    \boldsymbol A_{p,\tau,\varepsilon}
    -
    \chi_{p,\tau,\varepsilon}
    \nabla\Bigl(
        \log\omega(\bc_{\tau,\varepsilon})
        +\phi_{\tau,\varepsilon}
        +f_{np}c_{n,\tau,\varepsilon}
        +f_{pp}c_{p,\tau,\varepsilon}
    \Bigr).
\end{align*}
Since
\(
    0\le\chi_{n,\tau,\varepsilon}\le\sqrt{\delta a/b}
\)
and
\(
    0\le\chi_{p,\tau,\varepsilon}\le\sqrt{\delta b/a},
\)
while
\(
    \bigl|\nabla\log\omega(\bc_{\tau,\varepsilon})\bigr|
    \le
    \frac{
        b|\nabla c_{n,\tau,\varepsilon}|
        +
        a|\nabla c_{p,\tau,\varepsilon}|
    }{\delta},
\)
the bounds established above imply
\begin{align*}
    &\|\nabla\chi_{n,\tau,\varepsilon}\|_{L^{4/3}(\Omega_T)}
    +
    \|\nabla\chi_{p,\tau,\varepsilon}\|_{L^{4/3}(\Omega_T)}
    \\
    &\;\,\le
    C\Bigl(
        \|\boldsymbol A_{n,\tau,\varepsilon}\|_{L^2(\Omega_T)}
        +
        \|\boldsymbol A_{p,\tau,\varepsilon}\|_{L^2(\Omega_T)}
        +
        \|\nabla c_{n,\tau,\varepsilon}\|_{L^{4/3}(\Omega_T)}
        +
        \|\nabla c_{p,\tau,\varepsilon}\|_{L^{4/3}(\Omega_T)}
        +
        \|\nabla\phi_{\tau,\varepsilon}\|_{L^2(\Omega_T)}
    \Bigr).
\end{align*}
Here we used the embedding
\(L^2(\Omega_T)\hookrightarrow L^{4/3}(\Omega_T)\).
Together with the uniform \(L^\infty(\Omega_T)\)-bounds for
\(\chi_{n,\tau,\varepsilon}\) and \(\chi_{p,\tau,\varepsilon}\), this proves
that both weights are bounded in
\(L^{4/3}(0,T;W^{1,4/3}(\Omega))\).

The flux estimate follows from Lemma~\ref{lem:global-flux-estimate}, applied
to the approximate flux representation above. Thus
\[
    \boldsymbol J_{n,\tau,\varepsilon},
    \boldsymbol J_{p,\tau,\varepsilon}
    \quad\text{are bounded in }
    L^{4/3}(\Omega_T;\mathbb R^d).
\]

It remains to estimate the discrete time derivative. For
\(t\in((k-1)\tau,k\tau]\), the discrete equation gives, for every
\(\eta_i\in H^\ell(\Omega)\),
\[
    \int_\Omega
    \partial_t^\tau c_{i,\tau,\varepsilon}\eta_i\,dx
    -
    \int_\Omega
    \boldsymbol J_{i,\tau,\varepsilon}\cdot\nabla\eta_i\,dx
    +
    \varepsilon b(\mu_{i,\tau,\varepsilon},\eta_i)
    =
    0 .
\]
Since \(H^\ell(\Omega)\hookrightarrow W^{1,4}(\Omega)\),
\[
\left|
    \int_\Omega
    \boldsymbol J_{i,\tau,\varepsilon}\cdot\nabla\eta_i\,dx
\right|
\le
C
\|\boldsymbol J_{i,\tau,\varepsilon}\|_{L^{4/3}(\Omega)}
\|\eta_i\|_{H^\ell(\Omega)} .
\]
Thus, the flux term is bounded in
\(L^{4/3}(0,T;(H^\ell(\Omega))')\).

For the regularization term, Cauchy's inequality with respect to \(b\) gives
\[
    |\varepsilon b(\mu_{i,\tau,\varepsilon},\eta_i)|
    \le
    \varepsilon^{1/2}
    \bigl(
    \varepsilon b(\mu_{i,\tau,\varepsilon},
    \mu_{i,\tau,\varepsilon})
    \bigr)^{1/2}
    \|\eta_i\|_{H^\ell(\Omega)} .
\]
Therefore, by \eqref{eq:uniform-discrete-entropy-bound},
\[
    \|\varepsilon\mathcal R_{i,\tau,\varepsilon}\|_
    {L^2(0,T;H^\ell(\Omega)')}
    \le
    C\varepsilon^{1/2}.
\]
In particular,
\[
    \varepsilon\mathcal R_{i,\tau,\varepsilon}
    \to0
    \quad\text{strongly in }
    L^2(0,T;H^\ell(\Omega)')
\]
as \(\varepsilon\to0\), and the same term is uniformly bounded in
\(L^{4/3}(0,T;H^\ell(\Omega)')\). Combining the flux and regularization
estimates gives
\[
    \partial_t^\tau c_{i,\tau,\varepsilon}
    \quad\text{bounded in }
    L^{4/3}(0,T;H^\ell(\Omega)'),
    \qquad i=n,p .
\]
This completes the proof.
\end{proof}

We next extract compactness. We use the following discrete Aubin--Lions--Simon
compactness criterion for piecewise constant functions in time.

\begin{lem}[Discrete Aubin--Lions compactness]
\label{lem:discrete-aubin-lions}
Let
\(    X\hookrightarrow \mathcal B\hookrightarrow Y,
\)
where \(X\hookrightarrow\mathcal B\) is compact and
\(\mathcal B\hookrightarrow Y\) is continuous. Let \(v_\tau\) be piecewise
constant in time and set
\(
    \partial_t^\tau v_\tau
    =
    \frac{v_\tau-\sigma_\tau v_\tau}{\tau}.
\)
If $v_\tau$ is bounded in $L^p(0,T;X)$,
and $\partial_t^\tau v_\tau$
is bounded in $L^q(0,T;Y)$,
with \(1\le p,q\le\infty\) and \(p<\infty\), then \(v_\tau\) is relatively
compact in \(L^p(0,T;\mathcal B)\).
\end{lem}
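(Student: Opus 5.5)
The plan is to reduce the discrete statement to the classical Simon compactness theorem for sets in $L^p(0,T;\mathcal B)$. Simon's criterion says a family is relatively compact in $L^p(0,T;\mathcal B)$ if two conditions hold. First, it is bounded in $L^p(0,T;X)$ with $X\hookrightarrow\mathcal B$ compact, or more generally its time averages $\int_{t_1}^{t_2}v_\tau\,dt$ form relatively compact sets in $\mathcal B$. Second, its time translates satisfy $\|v_\tau(\cdot+h)-v_\tau\|_{L^p(0,T-h;\mathcal B)}\to0$ as $h\to0$, uniformly in $\tau$. The first condition is immediate from the $L^p(0,T;X)$ bound and the compact embedding. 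So the work is to produce a uniform time-translate estimate from the discrete derivative bound.

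The first step is an Ehrling-type interpolation inequality, obtained by the standard contradiction argument using compactness of $X\hookrightarrow\mathcal B$ and continuity of $\mathcal B\hookrightarrow Y$. It reads: for every $\eta>0$ there is $C_\eta$ with $\|w\|_{\mathcal B}\le \eta\|w\|_X+C_\eta\|w\|_Y$ for all $w\in X$. Applied to $w=v_\tau(t+h)-v_\tau(t)$, this splits the translate into two parts. The $X$-part is bounded by $2\eta\sup_\tau\|v_\tau\|_{L^p(0,T;X)}$, which is small for small $\eta$. It therefore suffices to show
\[
\|v_\tau(\cdot+h)-v_\tau\|_{L^p(0,T-h;Y)}\to0 \quad\text{as } h\to0,
\]
uniformly in $\tau$, and this $Y$-estimate is the main obstacle.

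For this step, write $v_\tau(t+h)-v_\tau(t)$ as a telescoping sum of the jumps $v^{j}-v^{j-1}=\tau\,\partial_t^\tau v_\tau$ over the time steps $j$ lying between $t$ and $t+h$. There are at most $h/\tau+1$ such steps. This gives the pointwise bound
\[
\|v_\tau(t+h)-v_\tau(t)\|_Y\le\int_{t}^{t+h+\tau}\|\partial_t^\tau v_\tau(s)\|_Y\,ds .
\]
Two regimes in $\tau$ then need separate treatment.

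When $\tau\le h$, Hölder's inequality bounds the right-hand side by $(2h)^{1-1/q}\|\partial_t^\tau v_\tau\|_{L^q(Y)}$, with a Fubini argument in $t$ when $q=1$. Since $p<\infty$, integrating the $p$-th power over $t$ keeps the bound finite; for $q=1$ the estimate $\int_0^{T-h}\bigl(\int_t^{t+2h}g\bigr)^p\,dt$ is handled by using boundedness of the inner integral together with Fubini. The resulting bound tends to $0$ with $h$ when $q>1$. When $q=1$ it does not, so in that case I would instead use the $L^p(X)$ bound directly together with the structure of piecewise constant functions, following the Dreher–Jüngel argument.

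When $\tau>h$, the function $v_\tau(\cdot+h)-v_\tau$ is nonzero only on a set of measure at most $(T/\tau)h$, namely near the grid points, and it equals a single jump $\tau\,\partial_t^\tau v_\tau$ there. Its $L^p(Y)$ norm is then controlled by $(h/\tau)^{1/p}\,\tau\,\|\partial_t^\tau v_\tau\|$ summed appropriately. This can still be bounded using $\tau\le T$ and smallness of $h/\tau$ combined with the $X$-bounds. Alternatively, this regime is avoided entirely, since only $\tau\to0$ matters: a finite number of fixed $\tau$ values is trivially compact.

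Rather than redo every case, I would ultimately cite the discrete version proved by Dreher and Jüngel, and this outline indicates how that proof goes.
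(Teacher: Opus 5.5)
Your proposal is correct and matches the paper's approach: the paper's proof is only a citation of Simon and Dreher--J\"ungel, and your reduction to Simon's translate criterion via the Ehrling inequality is how that cited result is proved. One correction to your sketch: $q=1$ is not an obstruction, since your own Fubini step already gives $\|v_\tau(\cdot+h)-v_\tau\|_{L^p(0,T-h;Y)}\le h^{1/p}\|\partial_t^\tau v_\tau\|_{L^1(0,T;Y)}$ for all $h$ and $\tau$ (bound $\bigl(\sum_{\text{jumps in }[t,t+h)}\|v^{k+1}-v^k\|_Y\bigr)^p$ by $\bigl(\sum_k\|v^{k+1}-v^k\|_Y\bigr)^{p-1}$ times the windowed sum, and note that each jump lies in the window only for a $t$-set of measure at most $h$), and this bound tends to $0$ precisely because $p<\infty$, which makes the case split between $\tau\le h$ and $\tau>h$ and the fallback to the $X$-bound unnecessary.
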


\begin{proof}
This is the standard discrete version of the Aubin--Lions--Simon compactness
lemma; see, for instance,
\cite{simon1987compact,dreher2012compact}.
\end{proof}

\begin{cor}[Compactness of the approximate concentrations]
\label{cor:compactness-concentrations}
Let \(\tau\to0\) and let \(\varepsilon=\varepsilon(\tau)\to0\). Up to a
subsequence, there exist nonnegative functions \(c_n,c_p\) such that, for
\(i=n,p\),
\[
    c_{i,\tau,\varepsilon(\tau)}
    \to c_i
    \quad\text{strongly in }
    L^{4/3}(0,T;L^2(\Omega)),
\]
and almost everywhere in \(\Omega_T\). Moreover,
\[
    \sigma_\tau c_{i,\tau,\varepsilon(\tau)}
    \to c_i
    \quad\text{strongly in }
    L^{4/3}(0,T;L^2(\Omega)),
\]
\[
    c_{i,\tau,\varepsilon(\tau)}
    \rightharpoonup c_i
    \quad\text{weakly in }
    L^{4/3}(0,T;W^{1,4/3}(\Omega)),
\]
and
\[
    c_{i,\tau,\varepsilon(\tau)}
    \overset{*}{\rightharpoonup} c_i
    \quad\text{weakly-* in }
    L^\infty(0,T;L^2(\Omega)).
\]
Furthermore, with
\(
    \phi_{\tau,\varepsilon(\tau)}
    =
    \mathcal P
    \bigl(
    c_{p,\tau,\varepsilon(\tau)}
    -
    c_{n,\tau,\varepsilon(\tau)}
    \bigr),
\)
we have, up to a further subsequence,
\[
    \phi_{\tau,\varepsilon(\tau)}
    \rightharpoonup \phi
    \quad\text{weakly-* in }L^\infty(0,T;H^1(\Omega)),
\]
where
\(
    \phi=\mathcal P(c_p-c_n).
\)
\end{cor}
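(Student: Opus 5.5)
The plan is to apply the discrete Aubin--Lions lemma (Lemma~\ref{lem:discrete-aubin-lions}) to the bounds of Lemma~\ref{lem:global-uniform-bounds}. We take $X=W^{1,4/3}(\Omega)$, $\mathcal B=L^{4/3}(\Omega)$ and $Y=H^\ell(\Omega)'$, with $p=q=4/3$. For $d\le3$ the embedding $W^{1,4/3}\hookrightarrow L^{4/3}$ is compact, and $L^{4/3}\hookrightarrow H^\ell(\Omega)'$ is continuous because $H^\ell\hookrightarrow L^\infty$. Since $c_{i,\tau,\varepsilon}$ is bounded in $L^{4/3}(0,T;W^{1,4/3})$ and $\partial_t^\tau c_{i,\tau,\varepsilon}$ is bounded in $L^{4/3}(0,T;H^\ell{}')$, we obtain, along a subsequence, strong convergence in $L^{4/3}(0,T;L^{4/3}(\Omega))=L^{4/3}(\Omega_T)$. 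A further subsequence converges a.e. in $\Omega_T$, so the limits satisfy $c_i\ge0$.

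The next step upgrades the spatial norm to $L^2$, and I expect this to be the main obstacle, because $W^{1,4/3}$ embeds compactly into $L^2$ only when $d\le2$. In dimension three I would instead interpolate. On $\Omega$ one has $\|v\|_{L^2}\le\|v\|_{L^{4/3}}^{\theta}\|v\|_{L^{r}}^{1-\theta}$ for some $r>2$, and $W^{1,4/3}\hookrightarrow L^{12/5}$ when $d=3$, which gives such an $r$. This yields
\[
\|c^\tau_i-c_i\|_{L^2}\le\|c^\tau_i-c_i\|_{L^{4/3}}^{\theta}\,\|c^\tau_i-c_i\|_{W^{1,4/3}}^{1-\theta}.
\]
Integrating the $4/3$-power in time and applying H\"older's inequality with exponents $1/\theta$ and $1/(1-\theta)$, the factor coming from $W^{1,4/3}$ stays uniformly bounded by Lemma~\ref{lem:global-uniform-bounds}, while the factor coming from $L^{4/3}$ tends to zero. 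This gives strong convergence in $L^{4/3}(0,T;L^2(\Omega))$.

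If that interpolation causes trouble, the fallback is Vitali's theorem. Here the a.e. convergence is combined with the uniform $L^\infty(0,T;L^2)$ bound and the $L^{4/3}(0,T;L^{12/5})$ bound, which together give equi-integrability of $|c^\tau_i-c_i|^2$ in space, integrated in time.

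For the shifts $\sigma_\tau c_i$, I would use the identity $c^\tau-\sigma_\tau c^\tau=\tau\,\partial_t^\tau c^\tau$, so that the difference tends to zero in $L^{4/3}(0,T;H^\ell{}')$ at rate $O(\tau)$. The shifted sequence satisfies the same bounds, apart from the first step, which involves the smooth initial datum $\bc^0$ with bounded energy and contributes an $O(\tau^{3/4})$ error. Applying the same compactness argument to the shifted sequence, its limit must coincide with $c_i$ by uniqueness of limits in the distributional sense.

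The weak convergence in $L^{4/3}(0,T;W^{1,4/3})$ and the weak-$*$ convergence in $L^\infty(0,T;L^2)$ follow from reflexivity and Banach--Alaoglu, respectively. These limits are identified with $c_i$ through the strong convergence already established.

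Finally, $\phi_{\tau,\varepsilon}$ is bounded in $L^\infty(0,T;H^1)$ by Lemma~\ref{lem:global-uniform-bounds}, so a weak-$*$ subsequence converges to some $\phi$. The operator $\mathcal P$ is linear and bounded from $L^2$ to $H^2$ by (A3). Applied pointwise in time, this shows $\mathcal P(c_p^\tau-c_n^\tau)\to\mathcal P(c_p-c_n)$ strongly in $L^{4/3}(0,T;H^1)$, and hence $\phi=\mathcal P(c_p-c_n)$. In the Neumann case, the zero-mean constraints on the charge and on the potential pass to the limit by weak convergence.
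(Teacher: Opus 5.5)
Your proposal is correct and follows essentially the paper's route: the discrete Aubin--Lions lemma with the bounds of Lemma~\ref{lem:global-uniform-bounds}, identification of the shifted limit through $c_{i,\tau,\varepsilon(\tau)}-\sigma_\tau c_{i,\tau,\varepsilon(\tau)}=\tau\,\partial_t^\tau c_{i,\tau,\varepsilon(\tau)}\to0$ in $L^{4/3}(0,T;H^\ell(\Omega)')$, weak and weak-$*$ compactness, and continuity of $\mathcal P$. The only difference is an unnecessary detour: the obstacle you name does not exist, since for $d=3$ Rellich--Kondrachov gives compactness of $W^{1,4/3}(\Omega)\hookrightarrow L^q(\Omega)$ for every $q<12/5$ (the exponent you yourself quote), in particular for $q=2$, so the paper takes $\mathcal B=L^2(\Omega)$ directly, although your interpolation with $\theta=1/4$ is also valid.
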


\begin{proof}
Apply Lemma~\ref{lem:discrete-aubin-lions} with
\[
    X=W^{1,4/3}(\Omega),
    \qquad
    \mathcal B=L^2(\Omega),
    \qquad
    Y=H^\ell(\Omega)'.
\]
Since \(d\le3\),
\(    W^{1,4/3}(\Omega)\hookrightarrow L^2(\Omega)
\)
is compact, while
\(
    L^2(\Omega)\hookrightarrow H^\ell(\Omega)'
\)
is continuous since \(H^\ell(\Omega)\hookrightarrow L^2(\Omega)\). The
uniform bounds from Lemma~\ref{lem:global-uniform-bounds} therefore imply
relative compactness of \(c_{i,\tau,\varepsilon(\tau)}\) in
\(
    L^{4/3}(0,T;L^2(\Omega)).
\)
This gives the strong convergence after extracting a subsequence.

Therefore, the weak convergence in
\(L^{4/3}(0,T;W^{1,4/3}(\Omega))\) and the weak-* convergence in
\(L^\infty(0,T; L^2(\Omega))\) follow from the corresponding uniform bounds.
After extracting a further subsequence, the strong convergence in
\(L^{4/3}(0,T;L^2(\Omega))\) implies almost everywhere convergence in
\(\Omega_T\). Since the approximate concentrations are nonnegative, the
limits \(c_n,c_p\) are nonnegative.

It remains to identify the time-shifted limit. The estimate
\[
    c_{i,\tau,\varepsilon(\tau)}
    -
    \sigma_\tau c_{i,\tau,\varepsilon(\tau)}
    =
    \tau\,\partial_t^\tau c_{i,\tau,\varepsilon(\tau)}
    \to0
    \quad\text{in }
    L^{4/3}(0,T;H^\ell(\Omega)')
\]
does not by itself imply strong convergence in \(L^2(\Omega)\). We therefore
apply the same compactness argument to the shifted sequence. The family
\(    \sigma_\tau c_{i,\tau,\varepsilon(\tau)}
\)
satisfies the same bounds as \(c_{i,\tau,\varepsilon(\tau)}\) in
\(L^\infty(0,T;L^2(\Omega)) \cap \)   $
    L^{4/3}(0,T;W^{1,4/3}(\Omega)).$
Indeed, it is obtained by shifting the discrete values by one time step, with
only the harmless initial interval involving the initial data. Its discrete
time derivative is, up to the same harmless endpoint convention, a time shift
of \(\partial_t^\tau c_{i,\tau,\varepsilon(\tau)}\); hence it is bounded in
\(
    L^{4/3}(0,T;H^\ell(\Omega)').
\)
Applying Lemma~\ref{lem:discrete-aubin-lions} again, we find that
\(    \sigma_\tau c_{i,\tau,\varepsilon(\tau)}
\)
is relatively compact in
\(
    L^{4/3}(0,T;L^2(\Omega)).
\)

Let \(\widetilde c_i\) be the strong
\(L^{4/3}(0,T;L^2(\Omega))\)-limit of an arbitrary convergent subsequence of
\(\sigma_\tau c_{i,\tau,\varepsilon(\tau)}\). Since
\(
    c_{i,\tau,\varepsilon(\tau)}
    -
    \sigma_\tau c_{i,\tau,\varepsilon(\tau)}
    =
    \tau\,\partial_t^\tau c_{i,\tau,\varepsilon(\tau)}
\)
and
\[
    \partial_t^\tau c_{i,\tau,\varepsilon(\tau)}
    \quad\text{is bounded in }
    L^{4/3}(0,T;H^\ell(\Omega)'),
\]
we have
\[
    c_{i,\tau,\varepsilon(\tau)}
    -
    \sigma_\tau c_{i,\tau,\varepsilon(\tau)}
    \to0
    \quad\text{strongly in }
    L^{4/3}(0,T;H^\ell(\Omega)').
\]
On the other hand,
\[
    c_{i,\tau,\varepsilon(\tau)}
    \to c_i
    \quad\text{strongly in }L^{4/3}(0,T;L^2(\Omega)),
\]
and the embedding
\(
    L^2(\Omega)\hookrightarrow H^\ell(\Omega)'
\)
is continuous. Therefore, the same subsequence satisfies
\[
    \sigma_\tau c_{i,\tau,\varepsilon(\tau)}
    \to c_i
    \quad\text{strongly in }
    L^{4/3}(0,T;H^\ell(\Omega)').
\]
Thus, \(\widetilde c_i=c_i\). Since every strongly convergent subsequence of
\(\sigma_\tau c_{i,\tau,\varepsilon(\tau)}\) has the same limit, the whole
shifted family converges strongly:
\[
    \sigma_\tau c_{i,\tau,\varepsilon(\tau)}
    \to c_i
    \quad\text{strongly in }
    L^{4/3}(0,T;L^2(\Omega)).
\]

Finally, the weak-* compactness of \(\phi_{\tau,\varepsilon(\tau)}\) in
\(L^\infty(0,T;H^1(\Omega))\) follows from
Lemma~\ref{lem:global-uniform-bounds}. Since the Poisson operator
\(\mathcal P\) is linear and continuous under assumption \textup{(A3)}, the
strong convergence of
\(
    c_{p,\tau,\varepsilon(\tau)}
    -
    c_{n,\tau,\varepsilon(\tau)}
\)
in \(L^{4/3}(0,T;L^2(\Omega))\) identifies the limit as
\(
    \phi=\mathcal P(c_p-c_n).
\)
\end{proof}


\subsection{Passage to the limit}
\label{subsec:passage-to-limit}

We now pass to the limit in the approximate solutions. Let
\(\tau\to0\) and choose \(\varepsilon=\varepsilon(\tau)\to0\). To simplify
notation, we write
\(
    \bc_\tau:=\bc_{\tau,\varepsilon(\tau)},
    \;
    \bmu_\tau:=\bmu_{\tau,\varepsilon(\tau)},
    \;
    \phi_\tau:=\phi_{\tau,\varepsilon(\tau)} ,
\)
and similarly for
\(
    \boldsymbol A_{n,\tau},\;
    \boldsymbol A_{p,\tau},\;
    \boldsymbol B_\tau,\;
    \boldsymbol J_{n,\tau},\;
    \boldsymbol J_{p,\tau}.
\)
By Corollary~\ref{cor:compactness-concentrations}, up to a subsequence,
\[
    c_{i,\tau}\to c_i
    \quad\text{strongly in }
    L^{4/3}(0,T;L^2(\Omega)),
    \qquad i=n,p,
\]
and almost everywhere in \(\Omega_T\). Moreover,
\[
    c_{i,\tau}\rightharpoonup c_i
    \quad\text{weakly in }
    L^{4/3}(0,T;W^{1,4/3}(\Omega)),
\]
and
\[
    c_{i,\tau}\overset{*}{\rightharpoonup} c_i
    \quad\text{weakly-* in }
    L^\infty(0,T;L^2(\Omega)).
\]
The limits are nonnegative. The electrostatic potentials satisfy
\[
    \phi_\tau=\mathcal P(c_{p,\tau}-c_{n,\tau})
    \rightharpoonup
    \phi=\mathcal P(c_p-c_n)
    \quad\text{weakly-* in }
    L^\infty(0,T;H^1(\Omega)).
\]

The uniform dissipation bound gives, up to a further subsequence,
\[
    \boldsymbol A_{n,\tau}\rightharpoonup \boldsymbol A_n,
    \qquad
    \boldsymbol A_{p,\tau}\rightharpoonup \boldsymbol A_p,
    \qquad
    \boldsymbol B_\tau\rightharpoonup \boldsymbol B \qquad \text{ weakly in } L^2(\Omega_T;\mathbb R^d).
\]
The flux bound gives
\[
    \boldsymbol J_{n,\tau}\rightharpoonup \boldsymbol J_n,
    \qquad
    \boldsymbol J_{p,\tau}\rightharpoonup \boldsymbol J_p  \qquad \text{ weakly in } L^{4/3}(\Omega_T;\mathbb R^d).
\]

We next establish the strong convergence of the nonlinear coefficients appearing
in the physical fluxes. Since
\(
    c_{i,\tau}\to c_i
    \;\text{ strongly in }
    L^{4/3}(0,T;L^2(\Omega))
\)
and \(\{c_{i,\tau}\}_\tau\) is bounded in
\(L^\infty(0,T;L^2(\Omega))\), interpolation gives
\[
    c_{i,\tau}\to c_i
    \quad\text{strongly in }L^2(\Omega_T),
    \qquad i=n,p .
\]
Together with the almost everywhere convergence and the bounds
\[
    0\le
    \left(
    \sqrt{\frac{\delta a c_{n,\tau}}{\omega(\bc_\tau)}}
    \right)^4
    \le C c_{n,\tau}^2,
    \qquad
    0\le
    \left(
    \sqrt{\frac{\delta b c_{p,\tau}}{\omega(\bc_\tau)}}
    \right)^4
    \le C c_{p,\tau}^2,
\]
and
\[
    0\le
    \left(
    c_{n,\tau}\sqrt{\frac{ab}{\omega(\bc_\tau)}}
    \right)^4
    \le C c_{n,\tau}^2,
    \qquad
    0\le
    \left(
    c_{p,\tau}\sqrt{\frac{ab}{\omega(\bc_\tau)}}
    \right)^4
    \le C c_{p,\tau}^2,
\]
we obtain
\[
    \sqrt{\frac{\delta a c_{n,\tau}}{\omega(\bc_\tau)}}
    \to
    \sqrt{\frac{\delta a c_n}{\omega(\bc)}},
    \qquad
    \sqrt{\frac{\delta b c_{p,\tau}}{\omega(\bc_\tau)}}
    \to
    \sqrt{\frac{\delta b c_p}{\omega(\bc)}}
\]
strongly in \(L^4(\Omega_T)\), and
\[
    c_{n,\tau}\sqrt{\frac{ab}{\omega(\bc_\tau)}}
    \to
    c_n\sqrt{\frac{ab}{\omega(\bc)}},
    \qquad
    c_{p,\tau}\sqrt{\frac{ab}{\omega(\bc_\tau)}}
    \to
    c_p\sqrt{\frac{ab}{\omega(\bc)}}
\]
strongly in \(L^4(\Omega_T)\) by means of the generalized Lebesgue dominated convergence theorem.

We next identify the weak limits of the weighted entropy gradients. By
Lemma~\ref{lem:global-uniform-bounds},
\(\chi_{n,\tau}\) and \(\chi_{p,\tau}\) are bounded in
\(L^{4/3}(0,T;W^{1,4/3}(\Omega))\). Together with the strong convergence above,
this yields
\[
    \chi_{i,\tau}
    \rightharpoonup
    \chi_i
    \quad\text{weakly in }
    L^{4/3}(0,T;W^{1,4/3}(\Omega)),
    \qquad i=n,p,
\]
where
\(
    \chi_n=\sqrt{\delta a c_n/\omega(\bc)}
\)
and
\(
    \chi_p=\sqrt{\delta b c_p/\omega(\bc)}.
\)
In particular,
\(
    \nabla\chi_{i,\tau}
    \rightharpoonup
    \nabla\chi_i
\)
weakly in \(L^{4/3}(\Omega_T;\mathbb R^d)\).

Moreover, since
\(c_{p,\tau}-c_{n,\tau}\to c_p-c_n\) strongly in \(L^2(\Omega_T)\),
the continuity of the Poisson operator in \textup{(A3)} gives
\(
    \phi_\tau\to\phi
\)
strongly in \(L^2(0,T;H^2(\Omega))\). Hence
\(
    \chi_{i,\tau}\nabla\phi_\tau
    \to
    \chi_i\nabla\phi
\)
strongly in \(L^{4/3}(\Omega_T;\mathbb R^d)\).

Since \(\omega(\bc_\tau)\ge\delta\), both \(\chi_{i,\tau}\) and \(\chi_{i,\tau}/\omega(\bc_\tau)\) are uniformly bounded and converge almost everywhere to \(\chi_i\) and \(\chi_i/\omega(\bc)\), respectively. The weak convergence of \(\nabla c_{n,\tau}\) and \(\nabla c_{p,\tau}\) in \(L^{4/3}(\Omega_T;\mathbb R^d)\) therefore implies
\[
\begin{aligned}
    &\chi_{n,\tau}
    \nabla\Bigl(
        \log\omega(\bc_\tau)
        +f_{nn}c_{n,\tau}
        +f_{np}c_{p,\tau}
    \Bigr)
    \rightharpoonup
    \chi_n
    \nabla\Bigl(
        \log\omega(\bc)
        +f_{nn}c_n
        +f_{np}c_p
    \Bigr),
    \\
    &\chi_{p,\tau}
    \nabla\Bigl(
        \log\omega(\bc_\tau)
        +f_{np}c_{n,\tau}
        +f_{pp}c_{p,\tau}
    \Bigr)
    \rightharpoonup
    \chi_p
    \nabla\Bigl(
        \log\omega(\bc)
        +f_{np}c_n
        +f_{pp}c_p
    \Bigr)
\end{aligned}
\]
weakly in \(L^{4/3}(\Omega_T;\mathbb R^d)\). Here the passage to the
limit follows by testing against functions in \(L^4(\Omega_T;\mathbb R^d)\)
and using dominated convergence for the bounded coefficients.

Passing to the limit in
\eqref{eq:approximate-square-root-identities}, we conclude that
\[
\begin{aligned}
    \boldsymbol A_n
    &=
    2\nabla\chi_n
    +
    \chi_n
    \nabla\Bigl(
        \log\omega(\bc)
        -\phi
        +f_{nn}c_n
        +f_{np}c_p
    \Bigr),
    \\
    \boldsymbol A_p
    &=
    2\nabla\chi_p
    +
    \chi_p
    \nabla\Bigl(
        \log\omega(\bc)
        +\phi
        +f_{np}c_n
        +f_{pp}c_p
    \Bigr)
\end{aligned}
\]
in \(L^{4/3}(\Omega_T;\mathbb R^d)\). Since
\(\boldsymbol A_n,\boldsymbol A_p\in L^2(\Omega_T;\mathbb R^d)\), these are
precisely the constitutive identities required in
Definition~\ref{def:finite-energy-weak-solution}.

The same argument also gives
\(
    \sqrt{\frac{b c_{n,\tau}}{\delta}}
    \to
    \sqrt{\frac{b c_n}{\delta}},
    \;
    \sqrt{\frac{a c_{p,\tau}}{\delta}}
    \to
    \sqrt{\frac{a c_p}{\delta}}
\)
strongly in \(L^4(\Omega_T)\). Passing to the limit in the approximate
compatibility identity \eqref{eq:approximate-compatibility}, using the
strong--weak convergence in \(L^4\times L^2\), yields
\begin{equation}
    \boldsymbol B
    =
    \sqrt{\frac{b c_n}{\delta}}\,
    \boldsymbol A_n
    +
    \sqrt{\frac{a c_p}{\delta}}\,
    \boldsymbol A_p
    \quad\text{in }L^{4/3}(\Omega_T;\mathbb R^d).
\label{eq:limit-compatibility}
\end{equation}

We now identify the weak limits of the fluxes. From the approximate flux
representations,
\[
\boldsymbol J_{n,\tau}
=
-
\sqrt{\frac{\delta a c_{n,\tau}}{\omega(\bc_\tau)}}\,
\boldsymbol A_{n,\tau}
-
c_{n,\tau}
\sqrt{\frac{ab}{\omega(\bc_\tau)}}\,
\boldsymbol B_\tau,
\quad
\boldsymbol J_{p,\tau}
=
-
\sqrt{\frac{\delta b c_{p,\tau}}{\omega(\bc_\tau)}}\,
\boldsymbol A_{p,\tau}
-
c_{p,\tau}
\sqrt{\frac{ab}{\omega(\bc_\tau)}}\,
\boldsymbol B_\tau .
\]
The strong convergence of the coefficients in \(L^4(\Omega_T)\), together
with the weak convergence of
\(\boldsymbol A_{n,\tau}\), \(\boldsymbol A_{p,\tau}\), and
\(\boldsymbol B_\tau\) in \(L^2(\Omega_T;\mathbb R^d)\), gives
\[
\boldsymbol J_n
=
-
\sqrt{\frac{\delta a c_n}{\omega(\bc)}}\,
\boldsymbol A_n
-
c_n
\sqrt{\frac{ab}{\omega(\bc)}}\,
\boldsymbol B,
\qquad
\boldsymbol J_p
=
-
\sqrt{\frac{\delta b c_p}{\omega(\bc)}}\,
\boldsymbol A_p
-
c_p
\sqrt{\frac{ab}{\omega(\bc)}}\,
\boldsymbol B.
\]
In particular,
\(
    \boldsymbol J_n,\boldsymbol J_p
    \in L^{4/3}(\Omega_T;\mathbb R^d).
\)

We next pass to the limit in the continuity equations. From Lemma~\ref{lem:global-uniform-bounds}, the discrete time derivatives are bounded in \(L^{4/3}(0,T;H^\ell(\Omega)'). \) Together with the strong convergence \[ c_{i,\tau}\to c_i \quad\text{in }L^{4/3}(0,T;L^2(\Omega)), \] this implies, after extracting a subsequence if necessary, that \[ \partial_t^\tau c_{i,\tau} \rightharpoonup \partial_t c_i \quad\text{weakly in } L^{4/3}(0,T;H^\ell(\Omega)'). \]
The approximate equations can be written as
\[
    \partial_t^\tau c_{i,\tau}
    +
    \nabla\cdot\boldsymbol J_{i,\tau}
    +
    \varepsilon(\tau)\mathcal R_{i,\tau}
    =
    0
    \quad\text{in }H^\ell(\Omega)'.
\]
By Lemma~\ref{lem:global-uniform-bounds},
\[
    \varepsilon(\tau)\mathcal R_{i,\tau}
    \to0
    \quad\text{strongly in }
    L^2(0,T;H^\ell(\Omega)').
\]
Therefore, passing to the limit gives
\[
    \partial_t c_i+\nabla\cdot\boldsymbol J_i=0
    \quad\text{in }\mathcal D'(0,T;H^\ell(\Omega)').
\]
Equivalently, for every
\(
    \varphi_i\in L^4(0,T;W^{1,4}(\Omega)),
\)
we have
\begin{equation}
    \int_0^T
    \langle \partial_t c_i,\varphi_i\rangle\,dt
    -
    \int_{\Omega_T}
    \boldsymbol J_i\cdot\nabla\varphi_i\,dxdt
    =
    0,
    \qquad i=n,p .
\label{eq:limit-continuity-equation}
\end{equation}
Indeed, the identity is first obtained for \(H^\ell\)-test functions and then
extended by density, since
\(\boldsymbol J_i\in L^{4/3}(\Omega_T;\mathbb R^d)\). Consequently,
\(
    \partial_t c_i
    =
    -\nabla\cdot\boldsymbol J_i
    \in L^{4/3}(0,T;W^{1,4}(\Omega)').
\)

It remains to pass to the limit in the energy inequality. Let \(t\in(0,T]\)
be such that
\[
    c_{i,\tau}(t)\to c_i(t)
    \quad\text{strongly in }L^2(\Omega),
    \qquad i=n,p,
\]
which holds for almost every \(t\) after extracting a subsequence. Choose
\(m=m(\tau,t)\) such that
\(
    t\in((m-1)\tau,m\tau],
\)
so that
\(
    \bc_\tau(t)=\bc^m.
\)
The discrete entropy inequality gives
\[
    E(\bc_\tau(t))
    +
    \int_0^{m\tau}
    \int_\Omega
    \bigl(
    |\boldsymbol A_{n,\tau}|^2
    +
    |\boldsymbol A_{p,\tau}|^2
    +
    |\boldsymbol B_\tau|^2
    \bigr)\,dxds
    \le
    E(\bc^0).
\]
Since \(m\tau\to t\), the weak lower semicontinuity of the \(L^2\)-norm gives
\[
\begin{aligned}
\int_0^t\int_\Omega
\bigl(
|\boldsymbol A_n|^2
+
|\boldsymbol A_p|^2
+
|\boldsymbol B|^2
\bigr)\,dxds
\le
\liminf_{\tau\to0}
\int_0^{m\tau}
\int_\Omega
\bigl(
|\boldsymbol A_{n,\tau}|^2
+
|\boldsymbol A_{p,\tau}|^2
+
|\boldsymbol B_\tau|^2
\bigr)\,dxds .
\end{aligned}
\]
For the energy term, the strong \(L^2(\Omega)\)-convergence of
\(c_{i,\tau}(t)\), the convexity of the local free energy density, and the
continuity of the Poisson operator give
\[
    E(\bc(t))
    \le
    \liminf_{\tau\to0}E(\bc_\tau(t)).
\]
Therefore, for almost every \(t\in(0,T]\),
\begin{equation}
    E(\bc(t))
    +
    \int_0^t\int_\Omega
    \bigl(
    |\boldsymbol A_n|^2
    +
    |\boldsymbol A_p|^2
    +
    |\boldsymbol B|^2
    \bigr)\,dxds
    \le
    E(\bc_0).
\label{eq:limit-energy-inequality}
\end{equation}

We finally identify the initial condition. From
\[
    c_i\in L^{4/3}(0,T;W^{1,4/3}(\Omega)),
    \qquad
    \partial_t c_i\in L^{4/3}(0,T;W^{1,4}(\Omega)'),
\]
the function \(c_i\) admits a representative in
\(
    C([0,T];W^{1,4}(\Omega)').
\)
Since the approximate initial data converge to \(c_{i,0}\) in \(L^2(\Omega)\)
and \(c_{i,\tau}\to c_i\) in the sense above, we obtain
\[
    c_i(0)=c_{i,0}
    \quad\text{in }W^{1,4}(\Omega)' ,
    \qquad i=n,p .
\]
By the preceding identification, the fields \(\boldsymbol A_n,\boldsymbol A_p\) satisfy \eqref{eq:weak-weighted-entropy-gradients}, while \(\boldsymbol B\) is given by \eqref{eq:weak-collective-field}. Together with the flux identities, the continuity equations, the initial condition, and the energy inequality, this shows that \((\bc,\phi)\) is a finite-energy weak solution in the sense of Definition~\ref{def:finite-energy-weak-solution} for smooth strictly positive initial data.

For general finite-energy initial data \(\bc_0\), choose smooth strictly
positive approximations
\(
    \bc_0^m=(c_{n,0}^m,c_{p,0}^m)^\top
\)
such that
\(
    c_{i,0}^m\to c_{i,0}
    \;\text{ in }L^2(\Omega),
    \;
    c_{i,0}^m\ge0,
\)
and
\(    E(\bc_0^m)\to E(\bc_0).
\)
In the pure Neumann case the approximations are chosen to preserve the species
masses, so that the Poisson compatibility condition holds. 
Applying the preceding construction to \(\bc_0^m\) gives finite-energy weak solutions with estimates depending only on \(E(\bc_0^m)\). Passing to the limit \(m\to\infty\) by the same compactness argument, and repeating the constitutive identification above, we obtain a finite-energy weak solution \((\bc,\phi)\) with initial data \(\bc_0\). The lower-semicontinuity argument also yields \eqref{eq:limit-energy-inequality} with \(E(\bc_0)\) on the right-hand side.


\subsection{Degenerate rank-one steric interactions}
\label{subsec:rank-one-steric}

We conclude this section by discussing the role of the positive definiteness
assumption on \(F\). In the proof of
Theorem~\ref{thm:global-weak-intro}, the positive definite steric energy gives
species-wise \(L^2\)-control, which is then combined with the entropy
production to obtain compactness of the individual concentrations.  This argument no longer applies in the rank-one case: the energy controls only the total
density, while the charge mode remains uncontrolled.

Assume in this subsection that
\(
    F
    =
    f
    \begin{pmatrix}
    1&1\\
    1&1
    \end{pmatrix},
    \; f>0.
\)
Set
\(
    u=c_n+c_p,
    \;
    \rho=c_p-c_n .
\)
Then the steric part of the free energy is
\(
    \frac f2 u^2.
\)
Thus, the rank-one steric energy sees \(u\), but not \(c_n\) and \(c_p\)
separately. Equivalently,
\(
    \partial_{c_n}\left(\frac f2u^2\right)
    =
    \partial_{c_p}\left(\frac f2u^2\right)
    =
    fu,
\)
and therefore
\(
    \nabla c_n\cdot\nabla(fu)
    +
    \nabla c_p\cdot\nabla(fu)
    =
    f|\nabla u|^2.
\)
The total density remains visible in the steric contribution, whereas the
charge mode \(\rho\) is invisible to the rank-one quadratic energy.

\begin{prop}[Compactness of the total density in the rank-one case]
\label{prop:rank-one-total-density}
Assume
\(    F
    =
    f
    \begin{pmatrix}
    1&1\\
    1&1
    \end{pmatrix},
    \; f>0.
\)
Let
\(
    \bc^m=(c_n^m,c_p^m)^\top
\)
be a sequence of smooth positive solutions satisfying the physical flux
formulation
\[
    \partial_t c_i^m+\nabla\cdot\boldsymbol J_i^m=0,
    \qquad
    \boldsymbol J^m=-M(\bc^m)\nabla\bmu^m,
    \qquad i=n,p,
\]
with the corresponding no-flux boundary conditions. Suppose that, for every
\(T>0\), there exists a positive constant $C_T$ independent of $m$ such that
\[
    \sup_{0<t<T}E(\bc^m(t))
    +
    \int_0^T \mathcal D(\bc^m(t))\,dt
    \le C_T,
\]
and that the species masses are fixed. Define
\(    u^m=c_n^m+c_p^m,
    \;
    \rho^m=c_p^m-c_n^m .
\)
Then
\[
    u^m
    \quad\text{is bounded in}\quad
    L^\infty(0,T;L^2(\Omega))
    \cap
    L^2(0,T;H^1(\Omega)).
\]
Moreover,
\[
    \partial_t u^m
    \quad\text{is bounded in}\quad
    L^{4/3}(0,T;W^{1,4}(\Omega)'),
\]
and hence, up to a subsequence,
\[
    u^m\to u
    \qquad
    \text{strongly in }L^2(0,T;L^2(\Omega)).
\]
\end{prop}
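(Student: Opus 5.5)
The plan is to work entirely with the collective variables \(u^m\) and \(\rho^m\), and with the collective dissipation mode in \eqref{eq:intro-dissipation-decomposition}. This is the only part of \(\mathcal D\) in which the rank-one steric Hessian produces a coercive gradient term. Throughout, the pointwise bound \(|\rho^m|\le u^m\), valid since \(c_n^m,c_p^m\ge0\), replaces the species-wise control that Lemma~\ref{lem:energy-coercivity} no longer provides.

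\emph{Step 1: the \(L^\infty(0,T;L^2)\) bound.} The steric energy equals \(\frac f2\int_\Omega (u^m)^2\,dx\), the electrostatic energy is nonnegative, and \(s\ln s\ge -e^{-1}\). Hence
\[
\tfrac f2\|u^m(t)\|_{L^2}^2\le E(\bc^m(t))+C\le C_T+C ,
\]
which bounds \(u^m\), and therefore \(c_n^m,c_p^m,\rho^m\), in \(L^\infty(0,T;L^2(\Omega))\). By \textup{(A3)} it follows that \(\|\nabla\phi^m\|_{L^6}\le C_\phi\|\rho^m\|_{L^2}\le C\) uniformly in time.

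\emph{Step 2: the \(L^2(0,T;H^1)\) bound.} Since \(F\bc=fu\,(1,1)^\top\), a direct computation gives
\[
c_n\nabla\mu_n+c_p\nabla\mu_p=(1+fu)\nabla u+\rho\nabla\phi .
\]
The collective term in \eqref{eq:intro-dissipation-decomposition} then yields
\[
\mathcal D(\bc^m)\ge \int_\Omega\frac{ab\,|(1+fu^m)\nabla u^m+\rho^m\nabla\phi^m|^2}{\omega(\bc^m)}\,dx
\ge \int_\Omega \frac{ab(1+fu^m)^2|\nabla u^m|^2}{2\,\omega(\bc^m)}\,dx-\int_\Omega\frac{ab\,|\rho^m|^2|\nabla\phi^m|^2}{\omega(\bc^m)}\,dx .
\]
Because \(\delta+\min(a,b)u\le\omega\le\delta+\max(a,b)u\), we have \((1+fu)^2/\omega\ge\kappa>0\) and \(\rho^2/\omega\le u^2/\omega\le Cu\). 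The remainder is therefore bounded by
\[
C\int_\Omega u^m|\nabla\phi^m|^2\,dx\le C\|u^m\|_{L^{3/2}}\|\nabla\phi^m\|_{L^6}^2\le C\|u^m\|_{L^2}^3\le C .
\]
Integrating in time and using the dissipation bound gives \(\int_0^T\|\nabla u^m\|_{L^2}^2\,dt\le C_T\). This step is the crux, and I expect it to be the main obstacle. One has to see that the drag-induced collective mode, combined with the rank-one steric Hessian, controls \(\nabla u\) \emph{without} any weight \(1/(1+u)\). The cross term with \(\rho\nabla\phi\) is absorbed only through \(|\rho|\le u\). No analogous identity is available for \(\rho\), because the individual modes carry the factors \(\nabla\ln c_i\), which are uncontrolled near vacuum.

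\emph{Step 3: time derivative and compactness.} Summing the continuity equations gives \(\partial_t u^m=-\nabla\cdot(\boldsymbol J_n^m+\boldsymbol J_p^m)\). Writing the fluxes in the \(\boldsymbol A_n,\boldsymbol A_p,\boldsymbol B\) representation of Remark~\ref{rem:weak-dissipation-fields}, these fields are bounded in \(L^2(\Omega_T)\) by the dissipation bound. The proof of Lemma~\ref{lem:global-flux-estimate} uses only \(c_i\in L^\infty(0,T;L^2)\), which holds by Step 1, so it gives \(\boldsymbol J_n^m+\boldsymbol J_p^m\) bounded in \(L^{4/3}(\Omega_T)\). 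Testing with \(W^{1,4}\) functions, and using the no-flux condition, then bounds \(\partial_t u^m\) in \(L^{4/3}(0,T;W^{1,4}(\Omega)')\). Finally, the classical Aubin--Lions--Simon lemma with \(H^1(\Omega)\hookrightarrow\hookrightarrow L^2(\Omega)\hookrightarrow W^{1,4}(\Omega)'\) (Lemma~\ref{lem:discrete-aubin-lions} in its continuous version) yields relative compactness of \(u^m\) in \(L^2(0,T;L^2(\Omega))\).
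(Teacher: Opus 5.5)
Your proposal is correct and follows essentially the same route as the paper's proof. The steric energy bounds \(u^m\) in \(L^\infty(0,T;L^2(\Omega))\). The collective drag mode, the identity \(c_n\nabla\mu_n+c_p\nabla\mu_p=(1+fu)\nabla u+\rho\nabla\phi\), Young's inequality and \(|\rho|\le u\) then give the unweighted \(L^2(0,T;H^1(\Omega))\) bound. Finally, Lemma~\ref{lem:global-flux-estimate} together with Aubin--Lions--Simon yields the time-derivative bound and the compactness. The differences are cosmetic: you work with \(\omega(\bc^m)\) directly, where the paper first replaces it by \(1+u^m\).
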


\begin{proof}
The \(L^\infty(0,T;L^2(\Omega))\)-bound for \(u^m\) follows directly from
the energy bound. Indeed, in the rank-one case, the steric energy contains
\(
    \frac f2\int_\Omega |u^m|^2\,dx .
\)
Since the ideal entropy is bounded from below and the electrostatic energy is
nonnegative, the uniform energy bound gives
\(    \|u^m\|_{L^\infty(0,T;L^2(\Omega))}
    \le C_T.
\)

We next estimate \(\nabla u^m\). In the rank-one case,
\[
    \mu_n^m
    =
    \ln c_n^m+1-\phi^m+fu^m,
    \qquad
    \mu_p^m
    =
    \ln c_p^m+1+\phi^m+fu^m.
\]
Therefore
\[
\begin{aligned}
c_n^m\nabla\mu_n^m+c_p^m\nabla\mu_p^m
&=
\nabla c_n^m+\nabla c_p^m
+
(c_p^m-c_n^m)\nabla\phi^m
+
f(c_n^m+c_p^m)\nabla u^m
\\
&=
(1+fu^m)\nabla u^m+\rho^m\nabla\phi^m .
\end{aligned}
\]
Using the collective mode in the entropy production
\eqref{eq:intro-dissipation-decomposition}, we get
\[
\mathcal D(\bc^m)
\ge
\int_\Omega
\frac{ab}{\omega(\bc^m)}
\left|
(1+fu^m)\nabla u^m+\rho^m\nabla\phi^m
\right|^2
\,dx .
\]
Since
\(
    \omega(\bc^m)
    =
    \delta+a c_p^m+b c_n^m,
\)
and \(a,b,\delta>0\), we have
\[
    c(1+u^m)
    \le
    \omega(\bc^m)
    \le
    C(1+u^m)
\]
for constants \(c,C>0\) independent of \(m\). Hence, Young's inequality gives
\[
\mathcal D(\bc^m)
\ge
c
\int_\Omega
\frac{(1+fu^m)^2}{1+u^m}
|\nabla u^m|^2\,dx
-
C
\int_\Omega
\frac{|\rho^m|^2}{1+u^m}
|\nabla\phi^m|^2\,dx .
\]
Noting that \(c_n^m,c_p^m\ge0\),
 we have \(
    |\rho^m|\le u^m,
\)
and therefore
\(
    \frac{|\rho^m|^2}{1+u^m}
    \le
    u^m.
\)
Thus
\[
\mathcal D(\bc^m)
\ge
c
\int_\Omega
\frac{(1+fu^m)^2}{1+u^m}
|\nabla u^m|^2\,dx
-
C
\int_\Omega
u^m|\nabla\phi^m|^2\,dx .
\]

The electrostatic term is controlled by the Poisson regularity assumption.
Since
\(
    -\Delta\phi^m=\rho^m
\)
and \(|\rho^m|\le u^m\), Lemma~\ref{lem:poisson-operator} gives
\[
    \|\nabla\phi^m\|_{L^6(\Omega)}
    \le
    C\bigl(1+\|\rho^m\|_{L^2(\Omega)}\bigr)
    \le
    C\bigl(1+\|u^m\|_{L^2(\Omega)}\bigr).
\]
Using \(d\le3\), H\"older's inequality, and the boundedness of \(\Omega\), we
obtain
\[
\begin{aligned}
\int_\Omega
u^m|\nabla\phi^m|^2\,dx
\le
\|u^m\|_{L^{3/2}(\Omega)}
\|\nabla\phi^m\|_{L^6(\Omega)}^2
\le
C
\bigl(1+\|u^m\|_{L^2(\Omega)}^3\bigr),
\end{aligned}
\]
where the right-hand side is uniformly bounded on the energy sublevels. Since
\[
    \frac{(1+fu)^2}{1+u}
    \ge c_0>0
    \qquad\text{for all }u\ge0,
\]
the dissipation bound implies
\[
    \int_0^T\int_\Omega
    |\nabla u^m|^2\,dxdt
    \le C_T.
\]
Consequently, $u^m$ is bounded in $L^2(0,T;H^1(\Omega))$.

It remains to estimate the time derivative. Since
\[
    \partial_t u^m
    =
    \partial_t c_n^m+\partial_t c_p^m
    =
    -\nabla\cdot(\boldsymbol J_n^m+\boldsymbol J_p^m),
\]
it suffices to control the total flux. The energy bound gives
\(u^m\in L^\infty(0,T;L^2(\Omega))\), and since
\(
    0\le c_n^m,c_p^m\le u^m,
\)
we also have
\[
    c_n^m,c_p^m
    \quad\text{bounded in}\quad
    L^\infty(0,T;L^2(\Omega)).
\]
Together with the \(L^2\)-control of the dissipation fields from
\eqref{eq:intro-dissipation-decomposition}, Lemma~\ref{lem:global-flux-estimate}
therefore yields
\[
    \boldsymbol J_n^m,\boldsymbol J_p^m
    \quad\text{bounded in}\quad
    L^{4/3}(\Omega_T;\mathbb R^d).
\]
Hence
\[
    \partial_t u^m
    \quad\text{is bounded in}\quad
    L^{4/3}(0,T;W^{1,4}(\Omega)').
\]

By the Aubin--Lions--Simon compactness lemma
\cite{simon1987compact}, applied to
\(
    H^1(\Omega)
    \hookrightarrow L^2(\Omega)
    \hookrightarrow W^{1,4}(\Omega)',
\)
with the first embedding compact, we obtain, up to a subsequence,
\[
    u^m\to u
    \qquad
    \text{strongly in }L^2(0,T;L^2(\Omega)).
\]
This proves the proposition.
\end{proof}

\begin{remark}[Lack of compactness in the charge mode]
\label{rem:rank-one-missing-charge}
Proposition~\ref{prop:rank-one-total-density} gives compactness only for the
total density
\(
    u=c_n+c_p.
\)
It does not provide strong compactness of the charge mode
\(
    \rho=c_p-c_n.
\)
Since,
\(
    c_n=\frac{u-\rho}{2},
    \;
    c_p=\frac{u+\rho}{2},
\)
thus strong compactness of \(u\) alone does not identify the individual
species. This is the obstruction to extending the proof of
Theorem~\ref{thm:global-weak-intro} directly to the rank-one case.

The obstruction appears precisely in the nonlinear fluxes. The nonlinear flux
coefficients contain the individual concentrations through terms such as
\(
    \sqrt{\frac{c_i}{\omega(\bc)}},
    \;
    \frac{c_i}{\sqrt{\omega(\bc)}},
    \; i=n,p.
\)
Strong compactness of \(u\) does not determine the strong limits of these
coefficients, because \(c_n\) and \(c_p\) may oscillate through the unresolved
charge mode \(\rho\). Thus, the nonlinear fluxes in the finite-energy weak
formulation cannot be identified from compactness of \(u\) alone.

The same degeneracy is visible by regularizing the steric matrix. Let
\(
    F_\sigma
    =
    \begin{pmatrix}
    f+\sigma & f\\
    f & f+\sigma
    \end{pmatrix},
\)
where $\sigma>0$.
The eigenvalues of \(F_\sigma\) are
\(
    \sigma,
    \;
    2f+\sigma.
\)
Hence, the coercivity constant satisfies
\[
    \alpha_{F_\sigma}=\sigma\to0
    \qquad\text{as }\sigma\downarrow0.
\]
Accordingly, the constants in the species-wise coercivity and compactness
estimates degenerate in the rank-one limit. In particular, although for each
\(\sigma>0\) one obtains a bound of the form
\[
    \int_0^T\int_\Omega
    \frac{|\nabla c_n|^2+|\nabla c_p|^2}
    {1+c_n+c_p}
    \,dxdt
    \le C_\sigma,
\]
where the constant \(C_\sigma\) is not uniformly bounded as \(\sigma\downarrow0\). A full
finite-energy weak theory in the rank-one case would therefore require an
additional mechanism controlling the missing charge mode.
\end{remark}

\section{Long-time behavior and sharp entropy production}
\label{sec:long-time}

We now return to the dissipation functional \(\mathcal D\) in the energy identity \eqref{eq:intro-energy-dissipation}. In Section~\ref{sec:global-weak}, the decomposition \eqref{eq:intro-dissipation-decomposition} was used to obtain compactness and construct global finite-energy weak solutions. Under the pure Neumann equal-mass assumption \textup{(A5)}, we use the same entropy-production structure to study relaxation. Throughout this section, \(\phi=\phi[\bc]\) denotes the normalized Neumann solution of \( -\Delta\phi=c_p-c_n, \; \partial_\nu\phi=0, \; \int_\Omega\phi\,dx=0 . \) The homogeneous equilibrium is \(\bc^\infty=(m,m)^\top, \; \phi^\infty=0, \) and we write \( \mathcal H(\bc|\bc^\infty) := E(\bc)-E(\bc^\infty) \) for the relative entropy. The section has three parts. We first prove a sublevel entropy-entropy production inequality for positive admissible states. We then apply this estimate to the mass-preserving entropy approximation and discuss the extension of the entropy-production inequality to general finite-energy states. Finally, we identify the sharp linearized entropy-production constant in the small-sublevel limit.

\subsection{Sublevel entropy-entropy production}
\label{subsec:sublevel-eep}

We begin with the static estimate underlying the long-time analysis. In this subsection, an admissible state means a pair \( \bc=(c_n,c_p)^\top\in W^{1,\infty}(\Omega)^2, \; c_n,c_p>0 \;\text{ a.e. in }\Omega, \) satisfying the mass constraints in \textup{(A5)}. For such states, \(\mathcal D(\bc)\) is understood by the smooth expression \eqref{eq:intro-dissipation-decomposition}. Using the mass constraints, the linear terms in the expansion of \(E(\bc)-E(\bc^\infty)\) cancel. Hence, 
\begin{equation}
\begin{aligned} 
\mathcal H(\bc|\bc^\infty) = \sum_{i=n,p} \int_\Omega \left( c_i\ln\frac{c_i}{m}-c_i+m \right)\,dx + \frac12\int_\Omega (\bc-\bc^\infty)^\top F(\bc-\bc^\infty)\,dx + \frac12\int_\Omega |\nabla\phi|^2\,dx . \end{aligned} \label{eq:relative-entropy} \end{equation} Since \(F\) is symmetric positive definite and \( s\ln\frac{s}{m}-s+m\ge0 \; (\text{for } s\ge0), \) there exists \(C>0\) such that \begin{equation} \|\bc-\bc^\infty\|_{L^2(\Omega)}^2 + \|\nabla\phi\|_{L^2(\Omega)}^2 \le C\mathcal H(\bc|\bc^\infty). \label{eq:relative-entropy-controls-L2} \end{equation}

\begin{lem}[Uniqueness of equilibrium]
\label{lem:equilibrium-unique}
In the mass class \textup{(A5)}, the unique minimizer of \(E\) is
\(
    \bc^\infty=(m,m)^\top,
    \;
    \phi^\infty=0 .
\)
\end{lem}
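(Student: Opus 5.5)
The plan is to use the relative entropy representation \eqref{eq:relative-entropy}. For any admissible nonnegative state \(\bc\) in the mass class (A5), the mass constraints \(\int_\Omega c_n\,dx=\int_\Omega c_p\,dx=m|\Omega|\) cancel the linear terms in the expansion of \(E(\bc)-E(\bc^\infty)\). This identity uses only integration and the Neumann normalization of \(\phi\), so it extends from positive \(W^{1,\infty}\) states to all finite-energy nonnegative states in the mass class. The first step is to record that \(E(\bc^\infty)\) is finite and that \(\phi^\infty=\mathcal P(0)=0\). We then write
\[
E(\bc)-E(\bc^\infty)=\mathcal H(\bc|\bc^\infty)
\]
as the sum of three nonnegative integrals: the pointwise Bregman-type terms \(c_i\ln(c_i/m)-c_i+m\ge0\), the quadratic form \(\tfrac12(\bc-\bc^\infty)^\top F(\bc-\bc^\infty)\ge \tfrac{\alpha_F}{2}|\bc-\bc^\infty|^2\), and the electrostatic term \(\tfrac12\|\nabla\phi\|_{L^2}^2\). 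It follows immediately that \(E(\bc)\ge E(\bc^\infty)\), so \(\bc^\infty\) is a minimizer.

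For uniqueness, suppose \(E(\bc)=E(\bc^\infty)\). Then each of the three nonnegative integrals vanishes. Positive definiteness of \(F\) forces \(\bc=\bc^\infty\) a.e. Alternatively, the strict convexity of \(s\mapsto s\ln(s/m)-s+m\), which vanishes only at \(s=m\), gives the same conclusion. Consequently \(\rho=c_p-c_n=0\), and the normalized Neumann potential is \(\phi=\mathcal P0=0\), consistent with \(\|\nabla\phi\|_{L^2}=0\) together with \(\int_\Omega\phi\,dx=0\) on the connected domain \(\Omega\). This settles uniqueness of the pair \((\bc^\infty,\phi^\infty)\).

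The one point needing care, and the step I expect to be the main (if minor) obstacle, is the justification of the linear-term cancellation. The first variation of \(E\) at \(\bc^\infty\) is \(\bmu^\infty=(\ln m+1+f_{nn}m+f_{np}m,\;\ln m+1+f_{np}m+f_{pp}m)^\top\). This is spatially constant, because \(\phi^\infty=0\). The linear term is therefore \(\int_\Omega\bmu^\infty\cdot(\bc-\bc^\infty)\,dx=0\) by the mass constraints. The electrostatic part needs no linearization, since \(\phi^\infty=0\) makes it exactly \(\tfrac12\|\nabla\phi\|^2\).

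As an alternative route, one can use the strict convexity of \(E\) (Lemma~\ref{lem:global-convexity}, strict because of \(F\) and \(s\ln s\)) together with the constancy of \(\bmu^\infty\), which makes \(\bc^\infty\) a critical point under the mass constraints. Either argument gives the claim.
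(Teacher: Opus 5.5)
Your proof is correct, but it is organized differently from the paper's. The paper argues abstractly. $E$ is strictly convex on the mass class, and $\bc^\infty$ has spatially constant electrochemical potentials, so it is a critical point under the two mass constraints. Strict convexity then makes this critical point the unique minimizer. The step ``critical point $\Rightarrow$ minimizer'' is implicitly the subgradient inequality of Lemma~\ref{lem:global-convexity} at $\bc^\infty$, where $\int_\Omega\bmu^\infty\cdot(\bc-\bc^\infty)\,dx=0$ by the mass constraints.

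Your main route makes that inequality explicit. The identity \eqref{eq:relative-entropy} is exactly the Bregman remainder of $E$ at $\bc^\infty$, split into three pointwise nonnegative pieces. Minimality and uniqueness are therefore read off directly for all finite-energy nonnegative states, vacuum included, and the same identity yields the coercivity \eqref{eq:relative-entropy-controls-L2} used later. As you observe, it also shows that strictness already comes from $s\ln(s/m)-s+m$. So uniqueness needs only $F\ge0$ and persists, e.g., for the rank-one $F$ of Subsection~\ref{subsec:rank-one-steric}, whereas the paper attributes strictness to $F>0$.

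What the paper's critical-point phrasing buys is the form in which the lemma is actually used in Lemma~\ref{lem:zero-dissipation-rigidity}. There the limit state is only known to have constant electrochemical potentials, and one needs every such mass-constrained critical point to coincide with $\bc^\infty$. Your identity gives this too if you recentre it at that critical state, since the linear terms cancel for the same reason. The convexity alternative you mention also gives it. As written, however, your argument only compares states against $\bc^\infty$.
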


\begin{proof}
The ideal entropy is convex, the steric part is strictly convex because
\(F>0\), and the electrostatic energy is a nonnegative quadratic form of the
charge density. Hence \(E\) is strictly convex on the admissible mass class.
The state \(\bc^\infty=(m,m)^\top\) satisfies the mass constraints, gives
\(\phi^\infty=0\), and has constant electrochemical potentials. Therefore, it
is a critical point of \(E\) under the two mass constraints. Strict convexity
then gives uniqueness.
\end{proof}

We next prove the local estimate near \(\bc^\infty\). The proof uses only
smallness in relative entropy; no pointwise smallness is assumed.

\begin{lem}[Local entropy-entropy production inequality]
\label{lem:local-eep}
There exist constants \(\eta_0>0\) and \(\lambda_0>0\), depending only on
\(\Omega,m,a,b,\delta\), and \(F\), such that every admissible state satisfying
\(
    \mathcal H(\bc|\bc^\infty)\le \eta_0
\)
obeys
\[
    \mathcal D(\bc)
    \ge
    \lambda_0\mathcal H(\bc|\bc^\infty).
\]
\end{lem}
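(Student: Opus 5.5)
Our plan is to reduce the local inequality to two comparison estimates. The first bounds $\mathcal D$ from below by a weighted Fisher-type functional of the relative chemical potentials. The second bounds $\mathcal H$ from above by the same functional through a Poincaré-type argument, using only relative-entropy smallness.

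\emph{Step 1: decoupling the dissipation.} We write $\mathcal D(\bc)\ge \int_\Omega \frac{\delta}{\omega(\bc)}\bigl(a c_n|\nabla\mu_n|^2+b c_p|\nabla\mu_p|^2\bigr)dx$, dropping the collective term in \eqref{eq:intro-dissipation-decomposition}. We then use $\omega(\bc)\le C(1+c_n+c_p)$ to get
\[
\mathcal D(\bc)\ge c\sum_{i=n,p}\int_\Omega \frac{c_i}{1+c_n+c_p}|\nabla\mu_i|^2dx .
\]

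\emph{Step 2: entropy bounded by chemical-potential fluctuations.} Let $\bar\mu_i$ be a suitable constant, e.g.\ the value making $\int c_i(\mu_i-\bar\mu_i)=0$ be a convenient normalization. By the convexity inequality of Lemma~\ref{lem:global-convexity} applied with $\widetilde\bc=\bc^\infty$, and since the masses agree,
\[
\mathcal H(\bc|\bc^\infty)\le\int_\Omega(\bmu-\bar\bmu)\cdot(\bc-\bc^\infty)\,dx
\le \|\bc-\bc^\infty\|_{L^2}\,\|\bmu-\bar\bmu\|_{L^2(\text{appropriate weight})}.
\]
Together with \eqref{eq:relative-entropy-controls-L2}, this gives $\mathcal H\le C\,\|\bmu-\bar\bmu\|^2$ in a weighted $L^2$ sense. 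It then remains to show
\[
\sum_i\|\mu_i-\bar\mu_i\|^2_{L^2(w_i)}\le C\sum_i\int_\Omega \frac{c_i}{1+c_n+c_p}|\nabla\mu_i|^2dx ,
\]
a weighted Poincaré inequality with weight $w_i=c_i/(1+c_n+c_p)$.

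\emph{Step 3: the weighted Poincaré inequality near equilibrium.} This is the main obstacle, because small relative entropy gives only $L^2$ closeness, not pointwise lower bounds on $c_i$. We plan to avoid pointwise control by a smarter pairing: take $\bar\mu_i$ as the $w_i$-weighted mean and write $\int(\mu_i-\bar\mu_i)(c_i-m)=\int(\mu_i-\bar\mu_i)\,(c_i-m)$. We split $\Omega$ into the good set $G=\{|c_n-m|+|c_p-m|\le m/2\}$ and its complement. By Chebyshev and \eqref{eq:relative-entropy-controls-L2}, $|\Omega\setminus G|\le C\eta_0$.

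On $G$ the weights are comparable to constants, and the standard Poincaré inequality applies. To use it, we need a version valid on a set of large measure. We would implement this through the known inequality $\|f-\bar f\|_{L^2(\Omega)}\le C\|\nabla f\|_{L^2}$ combined with the estimate of $\mu_i$ on the bad set. On the bad set we control the contribution to $\mathcal H$ directly: since $s\ln(s/m)-s+m\ge c\,|s-m|$ when $|s-m|\ge m/2$, the bad-set mass of $|\bc-\bc^\infty|$ is small, and it is absorbed by choosing $\eta_0$ small.

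An alternative we would fall back on, if this splitting is delicate, is a compactness/contradiction argument. Suppose $\mathcal D(\bc_k)/\mathcal H(\bc_k)\to0$ with $\mathcal H(\bc_k)\to0$. Normalize $\bm w_k=(\bc_k-\bc^\infty)/\mathcal H_k^{1/2}$, which is bounded in $L^2$. The vanishing rescaled dissipation, together with the weighted gradient control of Lemma~\ref{lem:global-entropy-production} (the $\phi$ term is $O(\mathcal H)$ here), forces $\nabla(H_0\bm w_k+\bz\,\nabla\text{-potential})\to0$ on large sets. The limit $\bm w$ then has zero mean and constant linearized chemical potential $H_0\bm w+\bz\mathcal P(\bz\cdot\bm w)$. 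Since $H_0>0$ and $\mathcal P\ge0$, this forces $\bm w=0$. This contradicts the normalization, provided we show strong $L^2$ convergence of $\bm w_k$. That step is again the crux: it needs the $L\log L$ part of $\mathcal H$ to be asymptotically quadratic, which holds because $\mathcal H_k\to0$ makes $c_k\to m$ in measure.
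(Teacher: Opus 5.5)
\textbf{Primary route (Steps 2--3).} This route has a genuine gap, and it sits exactly at the obstacle you flag. Small relative entropy does not exclude near-vacuum, and both versions of your Poincar\'e step fail there.

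Take $\bc$ equal to $\bc^\infty$ except on a small ball $B_r$, where $c_n$ dips smoothly to $\epsilon\ll1$, with the mass compensated elsewhere. Then $\int_\Omega c_n|\nabla\mu_n|^2/(1+c_n+c_p)\,dx$ is of order $r^{d-2}$ uniformly in $\epsilon$, since its leading part is $4\int|\nabla\sqrt{c_n}|^2$. By contrast, $\|\mu_n-\bar\mu_n\|_{L^2}^2\gtrsim|\ln\epsilon|^2r^d$ for every constant $\bar\mu_n$, so the unweighted Poincar\'e inequality is false. If you weight by $w_n$ instead, the dual factor in Cauchy--Schwarz, $\int_\Omega|c_n-m|^2/w_n\,dx\sim m^2r^d/\epsilon$, blows up.

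The good/bad splitting does not repair this. The bad-set part of $\mathcal H$ is of order $m r^d$, while the good-set part is only $O(r^{2d})$, so the bad set can carry essentially all of $\mathcal H$. Knowing that this part is $\le C\eta_0$ in absolute terms cannot be absorbed into a quantity $\mathcal H$ that is itself $\le\eta_0$. You would need to bound it by $C\mathcal D$, e.g.\ a log-Sobolev-type estimate in $\sqrt{c_i}$ with the steric and Poisson cross terms handled, and this is not supplied. A Poincar\'e inequality ``on $G$'' is also unavailable when $\nabla\mu_i$ is uncontrolled on $\Omega\setminus G$, which may be a thin separating layer.

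\textbf{Fallback.} This is essentially the paper's proof, but you place its crux in the wrong spot. Strong $L^2$ convergence of $\bm w_k$ does not follow from asymptotic quadraticity of the $L\log L$ part plus convergence in measure. Quadraticity says nothing against oscillation: $\bm w_k$ could converge weakly to $0$ while $\|\bm w_k\|_{L^2}$ stays bounded below.

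The compactness comes from a uniform gradient bound. Apply the intermediate inequality in the proof of Lemma~\ref{lem:global-entropy-production}, $\mathcal D\ge\kappa_{\rm w}\int\frac{|\nabla\bc|^2}{1+c_n+c_p}-C\int(1+c_n+c_p)|\nabla\phi|^2$, to $\bc^k$. Use $\mathcal D(\bc^k)=o(\varepsilon_k^2)$ and $\int_\Omega(1+c_n^k+c_p^k)|\nabla\phi^k|^2dx\le C\varepsilon_k^2$, the latter from the $L^6$ Poisson bound and $\|c_p^k-c_n^k\|_{L^2}\le C\varepsilon_k$. This gives $\int_\Omega|\nabla\bm w_k|^2/(1+c_n^k+c_p^k)\,dx\le C$. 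H\"older with the $L^2$ bound on $\bc^k$ then gives $\|\bm w_k\|_{W^{1,4/3}}\le C$, and Rellich for $d\le3$ gives strong $L^2$ convergence.

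The same bound is what lets you pass from smallness of $\nabla(H_0\bm w_k+\bz\psi_k)$ ``on large sets'' to the distributional identity on all of $\Omega$, because $\|\nabla\bm w_k\|_{L^1(\Omega\setminus G_k)}\le C|\Omega\setminus G_k|^{1/4}\to0$. The paper avoids good sets altogether. It uses $\|\boldsymbol g^k/\varepsilon_k\|_{L^1}\le C(\mathcal D(\bc^k)/\varepsilon_k^2)^{1/2}$ for $\boldsymbol g^k=(c_n^k\nabla\mu_n^k,c_p^k\nabla\mu_p^k)$, and then inverts with the globally bounded coefficients $(I+C(\bc)F)^{-1}$ and $(I+C(\bc)F)^{-1}C(\bc)$.

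Finally, the contradiction does not need the exact quadratic expansion. Since $s\ln(s/m)-s+m\le C_m(s-m)^2$ on $[0,\infty)$ and $\|\nabla\phi\|_{L^2}\le C\|c_p-c_n\|_{L^2}$, one has $\mathcal H(\bc|\bc^\infty)\le C\|\bc-\bc^\infty\|_{L^2}^2$. Hence $1\le C\|\bm w_k\|_{L^2}^2\to0$ once $\bm w_k\to0$ strongly.
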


\begin{proof}
Suppose the assertion is false. Then there exists a sequence of admissible
states \(\bc^k=(c_n^k,c_p^k)^\top\) such that
\(
    \mathcal H_k:=\mathcal H(\bc^k|\bc^\infty)\to0,
    \;
    \frac{\mathcal D(\bc^k)}{\mathcal H_k}\to0.
\)

Set
\(
    \varepsilon_k:=\mathcal H_k^{1/2},
    \;
    \bq^k:=\frac{\bc^k-\bc^\infty}{\varepsilon_k},
    \;
    \psi^k:=\frac{\phi^k}{\varepsilon_k}.
\)
Then
\(
    \int_\Omega q_n^k\,dx
    =
    \int_\Omega q_p^k\,dx
    =
    0,
\)
and, with
\(
    \bz:=(-1,1)^\top,
\)
we have
\(
    -\Delta\psi^k=\bz\cdot\bq^k,
    \;
    \partial_\nu\psi^k=0,
    \;
    \int_\Omega\psi^k\,dx=0 .
\)

By \eqref{eq:relative-entropy-controls-L2},
\(
    \bq^k
    \;\text{ is bounded in }L^2(\Omega)^2,
    \;
    \psi^k
    \;\text{ is bounded in }H^2(\Omega).
\)

We claim that \(\bq^k\) is compact in \(L^2(\Omega)^2\). The weighted gradient
estimate behind Lemma~\ref{lem:global-weighted-gradient}, applied to a single
state, gives
\[
    \int_\Omega
    \frac{|\nabla c_n^k|^2+|\nabla c_p^k|^2}
         {1+c_n^k+c_p^k}
    \,dx
    \le
    C\mathcal D(\bc^k) + C\int_\Omega
(1+c_n^k+c_p^k)|\nabla\phi^k|^2\,dx.
\]
Moreover, since
\(
    c_p^k-c_n^k=\varepsilon_k(q_p^k-q_n^k)\) and as \(
    \|q_p^k-q_n^k\|_{L^2}\le C,
\)
the homogeneous Poisson estimate in \textup{(A3)} gives
\[
    \|\nabla\phi^k\|_{L^6}
    +
    \|\nabla\phi^k\|_{L^2}
    \le C\varepsilon_k .
\]
Together with the uniform \(L^2\)-bound on \(c_n^k+c_p^k\), this yields
\[
    \int_\Omega
    (1+c_n^k+c_p^k)|\nabla\phi^k|^2\,dx
    \le C\varepsilon_k^2 .
\]
Since
\(
    \mathcal D(\bc^k)=o(\varepsilon_k^2),
    \;
    \|\nabla\phi^k\|_{L^2}\le C\varepsilon_k,
\)
we obtain
\[
    \int_\Omega
    \frac{|\nabla q_n^k|^2+|\nabla q_p^k|^2}
         {1+c_n^k+c_p^k}
    \,dx
    \le C .
\]
Together with the \(L^2\)-bound on \(\bc^k\), this implies
\[
    \|\nabla q_i^k\|_{L^{4/3}(\Omega)}
    \le C,
    \qquad i=n,p,
\]
by H\"older's inequality. Hence \(\bq^k\) is bounded in
\(W^{1,4/3}(\Omega)^2\). Since \(d\le3\), the embedding
\(W^{1,4/3}(\Omega)\hookrightarrow L^2(\Omega)\) is compact. Thus, up to a
subsequence,
\(
    \bq^k\to\bq
    \; \text{ strongly in }L^2(\Omega)^2,
\) \(    \psi^k\to\psi
    \;\text{ strongly in }H^2(\Omega).
\)

We now identify the limiting linearized stationary system. Let
\(
    C^k:=\operatorname{diag}(c_n^k,c_p^k),
    \;
    \bmu^k:=\bmu(\bc^k,\phi^k).
\)
Since constants disappear under gradients, the vector
\(
    \boldsymbol g^k
    :=
    \begin{pmatrix}
    c_n^k\nabla\mu_n^k\\[1mm]
    c_p^k\nabla\mu_p^k
    \end{pmatrix}
\)
can be written as
\(
    \boldsymbol g^k
    =
    (I+C^kF)\nabla(\bc^k-\bc^\infty)
    +
    C^k\bz\nabla\phi^k .
\)
The dissipation estimate gives
\[
    \left\|
    \frac{\boldsymbol g^k}{\varepsilon_k}
    \right\|_{L^1(\Omega)}
    \le
    C\left(
    \frac{\mathcal D(\bc^k)}{\varepsilon_k^2}
    \right)^{1/2}
    \to0.
\]
Dividing the preceding identity for \(\boldsymbol g^k\) by \(\varepsilon_k\), we obtain
\(
    (I+C^kF)\nabla\bq^k
    +
    C^k\bz\nabla\psi^k
    \to0
    \;\text{ in }L^1(\Omega).
\)

Because \(F>0\), the matrices \(I+C^kF\) are invertible for
\(c_n^k,c_p^k\ge0\). Moreover, the coefficient maps
\[
    \bc\mapsto (I+C(\bc)F)^{-1},
    \qquad
    \bc\mapsto (I+C(\bc)F)^{-1}C(\bc),
    \qquad
    C(\bc):=\operatorname{diag}(c_n,c_p),
\]
are globally bounded and continuous on \([0,\infty)^2\). Indeed,
\[
    \det(I+C(\bc)F)
    =
    1+c_n f_{nn}+c_p f_{pp}+c_nc_p\det F>0,
\]
and the entries are ratios of polynomial expressions divided by this
determinant. Since \(\bc^k\to\bc^\infty\) strongly in \(L^2\) and a.e., we may
pass to the limit in the inverted relation and obtain
\[
    (I+mF)\nabla\bq
    +
    m\bz\nabla\psi
    =
    0
    \quad\text{in }\mathcal D'(\Omega).
\]
Equivalently, with
\(
    H_0:=m^{-1}I+F,
\)
we have
\[
    H_0\nabla\bq+\bz\nabla\psi=0,
    \qquad\text{or}\qquad
    \nabla(H_0\bq+\bz\psi)=0 .
\]
Since \(\bq\) and \(\psi\) have zero mean, it follows that
\(
    H_0\bq+\bz\psi=0.
\)
Using the limiting Poisson equation
\(
    -\Delta\psi=\bz\cdot\bq,
\)
we obtain
\(
    -\Delta\psi
    +
    \bz^\top H_0^{-1}\bz\,\psi
    =
    0 .
\)
Testing by \(\psi\) gives \(\psi=0\), and hence \(\bq=0\) follows from \(H_0 \bq + \bz \psi =0\).

It remains to contradict the normalization. Since
\(\mathcal H_k=\varepsilon_k^2\), we have
\(
    1=
    \frac{\mathcal H(\bc^k|\bc^\infty)}{\varepsilon_k^2}.
\)
The elementary quadratic expansion
\[
\begin{aligned}
\frac1{\varepsilon_k^2}
\int_\Omega
\left[
(m+\varepsilon_k q_i^k)
\ln\frac{m+\varepsilon_k q_i^k}{m}
-(m+\varepsilon_k q_i^k)+m
\right]dx
\longrightarrow
\frac1{2m}\int_\Omega |q_i|^2\,dx
\end{aligned}
\]
holds whenever \(q_i^k\to q_i\) strongly in \(L^2(\Omega)\) and
\(m+\varepsilon_k q_i^k\ge0\). It follows from Taylor expansion on the set
\(|\varepsilon_k q_i^k|\le m/2\) and uniform integrability on its complement.
Applying this expansion to \eqref{eq:relative-entropy}, and using the strong
convergences above, yields
\[
    1
    =
    \frac1{2m}
    \sum_{i=n,p}\int_\Omega |q_i|^2\,dx
    +
    \frac12\int_\Omega \bq^\top F\bq\,dx
    +
    \frac12\int_\Omega|\nabla\psi|^2\,dx .
\]
This contradicts \(\bq=0\) and \(\psi=0\). The desired inequality follows.
\end{proof}

The next lemma rules out non-equilibrium zero-dissipation limits on bounded
energy sublevels.

\begin{lem}[Zero-dissipation rigidity]
\label{lem:zero-dissipation-rigidity}
Let \(\bc^k=(c_n^k,c_p^k)^\top\) be admissible states satisfying
\[
    E(\bc^k)\le E_0,
    \qquad
    \mathcal D(\bc^k)\to0 .
\]
Then, up to a subsequence,
\[
    \bc^k\to\bc^\infty
    \quad\text{strongly in }L^2(\Omega)^2,
    \qquad
    \phi^k\to0
    \quad\text{strongly in }H^1(\Omega).
\]
\end{lem}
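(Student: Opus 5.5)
The argument runs in close parallel to the compactness part of Lemma~\ref{lem:local-eep}, but without rescaling. We first collect uniform bounds. By Lemma~\ref{lem:energy-coercivity}, $E(\bc^k)\le E_0$ gives $\|\bc^k\|_{L^2}\le C(E_0)$. By (A3), $\phi^k=\mathcal P(c_p^k-c_n^k)$ is then bounded in $H^2(\Omega)$. Next, Lemma~\ref{lem:global-entropy-production} applied to each $\bc^k$ gives
\[
\int_\Omega\frac{|\nabla c_n^k|^2+|\nabla c_p^k|^2}{1+c_n^k+c_p^k}\,dx\le \kappa_{\rm w}^{-1}\bigl(\mathcal D(\bc^k)+C(1+E_0)^{3/2}\bigr)\le C .
\]
The H\"older argument of Lemma~\ref{lem:global-weighted-gradient}, applied at a single time, then bounds $\bc^k$ in $W^{1,4/3}(\Omega)^2$. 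Since $d\le3$, this embeds compactly into $L^2$. Up to a subsequence we therefore obtain $\bc^k\to\bc$ strongly in $L^2$ and a.e., with $\bc\ge0$ satisfying the mass constraints. Also $\phi^k\to\phi=\mathcal P(c_p-c_n)$ strongly in $H^1$, and in fact in $H^2$, by the continuity of $\mathcal P$.

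Next we identify the limit through the collective and weighted modes. We set $\boldsymbol g^k=(c_n^k\nabla\mu_n^k,\,c_p^k\nabla\mu_p^k)^\top=(I+C^kF)\nabla\bc^k+C^k\bz\nabla\phi^k$, as in the proof of Lemma~\ref{lem:local-eep}. From the decomposition \eqref{eq:intro-dissipation-decomposition}, pointwise $|c_i\nabla\mu_i|\le C\sqrt{c_i\,\omega(\bc)}\,|\boldsymbol A_i|$. Cauchy--Schwarz with $\int\omega(\bc^k)c_i^k\le C$ then gives $\|\boldsymbol g^k\|_{L^1}\le C\mathcal D(\bc^k)^{1/2}\to0$. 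We invert with the bounded continuous maps $(I+C(\bc)F)^{-1}$ and $(I+C(\bc)F)^{-1}C(\bc)$ from the proof of Lemma~\ref{lem:local-eep}, which yields
\[
\nabla\bc^k+(I+C^kF)^{-1}C^k\bz\nabla\phi^k\to0\quad\text{in }L^1 .
\]
The coefficient converges a.e. and boundedly, and $\nabla\phi^k\to\nabla\phi$ in $L^2$. Together with $\nabla\bc^k\rightharpoonup\nabla\bc$ weakly in $L^{4/3}$, this gives in the limit
\[
\nabla\bc+(I+C(\bc)F)^{-1}C(\bc)\bz\nabla\phi=0,
\]
that is, $(I+C(\bc)F)\nabla\bc+C(\bc)\bz\nabla\phi=0$, in $\mathcal D'(\Omega)$, with $\bc\in W^{1,4/3}$.

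The main obstacle is the last step: concluding $\bc=\bc^\infty$ from this degenerate limiting relation. Vacuum is possible, and on $\{c_i=0\}$ the relation says nothing about $\mu_i$, so we cannot simply write $\nabla\bmu=0$. We therefore avoid $\mu_i$ and test the relation directly. Its $i$-th component reads $\nabla c_i+c_i\nabla(F\bc)_i+c_iz_i\nabla\phi=0$. We multiply by $z_i\phi$, sum over $i$, and integrate. This is justified since $\phi\in H^2\subset L^\infty$ and $\bc\in L^2$, though the product $c_i\nabla(F\bc)_i$ requires care and should be handled with the truncation $\min(c_i,L)$. After one integration by parts, the Neumann condition together with the Poisson equation gives $\int\nabla(c_p-c_n)\cdot\nabla\phi=\int|\Delta\phi|^2$.

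The plan is to obtain from this a coercive identity of the form
\[
\int|\Delta\phi|^2+\int(c_n+c_p)|\nabla\phi|^2+\text{(steric cross terms)}=0 .
\]
This should force $\phi=0$. If the steric cross terms resist sign control, we fall back on the following route. On each connected component of $\{c_i>0\}$, where $\ln c_i\in W^{1,1}_{\rm loc}$, the relation gives $\mu_i=$ const. Since $\mu_i\to-\infty$ at the boundary of the positivity set while $\phi$ and $\bc$ remain bounded there, $c_i>0$ a.e. Hence $\bc$ is a positive critical point of $E$ under the mass constraints. Lemma~\ref{lem:equilibrium-unique}, via strict convexity, then yields $\bc=\bc^\infty$, $\phi=0$. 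The strong convergences obtained in the first step complete the proof.
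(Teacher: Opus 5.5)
Up to the limiting relation $\nabla\bc+(I+C(\bc)F)^{-1}C(\bc)\bz\nabla\phi=0$ your argument matches the paper's: compactness from the weighted gradient bound, $\|\boldsymbol g^k\|_{L^1}\le C\mathcal D(\bc^k)^{1/2}$, and inversion with bounded coefficients. Your justification of the $L^1$ bound via $|c_i\nabla\mu_i|\le C\sqrt{c_i\,\omega(\bc)}\,|\boldsymbol A_i|$ is correct.

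The gap is in the final identification step. Your primary route, testing with $z_i\nabla\phi$, leaves the unsigned term
\[
\int_\Omega\bigl(c_p\nabla(F\bc)_p-c_n\nabla(F\bc)_n\bigr)\cdot\nabla\phi\,dx,
\]
so it does not close. Your fallback needs three things: $\{c_i>0\}$ must be open, its connected components must make sense, and $\bc$ must stay bounded near their boundaries. At the regularity you actually have, $\bc\in W^{1,4/3}(\Omega)$, none of this is available for $d=2,3$, because such functions need be neither continuous nor bounded. When $f_{np}>0$, nothing you have said excludes $f_{np}c_j\to+\infty$ exactly where $c_i\to0$, which would compensate $\ln c_i\to-\infty$ in $\mu_i=\text{const}$. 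So the claim that ``$\phi$ and $\bc$ remain bounded there'' is unjustified as written.

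The missing step is a regularity bootstrap from the limiting relation itself. Since $(I+C(\bc)F)^{-1}C(\bc)$ is globally bounded on $[0,\infty)^2$, you get $|\nabla\bc|\le C|\nabla\phi|$ a.e. By (A3), $\nabla\phi\in L^6(\Omega)$, hence $\bc\in W^{1,6}(\Omega)\hookrightarrow C^{0,1/2}(\overline\Omega)$ for $d\le3$.

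With this continuity your fallback becomes rigorous:
\begin{itemize}
\item On each component $U$ of the open set $\{c_i>0\}$, $\mu_i$ is constant.
\item Continuity of $c_i$, $\phi$ and $(F\bc)_i$ up to $\partial U\cap\Omega$ then forces $\partial U\cap\Omega=\emptyset$.
\item Connectedness of $\Omega$ together with positive mass gives $U=\Omega$, and Lemma~\ref{lem:equilibrium-unique} finishes the argument.
\end{itemize}
The paper instead stops at $\nabla\bc\in L^2$, i.e.\ $\bc\in H^1$, and cites a stationary drift rigidity ($ue^V=\text{const}$). Once upgraded, your route is more self-contained than that.

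Alternatively, with the same $W^{1,6}$ regularity your test-function idea closes if you test the $i$-th component with $\nabla\bigl((F\bc)_i+z_i\phi\bigr)$ instead of $z_i\nabla\phi$. Summing over $i$ and using $\int_\Omega\nabla(c_p-c_n)\cdot\nabla\phi\,dx=\int_\Omega(c_p-c_n)^2\,dx$ completes the square:
\[
\int_\Omega\nabla\bc^\top F\nabla\bc\,dx+\int_\Omega(c_p-c_n)^2\,dx+\sum_{i=n,p}\int_\Omega c_i\bigl|\nabla\bigl((F\bc)_i+z_i\phi\bigr)\bigr|^2\,dx=0 .
\]
This gives $\nabla\bc=0$, hence $\bc=\bc^\infty$ and $\phi=0$ directly, with no positivity argument and no appeal to uniqueness of the equilibrium.
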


\begin{proof}
The energy bound gives \(\bc^k\) bounded in \(L^2(\Omega)^2\) and
\(\phi^k\) bounded in \(H^1(\Omega)\). The weighted gradient estimate used
above gives
\[
    \int_\Omega
    \frac{|\nabla c_n^k|^2+|\nabla c_p^k|^2}
         {1+c_n^k+c_p^k}
    \,dx
    \le C(E_0).
\]
Hence \(\bc^k\) is bounded in \(W^{1,4/3}(\Omega)^2\), and therefore, up to a
subsequence,
\[
    \bc^k\to\bc
    \quad\text{strongly in }L^2(\Omega)^2
    \quad\text{and a.e. in }\Omega .
\]
Standard elliptic regularity of the Neumann Poisson problem yields
\[
    \phi^k\to\phi
    \quad\text{strongly in }H^1(\Omega).
\]

As in the proof of Lemma~\ref{lem:local-eep}, set
\(
    C^k=\operatorname{diag}(c_n^k,c_p^k),
    \;
    \boldsymbol g^k
    =
    \begin{pmatrix}
    c_n^k\nabla\mu_n^k\\[1mm]
    c_p^k\nabla\mu_p^k
    \end{pmatrix}.
\)
Then
\(
    \boldsymbol g^k
    =
    (I+C^kF)\nabla\bc^k
    +
    C^k\bz\nabla\phi^k.
\)
The dissipation bound implies
\(
    \|\boldsymbol g^k\|_{L^1(\Omega)}
    \le C\mathcal D(\bc^k)^{1/2}\to0 .
\)
Repeating the coefficient argument from Lemma~\ref{lem:local-eep}, we pass to
the limit and obtain
\[
    (I+CF)\nabla\bc
    +
    C\bz\nabla\phi
    =
    0
    \quad\text{in }\mathcal D'(\Omega),
    \qquad
    C:=\operatorname{diag}(c_n,c_p).
\]
Equivalently, for \(i=n,p\),
\[
    \nabla c_i
    +
    c_i\nabla\bigl(z_i\phi+(F\bc)_i\bigr)
    =
    0,
    \qquad
    z_n=-1,\quad z_p=1 .
\]
Since the map
\(
    \bc\mapsto (I+C(\bc)F)^{-1}C(\bc)
\)
is globally bounded on \([0,\infty)^2\), the inverted relation gives
\(
    \nabla\bc
    =
    -(I+CF)^{-1}C\bz\nabla\phi
    \in L^2(\Omega).
\)
Hence \(c_i\in H^1(\Omega)\). By the standard stationary drift
rigidity for
\[
    \nabla u+u\nabla V=0
    \quad\text{in }\mathcal D'(\Omega),
    \qquad
    u,V\in H^1(\Omega),
\]
on a connected domain, one has \(ue^V=\text{constant}\) a.e.  
Therefore
\[
    \ln c_i+z_i\phi+(F\bc)_i
    =
    \text{constant},
    \qquad i=n,p .
\]
Thus \(\bc\) is a critical point of \(E\) under the two mass constraints. By
Lemma~\ref{lem:equilibrium-unique}, we can identify
\(
    \bc=\bc^\infty,
    \;
    \phi=0 .
\)
Since every convergent subsequence has the same limit, the convergence holds
for the whole sequence.
\end{proof}

We now upgrade the local estimate to bounded energy sublevels.

\begin{thm}[Sublevel entropy-entropy production]
\label{thm:sublevel-eep}
Assume \textup{(A1)}--\textup{(A5)}. For every
\(E_0>E(\bc^\infty)\), there exists a constant \(\lambda(E_0)>0\) such that
every admissible state satisfying
\(
    E(\bc)\le E_0
\)
obeys
\(
    \mathcal D(\bc)
    \ge
    \lambda(E_0)\mathcal H(\bc|\bc^\infty).
\)
\end{thm}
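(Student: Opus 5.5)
The argument splits the sublevel into a region near equilibrium and a region away from it, and combines Lemma~\ref{lem:local-eep} with Lemma~\ref{lem:zero-dissipation-rigidity}. Fix \(E_0>E(\bc^\infty)\) and let \(\eta_0,\lambda_0\) be the constants from Lemma~\ref{lem:local-eep}. If an admissible state satisfies \(\mathcal H(\bc|\bc^\infty)\le\eta_0\), the local inequality already gives \(\mathcal D(\bc)\ge\lambda_0\mathcal H(\bc|\bc^\infty)\). It remains to treat states with
\[
\eta_0<\mathcal H(\bc|\bc^\infty)\le E_0-E(\bc^\infty).
\]
On this set the relative entropy is bounded above by \(H_{\max}:=E_0-E(\bc^\infty)\). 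It therefore suffices to show that \(\mathcal D\) is bounded below by some \(d_0>0\). We would then set
\[
\lambda(E_0):=\min\{\lambda_0,\,d_0/H_{\max}\}.
\]

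To prove the lower bound \(d_0>0\), we argue by contradiction. Suppose there are admissible \(\bc^k\) with \(E(\bc^k)\le E_0\), \(\mathcal H(\bc^k|\bc^\infty)>\eta_0\), and \(\mathcal D(\bc^k)\to0\). By Lemma~\ref{lem:zero-dissipation-rigidity}, along a subsequence \(\bc^k\to\bc^\infty\) strongly in \(L^2(\Omega)^2\) and \(\phi^k\to0\) in \(H^1(\Omega)\). We then show that this forces \(\mathcal H(\bc^k|\bc^\infty)\to0\), which contradicts \(\mathcal H>\eta_0\). We use the representation \eqref{eq:relative-entropy} and handle its three parts separately.
\begin{itemize}
\item The steric quadratic term and the electrostatic term \(\tfrac12\|\nabla\phi^k\|_{L^2}^2\) tend to zero directly by the strong convergences.
\item For the entropy term, write \(\Psi(s)=s\ln(s/m)-s+m\). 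This function is continuous and nonnegative, and satisfies \(\Psi(s)\le C(1+s^2)\).
\item Since \(c_i^k\to m\) strongly in \(L^2\), the family \((c_i^k)^2\) is uniformly integrable. Together with a.e. convergence along a further subsequence, Vitali's theorem gives \(\int_\Omega\Psi(c_i^k)\,dx\to\int_\Omega\Psi(m)\,dx=0\).
\end{itemize}
Hence \(\mathcal H(\bc^k|\bc^\infty)\to0\), the desired contradiction. This produces \(d_0>0\) and completes the proof.

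The main point requiring care is that Lemma~\ref{lem:zero-dissipation-rigidity} only gives \(L^2\) convergence. We therefore need continuity of \(\mathcal H\) under this convergence, not merely lower semicontinuity. The quadratic growth bound on \(\Psi\) combined with Vitali's theorem supplies exactly this, so no further estimate beyond the two cited lemmas is needed.
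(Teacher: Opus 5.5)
Your proposal is correct and takes essentially the same route as the paper. Both use the local inequality of Lemma~\ref{lem:local-eep} near equilibrium and the zero-dissipation rigidity of Lemma~\ref{lem:zero-dissipation-rigidity} away from it, and both close the argument with the bound $s\ln(s/m)-s+m\le C(1+s^2)$ plus uniform integrability to force $\mathcal H(\bc^k|\bc^\infty)\to0$. The only difference is cosmetic: you split the sublevel at $\eta_0$ and obtain the explicit constant $\min\{\lambda_0,d_0/H_{\max}\}$, whereas the paper runs a single contradiction argument with $\mathcal D(\bc^k)\le k^{-1}\mathcal H(\bc^k|\bc^\infty)$ and then distinguishes whether $\mathcal H(\bc^k|\bc^\infty)\to0$.
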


\begin{proof}
Suppose the assertion is false, then there exists a sequence of admissible states \(\bc^k\) such
that
\[
    E(\bc^k)\le E_0,
    \qquad
    \mathcal D(\bc^k)
    \le
    \frac1k\mathcal H(\bc^k|\bc^\infty).
\]
If
\(
    \mathcal H(\bc^k|\bc^\infty)\to0,
\)
then Lemma~\ref{lem:local-eep} applies for all sufficiently large \(k\), and
gives
\[
    \mathcal D(\bc^k)
    \ge
    \lambda_0\mathcal H(\bc^k|\bc^\infty),
\]
contradicting the preceding inequality.

Hence, up to a subsequence, there exists \(\eta>0\) such that
\(
    \mathcal H(\bc^k|\bc^\infty)\ge\eta .
\)
Since \(E(\bc^k)\le E_0\), the relative entropy is uniformly bounded, and
therefore
\(
    \mathcal D(\bc^k)\to0.
\)
Lemma~\ref{lem:zero-dissipation-rigidity} yields
\[
    \bc^k\to\bc^\infty
    \quad\text{strongly in }L^2(\Omega)^2,
    \qquad
    \phi^k\to0
    \quad\text{strongly in }H^1(\Omega).
\]
The steric and electrostatic parts of
\(\mathcal H(\bc^k|\bc^\infty)\) converge to zero. The logarithmic part also
converges to zero: the integrand
\(
    s\ln\frac{s}{m}-s+m
\)
is bounded above by \(C(1+s^2)\) on \([0,\infty)\), while
\(c_i^k\to m\) strongly in \(L^2(\Omega)\). Hence, the logarithmic integrands
are uniformly integrable and converge a.e. to zero. Thus
\(
    \mathcal H(\bc^k|\bc^\infty)\to0,
\)
contradicting \(\mathcal H(\bc^k|\bc^\infty)\ge\eta\). This completes the proof of the theorem.
\end{proof}


\subsection{Decay via the entropy approximation}
\label{subsec:decay-approximation}
We now apply Theorem~\ref{thm:sublevel-eep} to the mass-preserving entropy
approximation in the pure Neumann setting. Let
\(\bc_\tau^k=(c_{n,\tau}^k,c_{p,\tau}^k)^\top\) denote the discrete states
constructed by the entropy-variable scheme, and let \(t_k=k\tau\). In the
pure Neumann equal-mass case, the scheme preserves the two masses in
\textup{(A5)}. Moreover, the discrete entropy inequality gives
\begin{equation}
    E(\bc_\tau^k)
    +
    \tau\mathcal D(\bc_\tau^k)
    \le
    E(\bc_\tau^{k-1}),
    \qquad k\ge1,
\label{eq:discrete-energy-long-time}
\end{equation}
where \(\mathcal D(\bc_\tau^k)\) is the smooth entropy production
\eqref{eq:intro-dissipation-decomposition}. We choose the initial
approximation so that
\[
    \bc_\tau^0\to\bc_0
    \quad\text{strongly in }L^2(\Omega)^2,
    \qquad
    E(\bc_\tau^0)\to E(\bc_0).
\]

Fix \(E_*>E(\bc_0)\). For all sufficiently small \(\tau\), the discrete
energy inequality implies
\[
    E(\bc_\tau^k)\le E_*,
    \qquad k\ge0 .
\]
Hence, Theorem~\ref{thm:sublevel-eep} gives
\[
    \mathcal D(\bc_\tau^k)
    \ge
    \lambda(E_*)\mathcal H(\bc_\tau^k|\bc^\infty).
\]
Subtracting \(E(\bc^\infty)\) from
\eqref{eq:discrete-energy-long-time}, we obtain
\begin{equation}
    \mathcal H(\bc_\tau^k|\bc^\infty)
    +
    \tau\lambda(E_*)
    \mathcal H(\bc_\tau^k|\bc^\infty)
    \le
    \mathcal H(\bc_\tau^{k-1}|\bc^\infty).
\label{eq:discrete-H-decay-step}
\end{equation}
Iterating gives us
\begin{equation}
    \mathcal H(\bc_\tau^k|\bc^\infty)
    \le
    (1+\tau\lambda(E_*))^{-k}
    \mathcal H(\bc_\tau^0|\bc^\infty).
\label{eq:discrete-H-decay}
\end{equation}

Let \(\bc_\tau\) be the piecewise constant interpolation in time. Along the
same subsequence used in Section~\ref{sec:global-weak}, we have
\[
    \bc_\tau(t)\to\bc(t)
    \quad\text{strongly in }L^2(\Omega)^2
\]
for a.e. \(t>0\). By the lower semicontinuity of the relative entropy and the
convergence of the initial energies, passing to the limit in
\eqref{eq:discrete-H-decay} gives
\begin{equation}
    \mathcal H(\bc(t)|\bc^\infty)
    \le
    e^{-\lambda(E_*)t}
    \mathcal H(\bc_0|\bc^\infty)
    \qquad\text{for a.e. }t>0 .
\label{eq:weak-H-decay}
\end{equation}
Since \(E_*>E(\bc_0)\) is arbitrary, this proves the following result.

\begin{thm}[Exponential decay of approximation-generated weak solutions]
\label{thm:approx-generated-decay}
Assume \textup{(A1)}--\textup{(A5)}. Let \((\bc,\phi)\) be a finite-energy
weak solution obtained as a limit of the mass-preserving entropy
approximation in the pure Neumann setting. Then, for every
\(E_*>E(\bc_0)\),
\begin{equation}
    \mathcal H(\bc(t)|\bc^\infty)
    \le
    e^{-\lambda(E_*)t}
    \mathcal H(\bc_0|\bc^\infty)
    \qquad\text{for a.e. }t>0 .
\label{eq:approx-generated-decay}
\end{equation}
Consequently, along a set of times of full measure,
\[
    \bc(t)\to\bc^\infty
    \quad\text{strongly in }L^2(\Omega)^2,
    \qquad
    \phi(t)\to0
    \quad\text{strongly in }H^1(\Omega)
\]
as \(t\to\infty\).
\end{thm}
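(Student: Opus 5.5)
The plan is to pass to the limit in the discrete decay estimate along the approximation sequence, essentially formalizing the argument sketched just before the statement. Fix \(E_*>E(\bc_0)\). The initial approximations \(\bc_\tau^0\) are chosen mass-preserving with \(E(\bc_\tau^0)\to E(\bc_0)\), so \(E(\bc_\tau^0)\le E_*\) for all small \(\tau\). By the discrete entropy inequality \eqref{eq:discrete-energy-long-time}, the energies are nonincreasing, hence \(E(\bc_\tau^k)\le E_*\) for all \(k\ge0\). Each \(\bc_\tau^k\) is strictly positive, belongs to \(W^{1,\infty}(\Omega)^2\) (it is the image of \(H^\ell\) entropy variables), and satisfies the mass constraints of (A5). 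It is therefore admissible in the sense of Subsection~\ref{subsec:sublevel-eep}.

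Theorem~\ref{thm:sublevel-eep} then applies to every discrete state and gives \(\mathcal D(\bc_\tau^k)\ge\lambda(E_*)\mathcal H(\bc_\tau^k|\bc^\infty)\). Inserting this into \eqref{eq:discrete-energy-long-time} and subtracting \(E(\bc^\infty)\) yields \eqref{eq:discrete-H-decay-step}. Iterating gives \eqref{eq:discrete-H-decay}. For \(t\in((k-1)\tau,k\tau]\) we have \(k\ge t/\tau\). Since \(\ln(1+\tau\lambda)\ge\tau\lambda-\tau^2\lambda^2/2\), this gives
\[
(1+\tau\lambda(E_*))^{-k}\le e^{-\lambda(E_*)t+O(\tau)},
\]
uniformly for \(t\) in bounded sets. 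Hence
\[
\mathcal H(\bc_\tau(t)|\bc^\infty)\le e^{-\lambda(E_*)t+O(\tau)}\mathcal H(\bc_\tau^0|\bc^\infty).
\]

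To pass to the limit, work on an arbitrary horizon \(T\) and use a diagonal subsequence in \(T\in\mathbb N\). By Corollary~\ref{cor:compactness-concentrations} and interpolation, \(\bc_\tau\to\bc\) strongly in \(L^2(\Omega_T)\). After a further subsequence, \(\bc_\tau(t)\to\bc(t)\) in \(L^2(\Omega)^2\) for a.e. \(t\). The masses are preserved in the limit, so \(\mathcal H(\bc(t)|\bc^\infty)=E(\bc(t))-E(\bc^\infty)\).

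\textbf{Key point.} The main step is lower semicontinuity of \(\mathcal H\) under strong \(L^2\) convergence, and the argument is the same as for the energy term in the proof of Theorem~\ref{thm:global-weak-intro}. The steric and electrostatic parts are continuous on \(L^2\) by (A3). The entropy part is convex and nonnegative, so Fatou's lemma applies, and the integrand bound \(C(1+s^2)\) even gives continuity. On the right-hand side, \(\mathcal H(\bc_\tau^0|\bc^\infty)\to\mathcal H(\bc_0|\bc^\infty)\) because \(E(\bc_\tau^0)\to E(\bc_0)\). Letting \(\tau\to0\) gives \eqref{eq:approx-generated-decay} for a.e. \(t\).

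Finally, \eqref{eq:relative-entropy-controls-L2} converts the decay into
\[
\|\bc(t)-\bc^\infty\|_{L^2}^2+\|\nabla\phi(t)\|_{L^2}^2\le Ce^{-\lambda(E_*)t}.
\]
Poincaré's inequality for the zero-mean \(\phi(t)\) then gives \(\phi(t)\to0\) in \(H^1(\Omega)\) along the full-measure set of times.
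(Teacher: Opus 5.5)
Your proposal is correct and follows the paper's argument. You apply Theorem~\ref{thm:sublevel-eep} to the mass-preserving discrete states, which stay in the sublevel $\{E\le E_*\}$ by the discrete entropy inequality, iterate \eqref{eq:discrete-H-decay-step}, and pass to the limit using a.e.-in-time strong $L^2$ convergence, lower semicontinuity of $\mathcal H$, and convergence of the initial energies. You also make explicit several steps the paper leaves implicit: the conversion of $(1+\tau\lambda(E_*))^{-k}$ into $e^{-\lambda(E_*)t+O(\tau)}$, the diagonal extraction over horizons $T$, and the final $L^2$/$H^1$ convergence via \eqref{eq:relative-entropy-controls-L2} and Poincar\'e's inequality.
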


\begin{remark}[Entropy production at finite energy]
\label{rem:dissipation-defect}
The approximation qualification in
Theorem~\ref{thm:approx-generated-decay} is not due to a loss of
constitutive information in the finite-energy weak formulation. Indeed,
the fields \(\boldsymbol A_n,\boldsymbol A_p,\boldsymbol B\) are
intrinsically determined by \((\bc,\phi)\) through
\eqref{eq:weak-weighted-entropy-gradients} and
\eqref{eq:weak-collective-field}. For a finite-energy weak solution, define
its finite-energy entropy production by
\[
    \mathcal D_{\rm fe}(t)
    :=
    \int_\Omega
    \bigl(
        |\boldsymbol A_n(t)|^2
        +
        |\boldsymbol A_p(t)|^2
        +
        |\boldsymbol B(t)|^2
    \bigr)\,dx .
\]
The energy inequality controls \(\mathcal D_{\rm fe}\), but it is not known
whether this quantity controls the lower-semicontinuous extension of the
smooth entropy production needed in the sublevel
entropy-entropy production inequality.

More precisely, for \(E_*>E(\bc^\infty)\), define
\[
    \mathcal D_{E_*}^{\,*}(\bv)
    :=
    \inf
    \left\{
        \liminf_{j\to\infty}\mathcal D(\bv^j)
        :
        \begin{array}{l}
        \bv^j \text{ is admissible in the sense of }
        \textup{Subsection}~\ref{subsec:sublevel-eep},\\
        \bv^j\to\bv \text{ strongly in }L^2(\Omega)^2,\quad
        E(\bv^j)\le E_*
        \end{array}
    \right\}.
\]
By Theorem~\ref{thm:sublevel-eep},
\[
    \mathcal D_{E_*}^{\,*}(\bv)
    \ge
    \lambda(E_*)\mathcal H(\bv|\bc^\infty)
\]
whenever the right-hand side is finite. Therefore, if a finite-energy weak
solution satisfies the no-defect condition
\[
    \mathcal D_{\rm fe}(t)
    \ge
    \mathcal D_{E_*}^{\,*}(\bc(t))
    \qquad\text{for a.e. }t>0,
\]
together with the strong energy inequality
\[
    \mathcal H(\bc(t)|\bc^\infty)
    +
    \int_s^t \mathcal D_{\rm fe}(r)\,dr
    \le
    \mathcal H(\bc(s)|\bc^\infty)
\]
for a.e. \(0<s<t\), then the same entropy-entropy production argument gives
\[
    \mathcal H(\bc(t)|\bc^\infty)
    \le
    e^{-\lambda(E_*)(t-s)}
    \mathcal H(\bc(s)|\bc^\infty)
    \qquad\text{for a.e. }0<s<t .
\]
Thus, the remaining obstruction for a general finite-energy weak solution
is a possible gap between \(\mathcal D_{\rm fe}\) and the
lower-semicontinuous entropy production
\(\mathcal D_{E_*}^{\,*}\), rather than an ambiguity in the constitutive
identification.
\end{remark}

\subsection{Sharp linearized rate}
\label{subsec:linearized-rate}

We finally identify the sharp entropy-production constant in the small-sublevel
limit. This gives the precise linearized rate behind the sublevel
entropy-entropy production inequality proved above.

For \(E_0>E(\bc^\infty)\), define
\begin{equation}
\Lambda(E_0)
:=
\inf
\left\{
\frac{\mathcal D(\bc)}
     {\mathcal H(\bc|\bc^\infty)}
: \begin{array}{l}
\bc \text{ is admissible in the sense of Section}
~\ref{subsec:sublevel-eep},
E(\bc)\le E_0,\;\bc\ne\bc^\infty
\end{array}
\right\}.
\label{eq:optimal-sublevel-constant}
\end{equation}
By Theorem~\ref{thm:sublevel-eep}, \(\Lambda(E_0)>0\) for every
\(E_0>E(\bc^\infty)\). Moreover, \(E_0\mapsto\Lambda(E_0)\) is nonincreasing
since the admissible class enlarges as \(E_0\) increases.  Hence, the one-sided limit
\(
    \lim_{E_0\downarrow E(\bc^\infty)}\Lambda(E_0)
\)
exists, possibly as \(+\infty\). We show that this limit is in fact finite and is 
given by an explicit linearized Rayleigh quotient.

Recall that
\(
    H_0:=m^{-1}I+F,
    \;
    M_0:=M(\bc^\infty),
    \;
    \bz=(-1,1)^\top .
\)
For a zero-mean perturbation
\(
    \bq=(q_n,q_p)^\top,
    \;
    \int_\Omega q_n\,dx
    =
    \int_\Omega q_p\,dx
    =
    0,
\)
let \(\psi\) be the normalized Neumann solution of
\(
    -\Delta\psi=\bz\cdot\bq,
    \;
    \partial_\nu\psi=0,
    \;
    \int_\Omega\psi\,dx=0.
\)
Set
\(
    \boldsymbol\Theta:=H_0\bq+\bz\psi .
\)
The linearized relative entropy and the linearized entropy production are
defined by
\begin{equation}
    \mathcal H_{\rm lin}(\bq)
    :=
    \frac12
    \int_\Omega
    \bq^\top H_0\bq\,dx
    +
    \frac12
    \int_\Omega
    |\nabla\psi|^2\,dx ,
\label{eq:linearized-entropy}
\end{equation}
and
\begin{equation}
    \mathcal D_{\rm lin}(\bq)
    :=
    \int_\Omega
    \nabla\boldsymbol\Theta^\top
    M_0
    \nabla\boldsymbol\Theta\,dx .
\label{eq:linearized-dissipation}
\end{equation}
We then define
\begin{equation}
    \lambda_{\rm lin}
    :=
    \inf_{\substack{\bq\ne0\\
    \int_\Omega q_n\,dx=\int_\Omega q_p\,dx=0}}
    \frac{\mathcal D_{\rm lin}(\bq)}
         {\mathcal H_{\rm lin}(\bq)} .
\label{eq:lambda-lin-definition}
\end{equation}

\begin{lem}[Spectral formula for the linearized rate]
\label{lem:linearized-spectral-formula}
Let \(0=\nu_0<\nu_1\le\nu_2\le\cdots\) be the Neumann eigenvalues
of \(-\Delta\), repeated with multiplicity, and let
\[
    A_k:=H_0+\nu_k^{-1}\bz\otimes\bz,
    \qquad k\ge1.
\]
Then
\begin{equation}
    \lambda_{\rm lin}
    =
    2\inf_{k\ge1}
   \Big ( \nu_k\,
    \lambda_{\min}\!\left(M_0A_k\right) \Big ),
\label{eq:lambda-lin-spectral-formula}
\end{equation}
where \(\lambda_{\min}(M_0A_k)\) denotes the smallest eigenvalue of the
symmetric positive definite matrix
\(
    A_k^{1/2}M_0A_k^{1/2}.
\)
\end{lem}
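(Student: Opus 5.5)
We diagonalize both quadratic forms in the Neumann eigenbasis and reduce the Rayleigh quotient to a family of $2\times2$ generalized eigenvalue problems, one for each Neumann mode.

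\textbf{Step 1: mode-by-mode decomposition.} Let $\{e_k\}_{k\ge0}$ be an $L^2$-orthonormal basis of Neumann eigenfunctions, with $-\Delta e_k=\nu_k e_k$ and $e_0$ constant. A zero-mean perturbation expands as $\bq=\sum_{k\ge1}\hat\bq_k e_k$ with $\hat\bq_k\in\mathbb R^2$. Since $-\Delta\psi=\bz\cdot\bq$ and $\psi$ has zero mean, we get $\psi=\sum_{k\ge1}\nu_k^{-1}(\bz\cdot\hat\bq_k)e_k$. Therefore
\[
\boldsymbol\Theta=\sum_{k\ge1}\bigl(H_0+\nu_k^{-1}\bz\otimes\bz\bigr)\hat\bq_k\,e_k=\sum_{k\ge1}A_k\hat\bq_k\,e_k .
\]
The gradients $\nabla e_k$ are $L^2$-orthogonal with $\|\nabla e_k\|^2=\nu_k$. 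Because $M_0$ is a constant matrix, this gives
\[
\mathcal D_{\rm lin}(\bq)=\sum_{k\ge1}\nu_k\,(A_k\hat\bq_k)^\top M_0(A_k\hat\bq_k).
\]
Similarly, $\int|\nabla\psi|^2=\sum_k\nu_k^{-1}(\bz\cdot\hat\bq_k)^2$, so
\[
\mathcal H_{\rm lin}(\bq)=\tfrac12\sum_{k\ge1}\hat\bq_k^\top A_k\hat\bq_k .
\]
To justify these manipulations, we first note that $\mathcal D_{\rm lin}$ is finite only when $\boldsymbol\Theta\in H^1$. Where it is infinite the quotient is $+\infty$, so we may restrict to $\bq$ with finite $\mathcal D_{\rm lin}$ and use Parseval in $H^1$ for $\boldsymbol\Theta$. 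We also need the relation $\hat{\boldsymbol\Theta}_k=A_k\hat\bq_k$, which holds because $A_k$ is invertible.

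\textbf{Step 2: single-mode quotient.} Set $\bxi_k=A_k^{1/2}\hat\bq_k$. Then the $k$-th term of the ratio becomes
\[
\frac{\nu_k\,\bxi_k^\top A_k^{1/2}M_0A_k^{1/2}\bxi_k}{\tfrac12|\bxi_k|^2}.
\]
Its infimum over $\bxi_k\ne0$ equals $2\nu_k\lambda_{\min}(A_k^{1/2}M_0A_k^{1/2})$. This uses that $A_k$ is symmetric positive definite, which holds since $H_0>0$, and that $M_0$ is positive definite at $\bc^\infty=(m,m)$: indeed $\det M_0>0$ by the explicit formula \eqref{eq:intro-mobility}.

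\textbf{Step 3: combining the modes.} A ratio of sums of nonnegative terms is bounded below by the infimum of the termwise ratios. Hence $\mathcal D_{\rm lin}/\mathcal H_{\rm lin}\ge 2\inf_k\nu_k\lambda_{\min}(M_0A_k)$. Conversely, choosing $\bq=A_k^{-1/2}\bxi\,e_k$ with $\bxi$ a minimizing eigenvector attains each single-mode value. This gives equality with the infimum, so formula \eqref{eq:lambda-lin-spectral-formula} holds.

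I would also check that the infimum is positive and finite; this is needed later but not for the identity itself. As $\nu_k\to\infty$ we have $A_k\to H_0$, so $\nu_k\lambda_{\min}(M_0A_k)\to\infty$, and the infimum is therefore attained at some finite $k$.

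The main obstacle is the functional-analytic justification in Step 1, namely the domain of $\mathcal D_{\rm lin}$ for general $L^2$ perturbations $\bq$. I would handle it by first proving the identity for finite mode truncations. I would then pass to the general case using the lower-semicontinuity and monotone-convergence properties of the mode sums, noting that the termwise lower bound applies to every partial sum.
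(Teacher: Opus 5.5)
Your proposal is correct and follows the paper's proof essentially step for step. You expand in the Neumann eigenbasis to get $\mathcal H_{\rm lin}=\frac12\sum_k\hat{\bq}_k^\top A_k\hat{\bq}_k$ and $\mathcal D_{\rm lin}=\sum_k\nu_k\hat{\bq}_k^\top A_kM_0A_k\hat{\bq}_k$, bound each modal Rayleigh quotient below by $2\nu_k\lambda_{\min}(A_k^{1/2}M_0A_k^{1/2})$, and attain the infimum with single-mode test functions.

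Your extra checks are sound additions that the paper leaves implicit: positive definiteness of $M_0$, the restriction to $\boldsymbol\Theta\in H^1$, and attainment of the infimum at finite $k$. One small correction: $\hat{\boldsymbol\Theta}_k=A_k\hat{\bq}_k$ follows from linearity of the coefficient map, not from invertibility of $A_k$.
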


\begin{proof}
Let \(\{e_k\}_{k\ge0}\) be an orthonormal Neumann eigenbasis, with \(e_0\) as a
constant. Since \(\bq\) has componentwise zero mean, it has the expansion
\(
    \bq=\sum_{k\ge1}\bq_k e_k,
    \;
    \bq_k\in\mathbb R^2.
\)
The associated potential satisfies
\(
    \psi
    =
    \sum_{k\ge1}
    \frac{\bz\cdot\bq_k}{\nu_k}e_k .
\)
Therefore
\(
    \boldsymbol\Theta
    =
    \sum_{k\ge1} A_k\bq_k e_k .
\)
Using orthogonality of the Neumann eigenfunctions,
\[
    \mathcal H_{\rm lin}(\bq)
    =
    \frac12
    \sum_{k\ge1}
    \bq_k^\top A_k\bq_k ,
\qquad
    \mathcal D_{\rm lin}(\bq)
    =
    \sum_{k\ge1}
    \nu_k\,
    \bq_k^\top A_kM_0A_k\bq_k .
\]
For each \(k\),
\[
    \frac{
    \bq_k^\top A_kM_0A_k\bq_k
    }{
    \bq_k^\top A_k\bq_k
    }
    \ge
    \lambda_{\min}(M_0A_k),
\]
where the eigenvalue is characterized by the symmetric representative
\(A_k^{1/2}M_0A_k^{1/2}\). This gives the lower bound in
\eqref{eq:lambda-lin-spectral-formula}. The reverse inequality follows by
testing the Rayleigh quotient with a single Neumann mode \(e_k\) and a vector
\(\xi\in\mathbb R^2 \setminus \{0\}\), and then optimizing over \(k\) and
\(\xi\).
\end{proof}

The next lemma is the lower-semicontinuity step needed in the sharp
small-sublevel limit. It is the only step where the nonlinear entropy
production is compared directly with its linearization.

\begin{lem}[Linearized lower semicontinuity of the entropy production]
\label{lem:linearized-dissipation-liminf}
Let \(\{\bc^j\}_{j\ge1}\) be admissible states in the sense of
Subsection~\ref{subsec:sublevel-eep} such that
\[
    E(\bc^j)\downarrow E(\bc^\infty),
    \qquad
    \bc^j\ne\bc^\infty.
\]
Set
\(
    \varepsilon_j^2
    :=
    \mathcal H(\bc^j|\bc^\infty),
    \;
    \bq^j
    :=
    \frac{\bc^j-\bc^\infty}{\varepsilon_j},
    \;
    \psi^j
    :=
    \frac{\phi^j}{\varepsilon_j},
\)
where \(\phi^j=\phi[\bc^j]\). Assume that
\begin{equation}
    \sup_j
    \frac{\mathcal D(\bc^j)}{\varepsilon_j^2}
    <\infty .
\label{eq:normalized-dissipation-bound}
\end{equation}
Then, up to a subsequence,
\[
    \bq^j\to\bq
    \quad\text{strongly in }L^2(\Omega)^2,
    \qquad
    \psi^j\to\psi
    \quad\text{strongly in }H^2(\Omega),
\]
where
\(
    -\Delta\psi=\bz\cdot\bq,
    \;
    \partial_\nu\psi=0,
    \;
    \int_\Omega\psi\,dx=0.
\)
Moreover,
\begin{equation}
    \mathcal H_{\rm lin}(\bq)=1,
\label{eq:linearized-entropy-normalization}
\end{equation}
and
\begin{equation}
    \liminf_{j\to\infty}
    \frac{\mathcal D(\bc^j)}{\varepsilon_j^2}
    \ge
    \mathcal D_{\rm lin}(\bq).
\label{eq:linearized-dissipation-liminf}
\end{equation}
\end{lem}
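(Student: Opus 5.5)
The plan follows the blow-up argument of Lemma~\ref{lem:local-eep}, but now retains quantitative information instead of arguing by contradiction. First, since $E(\bc^j)\downarrow E(\bc^\infty)$, we have $\varepsilon_j\to0$. By \eqref{eq:relative-entropy-controls-L2}, $\bq^j$ is bounded in $L^2(\Omega)^2$. By (A3), $\psi^j$ is bounded in $H^2(\Omega)$ with $\|\nabla\phi^j\|_{L^6}\le C\varepsilon_j$.

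For compactness we use the estimate from Lemma~\ref{lem:global-entropy-production} applied to a single state, in its unintegrated form
\[
\int_\Omega\frac{|\nabla c_n^j|^2+|\nabla c_p^j|^2}{1+c_n^j+c_p^j}\,dx\le C\mathcal D(\bc^j)+C\int_\Omega(1+c_n^j+c_p^j)|\nabla\phi^j|^2\,dx\le C\varepsilon_j^2 .
\]
This uses \eqref{eq:normalized-dissipation-bound}. Dividing by $\varepsilon_j^2$ and applying H\"older as in Lemma~\ref{lem:global-weighted-gradient} bounds $\bq^j$ in $W^{1,4/3}(\Omega)^2$. Since $d\le3$, this gives strong $L^2$ convergence $\bq^j\to\bq$ along a subsequence. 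Continuity of $\mathcal P$ into $H^2$ then gives $\psi^j\to\psi$ in $H^2$, with the limiting Neumann problem. Passing to a further subsequence, $\bc^j\to\bc^\infty$ a.e. The normalization $\mathcal H_{\rm lin}(\bq)=1$ follows exactly as at the end of Lemma~\ref{lem:local-eep}: split the Taylor expansion of $s\ln(s/m)-s+m$ on $\{|\varepsilon_jq_i^j|\le m/2\}$ from a uniformly integrable remainder, and note that the steric and electrostatic parts converge by strong convergence. Summing the pieces, $\frac1{2m}|\bq|^2+\frac12\bq^\top F\bq=\frac12\bq^\top H_0\bq$.

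For the liminf \eqref{eq:linearized-dissipation-liminf}, we write the dissipation in square-root form. Let $\bmu^j$ be the potentials, so $\nabla\bmu^j=H(\bc^j)\nabla\bc^j+\bz\nabla\phi^j$, and set
\[
\boldsymbol G^j:=\varepsilon_j^{-1}M(\bc^j)^{1/2}\nabla\bmu^j,
\qquad \|\boldsymbol G^j\|_{L^2}^2=\mathcal D(\bc^j)/\varepsilon_j^2\le C .
\]
Extract a weak $L^2$ limit $\boldsymbol G$; weak lower semicontinuity gives $\liminf\mathcal D(\bc^j)/\varepsilon_j^2\ge\|\boldsymbol G\|_{L^2}^2$. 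It then suffices to identify $\boldsymbol G=M_0^{1/2}\nabla\boldsymbol\Theta$.

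To avoid the singular factor $H(\bc^j)$ near vacuum, we work, as in Lemma~\ref{lem:local-eep}, with
\[
\boldsymbol g^j/\varepsilon_j=(I+C^jF)\nabla\bq^j+C^j\bz\nabla\psi^j .
\]
This converges weakly in $L^1$, in the sense of distributions, to $(I+mF)\nabla\bq+m\bz\nabla\psi=mH_0\nabla\bq+m\bz\nabla\psi=m\nabla\boldsymbol\Theta$. The justification is that $\nabla\bq^j\rightharpoonup\nabla\bq$ weakly in $L^{4/3}$, the coefficients $C^j$ converge a.e., and the products $C^j\nabla\bq^j$ are handled through the bounded inverted coefficients, as in that lemma. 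On the other hand, $\boldsymbol g^j=C^j\nabla\bmu^j=C^jM(\bc^j)^{-1/2}\,M(\bc^j)^{1/2}\nabla\bmu^j$, so
\[
\boldsymbol g^j/\varepsilon_j=K(\bc^j)\boldsymbol G^j,\qquad K(\bc):=C(\bc)M(\bc)^{-1/2}.
\]
One checks from the explicit inverse of $M(\bc)$ that $K(\bc)K(\bc)^\top=C M^{-1}C$ has entries bounded by $C(1+c_n+c_p)$. Hence $K(\bc^j)$ is bounded in $L^4$ and converges a.e., and therefore strongly in $L^{4-}$ by uniform integrability from the $L^2$ bound on $\bc^j$, to $K(\bc^\infty)=mM_0^{-1/2}$. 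Strong--weak convergence then gives $\boldsymbol g^j/\varepsilon_j\rightharpoonup mM_0^{-1/2}\boldsymbol G$ in $L^{4/3-}$. Equating the two limits yields $\boldsymbol G=M_0^{1/2}\nabla\boldsymbol\Theta$, so $\|\boldsymbol G\|^2=\mathcal D_{\rm lin}(\bq)$.

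The main obstacle is this identification step. Two points need care: the strong convergence of the matrix coefficient $K(\bc^j)$, which grows like $\sqrt{1+c}$, has to be shown to be good enough to pair with the weakly convergent $\boldsymbol G^j$; and the product $C^j\nabla\bq^j$ must be shown to converge weakly. For the second, the first approach is to use $C^j\nabla\bq^j=m\nabla\bq^j+\varepsilon_j\,{\rm diag}(\bq^j)\nabla\bq^j$ and show that the last term vanishes in $L^1$. This uses $\varepsilon_j\|\bq^j\|_{L^4}\|\nabla\bq^j\|_{L^{4/3}}$, and would require improved integrability. If that fails, we fall back on the inverted-coefficient argument of Lemma~\ref{lem:local-eep}.
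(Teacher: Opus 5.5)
Your compactness and normalization steps coincide with the paper's. For the liminf you take a different route. You pass the global square-root field $\boldsymbol G^j=\varepsilon_j^{-1}M(\bc^j)^{1/2}\nabla\bmu^j$ to a weak $L^2$ limit and then identify that limit. The paper instead truncates to the good sets $G_j=\{|c_n^j-m|\le\alpha_j,\ |c_p^j-m|\le\alpha_j\}$, with $\alpha_j\downarrow0$ and $\|\bc^j-\bc^\infty\|_{L^2}/\alpha_j\to0$. On these sets $M(\bc^j)\to M_0$ and $H(\bc^j)\to H_0$ uniformly. Hence $\mathbf 1_{G_j}\nabla\boldsymbol\Theta^j=\mathbf 1_{G_j}(H(\bc^j)\nabla\bq^j+\bz\nabla\psi^j)$ is identified using only $\nabla\bq^j\rightharpoonup\nabla\bq$ in $L^{4/3}$ and $|\Omega\setminus G_j|\to0$, and lower semicontinuity is applied to the frozen form $M_0$. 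Discarding $\Omega\setminus G_j$ costs nothing in a liminf, and no unbounded coefficients or products of weakly convergent sequences ever appear. Your global weak-lsc argument avoids the auxiliary scale $\alpha_j$, but pays for it in the identification step, which as written has two defects. Both are repairable.

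First, the claim that $CM^{-1}C$ has entries $O(1+c_n+c_p)$ is false. From the explicit inverse,
\[
CM^{-1}C=\mathrm{diag}(c_n/a,\,c_p/b)+\delta^{-1}c_nc_p\,\bz\otimes\bz ,
\]
so $|K(\bc)|$ grows linearly, not like $\sqrt{1+|\bc|}$, when $c_n$ and $c_p$ are both large. Thus $K(\bc^j)$ is only bounded in $L^2$, not $L^4$. This is harmless: $|K(\bc^j)|^2\le C(1+|\bc^j|^2)$ is uniformly integrable, so Vitali gives $K(\bc^j)\to mM_0^{-1/2}$ strongly in $L^2$ and $K(\bc^j)\boldsymbol G^j\rightharpoonup mM_0^{-1/2}\boldsymbol G$ weakly in $L^1$. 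That suffices for distributional identification.

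Second, and more seriously, the limit of $C^j\nabla\bq^j$ is left open. The inverted-coefficient argument of Lemma~\ref{lem:local-eep} does not transfer verbatim: there $(I+C^kF)^{-1}\boldsymbol g^k/\varepsilon_k\to0$ in $L^1$, whereas here $\boldsymbol g^j/\varepsilon_j$ does not vanish. The clean fix is to invert before taking limits, that is, apply $H(\bc^j)^{-1}=(I+C^jF)^{-1}C^j$ to $\nabla\bmu^j=H(\bc^j)\nabla\bc^j+\bz\nabla\phi^j$:
\[
\nabla\bq^j+H(\bc^j)^{-1}\bz\nabla\psi^j=N(\bc^j)\boldsymbol G^j,\qquad N:=H^{-1}M^{-1/2}.
\]
Here $H^{-1}\le F^{-1}$ is bounded. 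Moreover $NN^\top=H^{-1}M^{-1}H^{-1}\le C(1+c_n+c_p)I$, which is exactly the matrix bound in the proof of Lemma~\ref{lem:global-entropy-production}. Hence $N(\bc^j)\to H_0^{-1}M_0^{-1/2}$ strongly in $L^2$, and passing to the limit gives $\nabla\bq+H_0^{-1}\bz\nabla\psi=H_0^{-1}M_0^{-1/2}\boldsymbol G$, i.e.\ $\boldsymbol G=M_0^{1/2}\nabla\boldsymbol\Theta$.

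Alternatively, your first approach can be rescued by a bootstrap. Since $\|\nabla\sqrt{1+c_n^j+c_p^j}\|_{L^2}\le C\varepsilon_j$, the sequence $\bc^j$ is bounded in $L^3$. Then H\"older bounds $\bq^j$ in $W^{1,3/2}\hookrightarrow L^3$, and $\varepsilon_j\|\bq^j\|_{L^3}\|\nabla\bq^j\|_{L^{3/2}}\to0$.

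With either repair your proof is complete. It yields the weak limit of the normalized square-root fluxes directly, which fits the square-root framework of Definition~\ref{def:finite-energy-weak-solution}. The price is the structural estimate on $H^{-1}M^{-1}H^{-1}$ and a uniform-integrability argument; the paper's truncation is the more elementary route.
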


\begin{proof}
The entropy coercivity in \eqref{eq:relative-entropy-controls-L2} gives
\(
    \|\bq^j\|_{L^2(\Omega)}\le C .
\)
Moreover, the weighted gradient estimate used in
Lemma~\ref{lem:local-eep}, applied to the normalized sequence, gives
\[
    \int_\Omega
    \frac{|\nabla\bq^j|^2}{1+c_n^j+c_p^j}\,dx
    \le C .
\]
Since \(\bc^j\) is bounded in \(L^2(\Omega)^2\), H\"older's inequality yields
\[
    \|\bq^j\|_{W^{1,4/3}(\Omega)}\le C .
\]
Since \(W^{1,4/3}(\Omega)\hookrightarrow L^2(\Omega)\) compactly for
\(d\le3\), we may assume, after passing to a subsequence, that
\[
    \bq^j\to\bq
    \quad\text{strongly in }L^2(\Omega)^2 .
\]
The convergence of \(\psi^j\) in \(H^2(\Omega)\) follows from the normalized
Poisson equations
\(
    -\Delta\psi^j=\bz\cdot\bq^j,
    \;
    \partial_\nu\psi^j=0,
    \;
    \int_\Omega\psi^j\,dx=0,
\)
and the Neumann elliptic estimate.

The identity \eqref{eq:linearized-entropy-normalization} follows from the
quadratic expansion of the relative entropy at \(\bc^\infty\). Indeed, by the
same expansion used in the proof of Lemma~\ref{lem:local-eep},
\[
    \frac{\mathcal H(\bc^j|\bc^\infty)}{\varepsilon_j^2}
    \to
    \mathcal H_{\rm lin}(\bq).
\]
The left-hand side is identically one, so
\(\mathcal H_{\rm lin}(\bq)=1\).

It remains to prove \eqref{eq:linearized-dissipation-liminf}. Let
\(
    \boldsymbol\Theta^j
    :=
    \frac{\bmu^j-\bmu^\infty}{\varepsilon_j},
\)
where \(\bmu^j\) is the electrochemical potential associated with \(\bc^j\)
and \(\bmu^\infty\) is the constant equilibrium chemical potential. Since
\(\nabla\bmu^\infty=0\),
\[
    \frac{\mathcal D(\bc^j)}{\varepsilon_j^2}
    =
    \int_\Omega
    \nabla\boldsymbol\Theta^j{}^\top
    M(\bc^j)
    \nabla\boldsymbol\Theta^j\,dx .
\label{eq:normalized-dissipation-theta}
\]

Choose numbers \(\alpha_j\downarrow0\) such that $\frac{\|\bc^j-\bc^\infty\|_{L^2(\Omega)}}{\alpha_j}
    \to0,$ $\alpha_j<m/2,$ and define the  sets
\(
    G_j
    :=
    \left\{
    x\in\Omega:
    |c_n^j(x)-m|\le\alpha_j,\,
    |c_p^j(x)-m|\le\alpha_j
    \right\}.
\)
Then \(|\Omega\setminus G_j|\to0\). On \(G_j\), the concentrations are
uniformly bounded away from zero and
\[
    M(\bc^j)\to M_0,
    \qquad
    H(\bc^j):=\operatorname{diag}\bigl((c_n^j)^{-1},(c_p^j)^{-1}\bigr)+F
    \to H_0
\]
uniformly. Set
\(
    \boldsymbol W^j
    :=
    \mathbf 1_{G_j}\nabla\boldsymbol\Theta^j,
\)
where \(\mathbf 1_{G_j}\) is the characteristic function of the set \(G_j\). By \eqref{eq:normalized-dissipation-bound} and the uniform positivity of
\(M(\bc^j)\) on \(G_j\), the sequence \(\boldsymbol W^j\) is bounded in
\(L^2(\Omega;\mathbb R^{2\times d})\). Hence, up to a subsequence,
\[
    \boldsymbol W^j\rightharpoonup \boldsymbol W
    \quad\text{weakly in }L^2(\Omega;\mathbb R^{2\times d}) .
\]

We identify the weak limit \(\boldsymbol W\). Since
\(
    \nabla\boldsymbol\Theta^j
    =
    H(\bc^j)\nabla\bq^j+\bz\nabla\psi^j 
\)
on \(G_j\), while \(H(\bc^j)\to H_0\) uniformly on \(G_j\),
\(\bq^j\rightharpoonup\bq\) in \(W^{1,4/3}(\Omega)^2\), and
\(\psi^j\to\psi\) strongly in \(H^2(\Omega)\), it follows that, for every
smooth test field \(\boldsymbol\Xi\),
\[
    \int_\Omega
    \boldsymbol W^j:\boldsymbol\Xi\,dx
    \to
    \int_\Omega
    \nabla(H_0\bq+\bz\psi):\boldsymbol\Xi\,dx .
\]
Therefore
\(
    \boldsymbol W
    =
    \nabla(H_0\bq+\bz\psi)
    =
    \nabla\boldsymbol\Theta .
\)

Using the nonnegativity of the entropy-production density and restricting to
\(G_j\), we obtain from \eqref{eq:normalized-dissipation-theta} that
\[
\begin{aligned}
\liminf_{j\to\infty}
\frac{\mathcal D(\bc^j)}{\varepsilon_j^2}
\ge
\liminf_{j\to\infty}
\int_{G_j}
\nabla\boldsymbol\Theta^j{}^\top
M(\bc^j)
\nabla\boldsymbol\Theta^j\,dx
=
\liminf_{j\to\infty}
\int_\Omega
\boldsymbol W^j{}^\top
M_0
\boldsymbol W^j\,dx .
\end{aligned}
\]
In the last equality we used the uniform convergence
\(M(\bc^j)\to M_0\) on \(G_j\) and the \(L^2\)-boundedness of
\(\boldsymbol W^j\). The weak lower semicontinuity of the positive quadratic
form associated with \(M_0\) gives
\[
\liminf_{j\to\infty}
\int_\Omega
\boldsymbol W^j{}^\top
M_0
\boldsymbol W^j\,dx
\ge
\int_\Omega
\nabla\boldsymbol\Theta^\top
M_0
\nabla\boldsymbol\Theta\,dx
=
\mathcal D_{\rm lin}(\bq).
\]
This proves \eqref{eq:linearized-dissipation-liminf}.
\end{proof}

\begin{thm}[Sharp small-sublevel limit]
\label{thm:sharp-sublevel-limit}
Under the assumptions of Theorem~\ref{thm:sublevel-eep},
\begin{equation}
    \lim_{E_0\downarrow E(\bc^\infty)}
    \Lambda(E_0)
    =
    \lambda_{\rm lin}.
\label{eq:sharp-sublevel-limit}
\end{equation}
\end{thm}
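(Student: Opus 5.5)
We prove the limit as two matching inequalities: a lower bound from the compactness-and-linearization lemma just proved, and an upper bound from explicit trial states near equilibrium. Since $\Lambda$ is nonincreasing, $L:=\lim_{E_0\downarrow E(\bc^\infty)}\Lambda(E_0)$ exists in $(0,\infty]$.

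\textbf{Lower bound $L\ge\lambda_{\rm lin}$.} If $L=\infty$ there is nothing to show for this direction, so assume $L<\infty$. For each $j$ we pick an admissible $\bc^j\ne\bc^\infty$ with $E(\bc^j)\le E(\bc^\infty)+1/j$ and
\[
\frac{\mathcal D(\bc^j)}{\mathcal H(\bc^j|\bc^\infty)}\le\Lambda\bigl(E(\bc^\infty)+1/j\bigr)+\frac1j .
\]
Then $E(\bc^j)\downarrow E(\bc^\infty)$ after passing to a monotone subsequence; monotonicity is harmless since only $E(\bc^j)\to E(\bc^\infty)$ is used. The quotients $\mathcal D(\bc^j)/\varepsilon_j^2$ stay bounded, which is exactly \eqref{eq:normalized-dissipation-bound}. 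Lemma~\ref{lem:linearized-dissipation-liminf} then yields a limit $\bq$ with $\mathcal H_{\rm lin}(\bq)=1$; in particular $\bq\ne0$ and $\bq$ has zero mean. It also gives
\[
L\ge\liminf_{j\to\infty}\frac{\mathcal D(\bc^j)}{\varepsilon_j^2}\ge\mathcal D_{\rm lin}(\bq)\ge\lambda_{\rm lin}\,\mathcal H_{\rm lin}(\bq)=\lambda_{\rm lin}.
\]

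\textbf{Upper bound $L\le\lambda_{\rm lin}$.} This also shows $L<\infty$. Fix $\eta>0$. By Lemma~\ref{lem:linearized-spectral-formula}, choose $k\ge1$ and $\xi\in\mathbb R^2\setminus\{0\}$ such that the single-mode perturbation $\bq=\xi e_k$ satisfies
\[
\frac{\mathcal D_{\rm lin}(\bq)}{\mathcal H_{\rm lin}(\bq)}\le\lambda_{\rm lin}+\eta .
\]
Neumann eigenfunctions are smooth up to the boundary, or at least $W^{1,\infty}$ under (A1) via elliptic regularity; otherwise we approximate $e_k$ by smooth zero-mean functions, using continuity of both quadratic forms in $H^1$. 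Define $\bc^s:=\bc^\infty+s\bq$ for small $s>0$. These states are positive, lie in $W^{1,\infty}$, preserve both masses, and satisfy $\phi[\bc^s]=s\psi$. Since $\bc^s\to\bc^\infty$ uniformly, Taylor expansion gives
\[
\mathcal H(\bc^s|\bc^\infty)=s^2\mathcal H_{\rm lin}(\bq)+o(s^2).
\]
For the dissipation, $\nabla\bmu^s=H(\bc^s)\,s\nabla\bq+\bz\,s\nabla\psi=s\nabla\boldsymbol\Theta+o(s)$ uniformly, and $M(\bc^s)\to M_0$ uniformly, so
\[
\mathcal D(\bc^s)=s^2\mathcal D_{\rm lin}(\bq)+o(s^2).
\]
Given any $E_0>E(\bc^\infty)$, we have $E(\bc^s)\le E_0$ for $s$ small, hence
\[
\Lambda(E_0)\le\frac{\mathcal D(\bc^s)}{\mathcal H(\bc^s|\bc^\infty)}\to\frac{\mathcal D_{\rm lin}(\bq)}{\mathcal H_{\rm lin}(\bq)}\le\lambda_{\rm lin}+\eta\quad(s\to0).
\]
Letting $\eta\to0$ gives $L\le\lambda_{\rm lin}$.

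\textbf{Main obstacle.} The delicate direction is the lower bound, because near-optimal sequences may have no pointwise smallness, vacuum spots, or large values. All of that is handled by the good-set argument in Lemma~\ref{lem:linearized-dissipation-liminf}, so here the remaining care is only in extracting the minimizing sequence and checking \eqref{eq:normalized-dissipation-bound}, which follows from $L<\infty$. The upper bound is a routine second-order Taylor expansion with uniform control, provided the trial mode has enough regularity to be admissible.
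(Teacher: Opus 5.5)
Your proposal is correct and follows the paper's proof essentially step for step. For the lower bound you apply Lemma~\ref{lem:linearized-dissipation-liminf} to a sequence with bounded normalized dissipation, taking a near-minimizing sequence directly where the paper argues by contradiction; the upper bound uses the same single-mode trial states $\bc^\infty+s\,\xi e_k$ with second-order Taylor expansion, and your remark on the $W^{1,\infty}$ regularity of $e_k$ fills in the admissibility check that the paper only asserts.
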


\begin{proof}
We first prove the lower bound. Suppose that there exist \(\eta>0\), energy
levels \(E_j\downarrow E(\bc^\infty)\), and admissible states \(\bc^j\) such
that
\(
    E(\bc^j)\le E_j,
    \;
    \bc^j\ne\bc^\infty,
\)
and
\(
    \frac{\mathcal D(\bc^j)}
         {\mathcal H(\bc^j|\bc^\infty)}
    \le
    \lambda_{\rm lin}-\eta .
\)
Since \(E(\bc^j)\le E_j\downarrow E(\bc^\infty)\), we have
\(
    \varepsilon_j^2
    :=
    \mathcal H(\bc^j|\bc^\infty)
    \to0 .
\)
Moreover,
\(
    \frac{\mathcal D(\bc^j)}{\varepsilon_j^2}
    \le
    \lambda_{\rm lin}-\eta ,
\)
so Lemma~\ref{lem:linearized-dissipation-liminf} applies. Passing to a
subsequence, we obtain a nonzero perturbation \(\bq\) satisfying
\[
    \mathcal H_{\rm lin}(\bq)=1
\qquad  \text{ and } \qquad 
    \liminf_{j\to\infty}
    \frac{\mathcal D(\bc^j)}{\varepsilon_j^2}
    \ge
    \mathcal D_{\rm lin}(\bq).
\]
Therefore
\[
\begin{aligned}
\liminf_{j\to\infty}
\frac{\mathcal D(\bc^j)}
     {\mathcal H(\bc^j|\bc^\infty)}
=
\liminf_{j\to\infty}
\frac{\mathcal D(\bc^j)}{\varepsilon_j^2}
\ge
\mathcal D_{\rm lin}(\bq)
=
\frac{\mathcal D_{\rm lin}(\bq)}
     {\mathcal H_{\rm lin}(\bq)}
\ge
\lambda_{\rm lin},
\end{aligned}
\]
which contradicts the choice of \(\bc^j\). Hence
\[
    \liminf_{E_0\downarrow E(\bc^\infty)}
    \Lambda(E_0)
    \ge
    \lambda_{\rm lin}.
\]

We now prove the upper bound. Fix \(\eta>0\). By
Lemma~\ref{lem:linearized-spectral-formula}, there exist \(k\ge1\) and
\(\xi\in\mathbb R^2\setminus\{0\}\) such that
\[
    \frac{\mathcal D_{\rm lin}(\xi e_k)}
         {\mathcal H_{\rm lin}(\xi e_k)}
    \le
    \lambda_{\rm lin}+\eta .
\]
The mode \(k\) may be chosen finite because
\(\nu_k\to\infty\) and the matrices \(A_k\) are uniformly bounded from
below by \(H_0\), so the quantities in
\eqref{eq:lambda-lin-spectral-formula} diverge as \(k\to\infty\).

For sufficiently small \(\varepsilon>0\), define
\(
    \bc^\varepsilon
    :=
    \bc^\infty+\varepsilon \xi e_k .
\)
Since \(e_k\) has zero mean, \(\bc^\varepsilon\) satisfies the mass
constraints. For \(\varepsilon\) small enough, \(\bc^\varepsilon\) is admissible.
Let \(\phi^\varepsilon=\phi[\bc^\varepsilon]\). Then
\(
    \frac{\phi^\varepsilon}{\varepsilon}
    =
    \frac{\bz\cdot\xi}{\nu_k}e_k .
\)
By Taylor expansion of the entropy and of the entropy production at
\(\bc^\infty\),
\[
    \mathcal H(\bc^\varepsilon|\bc^\infty)
    =
    \varepsilon^2
    \mathcal H_{\rm lin}(\xi e_k)
    +
    o(\varepsilon^2),
\qquad
    \mathcal D(\bc^\varepsilon)
    =
    \varepsilon^2
    \mathcal D_{\rm lin}(\xi e_k)
    +
    o(\varepsilon^2).
\]
Thus
\[
    \frac{\mathcal D(\bc^\varepsilon)}
         {\mathcal H(\bc^\varepsilon|\bc^\infty)}
    \to
    \frac{\mathcal D_{\rm lin}(\xi e_k)}
         {\mathcal H_{\rm lin}(\xi e_k)}
    \le
    \lambda_{\rm lin}+\eta .
\]
Moreover,
\[
    E(\bc^\varepsilon)\downarrow E(\bc^\infty)
    \qquad
    \text{as }\varepsilon\downarrow0.
\]
Therefore, taking \(\bc = \bc^\varepsilon\) in
\eqref{eq:optimal-sublevel-constant}, we get
\[
    \limsup_{E_0\downarrow E(\bc^\infty)}
    \Lambda(E_0)
    \le
    \lambda_{\rm lin}+\eta .
\]
Letting \(\eta\downarrow0\) gives
\[
    \limsup_{E_0\downarrow E(\bc^\infty)}
    \Lambda(E_0)
    \le
    \lambda_{\rm lin}.
\]
Together with the lower bound, this proves
\eqref{eq:sharp-sublevel-limit}.
\end{proof}

\subsection{Strong stability near equilibrium}
\label{subsec:small-stability}

In this subsection, we study the long-time dynamics of smooth solutions as a consequence of Theorem~\ref{thm:sharp-sublevel-limit}. The constant
\(\lambda_{\rm lin}\), given in (\ref{eq:lambda-lin-spectral-formula}) and identified there as the sharp small-sublevel
entropy-production constant, also governs the local nonlinear relaxation of
strong solutions near the homogeneous equilibrium. Let \(-\Delta_N\) denote the Neumann Laplacian on the zero-mean subspace and,
for \(s\ge0\), define
\(    \|u\|_{H_N^s}
    :=
    \|(I-\Delta_N)^{s/2}u\|_{L^2(\Omega)} .
\)
For vector-valued functions, this norm is understood componentwise.

\begin{cor}[Nonlinear stability near equilibrium]
\label{cor:small-data-strong-stability}
Assume \textup{(A1)}--\textup{(A5)}, and assume in addition that
\(\partial\Omega\) is smooth. Let \(s>d/2+2\). For every
\(0<\gamma<\lambda_{\rm lin}\), there exist constants
\(\varepsilon_s>0\) and \(C_s>0\) such that, if
\(\bc_0-\bc^\infty\in H_N^s(\Omega)^2\) satisfies
\(
    \int_\Omega(c_{n,0}-m)\,dx
    =
    \int_\Omega(c_{p,0}-m)\,dx
    =
    0
\)
and
\(
    \|\bc_0-\bc^\infty\|_{H_N^s}
    \le
    \varepsilon_s,
\)
then the strong solution to (\ref{eq:intro-system-c})-(\ref{eq:intro-system-phi}) exists globally, remains strictly
positive, and satisfies
\begin{equation}
    \|\bc(t)-\bc^\infty\|_{H_N^s}^2
    +
    \|\phi(t)\|_{H_N^{s+2}}^2
    \le
    C_s e^{-\gamma t}
    \|\bc_0-\bc^\infty\|_{H_N^s}^2
    \qquad\text{for all }t\ge0 .
\label{eq:small-data-strong-decay}
\end{equation}
\end{cor}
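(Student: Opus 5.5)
The plan is to combine three ingredients: local well-posedness in $H_N^s$, a continuation argument, and an energy method in which the linearized operator's spectral gap is the decisive input. That gap is quantified by $\lambda_{\rm lin}$ through Lemma~\ref{lem:linearized-spectral-formula}.

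\emph{Perturbation system and local existence.} Write $\bc=\bc^\infty+\bq$ and use $\nabla\bmu=H(\bc)\nabla\bc+\bz\nabla\phi$. The system then becomes
\[
\partial_t\bq=\nabla\cdot\bigl(M_0\nabla(H_0\bq+\bz\psi)\bigr)+\nabla\cdot\mathcal N(\bq),\qquad -\Delta\psi=\bz\cdot\bq,
\]
where $\psi=\phi$ and the remainder $\mathcal N(\bq)=(M(\bc)H(\bc)-M_0H_0)\nabla\bq+(M(\bc)-M_0)\bz\nabla\phi$ is at least quadratic. For $\|\bq\|_{L^\infty}\le m/2$, which holds by the embedding $H^s\hookrightarrow W^{2,\infty}$ since $s>d/2+2$, the principal part $M(\bc)H(\bc)$ is uniformly parabolic. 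Indeed, its eigenvalues are those of $H^{1/2}MH^{1/2}$, which is positive definite because $M$ and $H$ are both positive definite for strictly positive $\bc$. Standard quasilinear parabolic theory on a smooth domain with no-flux conditions then gives a local strong solution in $C([0,T];H_N^s)\cap L^2(0,T;H_N^{s+1})$. That solution stays strictly positive and conserves both masses, so $\bq$ remains zero mean. Compatibility conditions at the boundary are the usual technical point here; I would handle them by working with the symmetrizer below and the Neumann-adapted norms $H_N^s$.

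\emph{Linear decay in the entropy norm.} The linearized operator is $L\bq=\nabla\cdot(M_0\nabla\boldsymbol\Theta)$ with $\boldsymbol\Theta=H_0\bq+\bz\psi$. On the $k$-th Neumann mode it acts as $-\nu_kM_0A_k$. This operator is self-adjoint in the inner product induced by $\mathcal H_{\rm lin}$, namely $\langle\bq,\bq\rangle_A=\sum_k\bq_k^\top A_k\bq_k$. By Lemma~\ref{lem:linearized-spectral-formula}, each mode satisfies $\frac{d}{dt}\mathcal H_{\rm lin}=-\mathcal D_{\rm lin}\le-\lambda_{\rm lin}\mathcal H_{\rm lin}$. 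The same holds for higher-order weighted energies $\mathcal E_s(\bq):=\frac12\sum_k(1+\nu_k)^s\bq_k^\top A_k\bq_k$, which are equivalent to $\|\bq\|_{H_N^s}^2$ because $H_0\le A_k\le H_0+\nu_1^{-1}\bz\otimes\bz$. For these energies the modal computation gives
\[
\frac{d}{dt}\mathcal E_s=-\mathcal D_s,\qquad \mathcal D_s:=\sum_k(1+\nu_k)^s\nu_k\,\bq_k^\top A_kM_0A_k\bq_k\ \ge\ \lambda_{\rm lin}\mathcal E_s,
\]
and in addition $\mathcal D_s\ge c\|\bq\|_{H_N^{s+1}}^2$.

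\emph{Nonlinear estimate and continuation; main obstacle.} Apply $\mathcal E_s$ to the full equation. The main obstacle is controlling $\langle\nabla\cdot\mathcal N(\bq),\bq\rangle_{\mathcal E_s}$ by $C\|\bq\|_{H_N^s}\|\bq\|_{H_N^{s+1}}^2$ without a loss of derivatives. The top-order term $(M(\bc)H(\bc)-M_0H_0)\nabla\partial^s\bq$ costs one derivative too many unless it is symmetrized. To deal with this I would first attempt to define the high-order energy using the state-dependent weight $H(\bc)$, that is, the quasilinear entropy structure $\partial^s\bq^\top H(\bc)\partial^s\bq$. The principal part then becomes $-\int\nabla(H\partial^s\bq)^\top M(\bc)\nabla(H\partial^s\bq)$ plus commutators. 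Those commutators are estimated by Moser/Kato--Ponce inequalities, using $s>d/2+2$ so that $\nabla\bq,\nabla^2\bq\in L^\infty$. The deviation of this weight from $A_k$ is $O(\|\bq\|_{L^\infty})$. As long as $\|\bq\|_{H_N^s}\le\varepsilon$, this gives
\[
\frac{d}{dt}\widetilde{\mathcal E}_s+(1-C\varepsilon)\mathcal D_s\le0,\qquad \widetilde{\mathcal E}_s=(1+O(\varepsilon))\mathcal E_s.
\]
Choosing $\varepsilon$ small, so that $(1-C\varepsilon)\lambda_{\rm lin}/(1+C\varepsilon)\ge\gamma$, yields $\widetilde{\mathcal E}_s(t)\le e^{-\gamma t}\widetilde{\mathcal E}_s(0)$. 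A standard bootstrap then keeps $\|\bq(t)\|_{H_N^s}\le C\varepsilon_s<\varepsilon$ for all times, which closes the continuation, proves global existence, and preserves positivity. Finally, elliptic regularity for $-\Delta_N\phi=\bz\cdot\bq$ gives $\|\phi\|_{H_N^{s+2}}\le C\|\bq\|_{H_N^s}$, which yields \eqref{eq:small-data-strong-decay}.
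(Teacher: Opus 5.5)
Your linear step matches the paper's: the modal energy $\mathcal E_s$, the bound $\mathcal D_s\ge\lambda_{\rm lin}\mathcal E_s$, and $\mathcal D_s\ge c\|\bq\|_{H_N^{s+1}}^2$. The nonlinear step, where you depart from the paper, does not deliver the sharp rate. Everything rests on $\widetilde{\mathcal E}_s=(1+O(\varepsilon))\mathcal E_s$ together with $\frac{d}{dt}\widetilde{\mathcal E}_s+(1-C\varepsilon)\mathcal D_s\le0$. That is, $\widetilde{\mathcal E}_s$ must reduce to $\mathcal E_s$ at $\bq=0$, and the energy you describe does not.

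\emph{The Poisson part is missing.} $H(\bc)$ is an $O(\|\bq\|_{L^\infty})$ perturbation of $H_0$, not of $A_k=H_0+\nu_k^{-1}\bz\otimes\bz$. The nonlocal Poisson part of the entropy is therefore absent. Without it, the linear part of $\frac{d}{dt}\widetilde{\mathcal E}_s$ contains the indefinite cross term $-\int_\Omega\nabla(H_0\partial^\beta\bq)^\top M_0\bz\,\nabla\partial^\beta\psi\,dx$ instead of $-\mathcal D_s$.

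\emph{Euclidean derivatives are the wrong ones here.} $\partial^\beta$ is not diagonal in the Neumann basis, and $\partial^\beta\bq$ has neither zero mean nor Neumann data. Consequently:
(i) $\sum_{|\beta|\le s}\|\partial^\beta\bq\|_{L^2}^2$ is comparable to $\sum_k(1+\nu_k)^s|\bq_k|^2$ only up to $O(1)$ constants;
(ii) the spectral bound defining $\lambda_{\rm lin}$ does not apply to it;
(iii) integration by parts leaves boundary integrals of traces of $\partial^{s+1}\bq$ that nothing in your scheme controls. The symmetrizer has no bearing on boundary compatibility.
As written, you get at best decay at some rate fixed by equivalence constants, not every $\gamma<\lambda_{\rm lin}$.

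The obstacle the symmetrizer was meant to remove is not actually present, because the problem is parabolic. Moser and composition estimates give $\|\mathcal N(\bq)\|_{H^s}\le C\|\bq\|_{H^s}\|\bq\|_{H^{s+1}}$. Pairing in the $\mathcal E_s$ inner product by $H_N^{s-1}$--$H_N^{s+1}$ duality then gives $|\langle\nabla\cdot\mathcal N(\bq),\bq\rangle_{\mathcal E_s}|\le C\|\bq\|_{H_N^s}\mathcal D_s$. If you keep the constant-coefficient spectral energy $\mathcal E_s$ itself, you get $\frac{d}{dt}\mathcal E_s\le-(1-C\varepsilon)\lambda_{\rm lin}\mathcal E_s$ on the bootstrap interval. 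That repairs your route and yields every $\gamma<\lambda_{\rm lin}$.

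The repaired route still needs $\nabla\cdot\mathcal N(\bq(t))\in H_N^{s-1}$, i.e.\ the local solution must genuinely stay in the Neumann scale $H_N^s$. For $s>7/2$ this includes $\partial_\nu\Delta\bq=0$ on $\partial\Omega$, which is not automatic for a variable-coefficient flow on a curved boundary.

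The paper avoids any nonlinear energy identity. It uses the same modal computation only to obtain $\|e^{t\mathcal L}\bq\|_{H_N^s}^2\le Ce^{-\lambda_{\rm lin}t}\|\bq\|_{H_N^s}^2$. It then checks normal ellipticity, $C^1$ dependence of $\mathcal A(\bq)$, and the quadratic bound $\|\mathcal Q(\bq)\|_{H_N^{s-2}}\le C\|\bq\|_{H_N^s}^2$, and invokes the Pr\"uss--Simonett principle of linearized stability. That returns every norm rate $\omega<\lambda_{\rm lin}/2$, hence $\gamma=2\omega<\lambda_{\rm lin}$, and hands the boundary and regularity questions to the abstract maximal-regularity framework.
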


\begin{proof}
Set
\(
    \bq:=\bc-\bc^\infty.
\)
The componentwise mass constraints imply
\(
    \int_\Omega q_n\,dx
    =
    \int_\Omega q_p\,dx
    =
    0,
\)
and the associated normalized Neumann potential satisfies
\begin{equation}
    -\Delta_N\phi=\bz\cdot\bq,
    \qquad
    \int_\Omega\phi\,dx=0.
\label{eq:strong-poisson}
\end{equation}
Equivalently,
\(
    \phi=\mathcal P_N(\bz\cdot\bq),
\)
where \(\mathcal P_N=(-\Delta_N)^{-1}\) denotes the normalized Neumann
Poisson operator.

For \(\bc\) in a sufficiently small \(H_N^s\)-neighborhood of
\(\bc^\infty\), the embedding
\(
    H_N^s(\Omega)\hookrightarrow W^{2,\infty}(\Omega)
\)
implies
\(
    c_n,c_p\ge \frac m2.
\)
In this neighborhood, the mobility \(M(\bc)\) and the entropy Hessian
\(
    H(\bc)
    :=
    \operatorname{diag}(c_n^{-1},c_p^{-1})+F
\)
depend smoothly on \(\bc\) and are uniformly positive definite. Moreover,
the no-flux boundary condition is equivalent to the componentwise
homogeneous Neumann condition for \(\bq\). Indeed, since \(M(\bc)\) is
invertible and \(\partial_\nu\phi=0\), the condition
\(
    M(\bc)\nabla\bmu\,\nu=0
\)
implies
\(
    0
    =
    \partial_\nu\bmu
    =
    H(\bc)\partial_\nu\bq,
\)
and hence \(\partial_\nu\bq=0\).

Eliminating \(\phi\) through \eqref{eq:strong-poisson}, the system can be
written on the componentwise zero-mean subspace in the quasilinear form
\begin{equation}
    \partial_t\bq+\mathcal A(\bq)\bq=0,
\label{eq:strong-quasilinear-form}
\end{equation}
where, for a vector-valued function \(\bv\),
\begin{equation}
\begin{aligned}
    \mathcal A(\bq)\bv
    :=
    -\nabla\cdot
    \Bigl[
        M(\bc^\infty+\bq)
        \Bigl(
            H(\bc^\infty+\bq)\nabla\bv
            +
            \bz\nabla\mathcal P_N(\bz\cdot\bv)
        \Bigr)
    \Bigr].
\end{aligned}
\label{eq:strong-quasilinear-operator}
\end{equation}
The linearization at \(\bq=0\) is
\[
    \mathcal A_0\bv
    :=
    -\nabla\cdot
    \Bigl[
        M_0
        \nabla
        \Bigl(
            H_0\bv
            +
            \bz\mathcal P_N(\bz\cdot\bv)
        \Bigr)
    \Bigr],
\]
where
\(
    M_0:=M(\bc^\infty),
    \;
    H_0:=m^{-1}I+F.
\)
Equivalently,
\begin{equation}
    \partial_t\bq
    =
    \mathcal L\bq+\mathcal Q(\bq),
    \qquad
    \mathcal L:=-\mathcal A_0,
    \qquad
    \mathcal Q(\bq)
    :=
    -\bigl(\mathcal A(\bq)-\mathcal A_0\bigr)\bq .
\label{eq:strong-abstract-form}
\end{equation}

The principal diffusion matrix \(M(\bc)H(\bc)\) is similar to the symmetric
positive definite matrix
\(
    H(\bc)^{1/2}M(\bc)H(\bc)^{1/2}.
\)
It is therefore uniformly normally elliptic in a sufficiently small
neighborhood of \(\bc^\infty\); hence the system is quasilinear parabolic in
the sense of \cite{amann1995linear}. The nonlocal term involving
\(\mathcal P_N\) is of lower order and does not affect normal ellipticity.

Since \(s>d/2+2\), the space \(H_N^s(\Omega)\) is an algebra and controls the
coefficients in \(W^{2,\infty}(\Omega)\). Standard Moser and composition
estimates, together with the boundedness of the Neumann Poisson map
\(
    \mathcal P_N:
    H_N^{s-2}(\Omega)
    \longrightarrow
    H_N^s(\Omega),
\)
show that
\(
    \bq\longmapsto\mathcal A(\bq)
\)
is \(C^1\) from a neighborhood of zero in \(H_N^s(\Omega)^2\) into
\(
    \mathcal L
    \bigl(
        H_N^s(\Omega)^2,
        H_N^{s-2}(\Omega)^2
    \bigr).
\)
The equivalent remainder in \eqref{eq:strong-abstract-form} satisfies
\(
    \mathcal Q(0)=0,
    \;
    D\mathcal Q(0)=0,
\)
and, for sufficiently small \(\bq,\bq_1,\bq_2\),
\begin{equation}
\begin{aligned}
    \|\mathcal Q(\bq)\|_{H_N^{s-2}}
    &\le
    C\|\bq\|_{H_N^s}^2,\\
    \|\mathcal Q(\bq_1)-\mathcal Q(\bq_2)\|_{H_N^{s-2}}
    &\le
    C
    \bigl(
        \|\bq_1\|_{H_N^s}
        +
        \|\bq_2\|_{H_N^s}
    \bigr)
    \|\bq_1-\bq_2\|_{H_N^s}.
\end{aligned}
\label{eq:strong-quadratic-remainder}
\end{equation}
Normal ellipticity and the Neumann boundary conditions provide the required
maximal-regularity realization and a local \(C^1\) semiflow, together with
the corresponding continuation criterion; see
\cite{amann1995linear,lunardi2012analytic}. The Moser estimates used above
are standard; see \cite{taylor1996partial}.

We next determine the decay rate of the linearized problem. Let
\(\{e_k\}_{k\ge0}\) be an orthonormal Neumann eigenbasis satisfying
\(
    -\Delta_Ne_k=\nu_ke_k,
\)
and write
\(
    \bq=\sum_{k\ge1}\bq_k e_k.
\)
Equation~\eqref{eq:strong-poisson} gives
\(
    \phi
    =
    \sum_{k\ge1}
    \frac{\bz\cdot\bq_k}{\nu_k}e_k.
\)
With
\(
    A_k
    :=
    H_0+\nu_k^{-1}\bz\otimes\bz,
\)
the \(k\)-th mode of the linearized equation satisfies
\(
    \partial_t\bq_k
    =
    -\nu_kM_0A_k\bq_k.
\)

Define the higher-order linearized energy
\begin{equation}
    \mathcal H_{\rm lin}^{(s)}(\bq)
    :=
    \frac12
    \sum_{k\ge1}
    (1+\nu_k)^s
    \bq_k^\top A_k\bq_k.
\label{eq:higher-linearized-entropy}
\end{equation}
Since
\(    H_0
    \le
    A_k
    \le
    H_0+\nu_1^{-1}\bz\otimes\bz,
    \; k\ge1,
\)
the functional \(\mathcal H_{\rm lin}^{(s)}\) is equivalent to
\(\|\bq\|_{H_N^s}^2\).

Along the linearized evolution,
\[
    \frac{d}{dt}\mathcal H_{\rm lin}^{(s)}(\bq)
    =
    -
    \sum_{k\ge1}
    (1+\nu_k)^s\nu_k
    \bq_k^\top A_kM_0A_k\bq_k.
\]
For every \(k\ge1\), the spectral formula
\eqref{eq:lambda-lin-spectral-formula} gives
\[
\begin{aligned}
    \nu_k\bq_k^\top A_kM_0A_k\bq_k
    \ge
    \nu_k\lambda_{\min}(M_0A_k)
    \bq_k^\top A_k\bq_k\ge
    \frac{\lambda_{\rm lin}}2
    \bq_k^\top A_k\bq_k.
\end{aligned}
\]
Consequently,
\begin{equation}
    \frac{d}{dt}\mathcal H_{\rm lin}^{(s)}(\bq)
    \le
    -\lambda_{\rm lin}
    \mathcal H_{\rm lin}^{(s)}(\bq).
\label{eq:higher-linearized-decay}
\end{equation}
The norm equivalence following
\eqref{eq:higher-linearized-entropy} therefore yields
\begin{equation}
    \|e^{t\mathcal L}\bq\|_{H_N^s}^2
    \le
    C e^{-\lambda_{\rm lin}t}
    \|\bq\|_{H_N^s}^2.
\label{eq:strong-linear-semigroup-decay}
\end{equation}
Thus, the linearized flow has decay rate
\(\lambda_{\rm lin}\) at the squared-norm level, or equivalently
\(\lambda_{\rm lin}/2\) at the norm level.

The componentwise mass constraints restrict the evolution to the zero-mean
subspace and remove the two constant mass directions. Hence
\(\bq=0\) is an isolated equilibrium on this subspace, and
\eqref{eq:strong-linear-semigroup-decay} implies
\(
    0\notin\sigma(\mathcal L).
\)
The quasilinear formulation
\eqref{eq:strong-quasilinear-form}, the \(C^1\)-dependence and normal
ellipticity of \(\mathcal A(\bq)\), and the spectral estimate
\eqref{eq:strong-linear-semigroup-decay} therefore verify the hypotheses of
the classical principle of linearized stability, corresponding to the
isolated-equilibrium case of
\cite[Theorem~5.3.1 and Remark~5.3.2(a)]{pruss2016moving}.

For every
\(
    0<\omega<\lambda_{\rm lin}/2,
\)
this principle gives \(\varepsilon_s>0\) such that
\[
    \|\bq(0)\|_{H_N^s}
    \le
    \varepsilon_s
\]
implies global existence and
\[
    \|\bq(t)\|_{H_N^s}
    \le
    C_s e^{-\omega t}
    \|\bq(0)\|_{H_N^s}.
\]
Taking \(\gamma=2\omega\), we obtain, for every
\(0<\gamma<\lambda_{\rm lin}\),
\begin{equation}
    \|\bq(t)\|_{H_N^s}^2
    \le
    C_s e^{-\gamma t}
    \|\bq(0)\|_{H_N^s}^2.
\label{eq:strong-concentration-decay}
\end{equation}
The resulting solution remains in the prescribed neighborhood of
\(\bc^\infty\). Choosing this neighborhood so that
\(
    \|\bq\|_{L^\infty}<m/2
\)
ensures
\(
    c_n,c_p\ge\frac m2
\)
for all \(t\ge0\). The continuation criterion therefore prevents exit from
the normally elliptic regime.

Finally, the Neumann elliptic estimate applied to
\eqref{eq:strong-poisson} gives
\[
    \|\phi(t)\|_{H_N^{s+2}}
    \le
    C\|\bq(t)\|_{H_N^s}.
\]
Combining this estimate with
\eqref{eq:strong-concentration-decay} proves
\eqref{eq:small-data-strong-decay}.
\end{proof}

\section{Conclusion}
\label{sec:conclusion}

The analysis in this paper is based on the observation that interspecies drag fundamentally alters the metric structure of the steric Poisson--Nernst--Planck system, rather than merely adding a perturbative transport term. In the energetic variational formulation, steric effects are encoded in the free energy, while drag enters through the dissipation and produces a concentration-dependent, non-diagonal Onsager mobility. This leads to dissipation modes that are not equivalent to the individual entropy gradients, and the finite-energy weak formulation has to be built at that level. The existence theory, the sublevel entropy-entropy production inequality, the sharp linearized rate, and its nonlinear realization through small-data strong stability can therefore be viewed as different uses of the same underlying energy-dissipation structure. 

The remaining difficulties are also tied to this structure. For general finite-energy weak solutions, it remains open whether the intrinsic finite-energy dissipation controls the lower-semicontinuous extension of the entropy production required by the sublevel inequality. The rank-one steric case shows a related obstruction: when the free energy controls only part of the density vector, the resulting lack of compactness occurs precisely in the modes required to identify the ionic fluxes. These issues suggest that future extensions to partially degenerate steric interactions or more general multicomponent drag laws will require a sharper understanding of the dissipation modes themselves.

\section*{Acknowledgments} 
The authors would like to thank Prof. Chia-Yu Hsieh (Sun Yat-sen University) for helpful discussions and guidance on analytical approaches to Poisson--Nernst--Planck systems with drags. They also thank Prof. Yiwei Wang (University of California, Riverside) for valuable suggestions on the modeling aspects of the work.  Baoli Hao and Chun Liu are partially supported by NSF DMS-2410742 and DMS-2118181.

\end{document}